\crefname{equation}{}{}
\crefname{lemma}{Lemma}{Lemmas}
\crefname{page}{p.}{pp.}
\numberwithin{equation}{section}
\theoremstyle{plain}
\newtheorem{theorem}{Theorem}[section]
\newtheorem{lemma}{Lemma}[section]
\newtheorem{corollary}{Corollary}[section]
\newtheorem{remark}{Remark}[section]
\theoremstyle{definition}
\def\now{%
\minute=\time%
\hour=\time \divide \hour by 60%
\hourMins=\hour \multiply\hourMins by 60%
\advance\minute by -\hourMins%
\zeroPadTwo{\the\hour}:\zeroPadTwo{\the\minute}%
}
\def\zeroPadTwo#1{\ifnum #1<10 0\fi#1}
\renewcommand{\cite}{\citet}
\def\^#1{\ifmmode {\mathaccent"705E #1} \else {\accent94 #1} \fi}
\def\~#1{\ifmmode {\mathaccent"707E #1} \else {\accent"7E #1} \fi}
\def\*#1{#1^\ast}
\edef\-#1{\noexpand\ifmmode {\noexpand\bar{#1}} \noexpand\else \-#1\noexpand\fi}
\def\>#1{\vec{#1}}
\def\.#1{\dot{#1}}
\def\atop{\@@atop}
\def\*#1{\mathscr{#1}}
\renewcommand{\leq}{\leqslant}
\renewcommand{\geq}{\geqslant}
\newcommand{\eps}{\varepsilon}
\newcommand{\ep}{\epsilon}
\renewcommand{\eps}{\varepsilon}
\renewcommand{\le}{\leq}
\newcommand{\IE}{\mathbbm{E}}
\newcommand{\Var}{\mathop{\mathrm{Var}}\nolimits}
\newcommand{\Cov}{\mathop{\mathrm{Cov}}}
\newcommand{\IR}{\mathbb{R}}
\def\be#1{\begin{equation*}#1\end{equation*}}
\def\ben#1{\begin{equation}#1\end{equation}}
\def\bes#1{\begin{equation*}\begin{split}#1\end{split}\end{equation*}}
\def\besn#1{\begin{equation}\begin{split}#1\end{split}\end{equation}}
\def\ba#1{\begin{align*}#1\end{align*}}
\def\ban#1{\begin{align}#1\end{align}}
\def\beqn#1\eeqn{\begin{align}#1\end{align}}
\def\beq#1\eeq{\begin{align*}#1\end{align*}}
\def\E{{\IE}}
\newcommand{\red}{\textcolor{red}}
\renewcommand\section{\@startsection {section}{1}{\z@}%
{-3.5ex \@plus -1ex \@minus -.2ex}%
{1.3ex \@plus.2ex}%
{\center\small\sc\mathversion{bold}}}
\def\subsection#1{\@startsection {subsection}{2}{0pt}%
{-3.5ex \@plus -1ex \@minus -.2ex}%
{1ex \@plus.2ex}%
{\bf\mathversion{bold}}{#1}}
\def\subsubsection#1{\@startsection{subsubsection}{3}{0pt}%
{\medskipamount}%
{-10pt}%
{\normalsize\itshape}{\kern-2.2ex. #1.}}
\def\blfootnote{\xdef\@thefnmark{}\@footnotetext}
\begin{document}

\title{Central limit theorem for high temperature spin models via martingale embedding}
\author{Xiao Fang, Yang Xie, Yi-Kun Zhao}
\date{\it The Chinese University of Hong Kong} 
\maketitle

\noindent{\bf Abstract:} 
We use martingale embeddings to prove a central limit theorem (CLT) for one-dimensional projections of high-dimensional random vectors in $\{-1,1\}^n$ satisfying a Poincaré inequality. 
We obtain a non-asymptotic error bound involving two-point and three-point functions for the CLT in 2-Wasserstein distance. We present three illustrative applications: Ising model with finite-range interactions, ferromagnetic Ising model under the Dobrushin condition, and the Sherrington--Kirkpatrick spin glass model at sufficiently high temperature. In all the examples, we allow heterogeneous external fields.

\medskip

\noindent{\bf AMS 2020 subject classification:} 
60F05, 60J27

\noindent{\bf Keywords and phrases:}  
Central limit theorem, correlation decay, Dobrushin's condition, Ising model, martingale embedding, Poincar\'e inequality, SK model




\section{Introduction and main result}

Let $X=(X_1,\dots, X_n)^\top$ be a random vector taking values in the $n$-dimensional hypercube $\{-1,1\}^n$. As a prototypical example, we assume that it follows the probability distribution (generalizations beyond quadratic interactions are discussed in \cref{rem:1.1,rem:beyongquadra})
\ben{\label{eq:Isingmodel}
\mu(x):=P(X=x)\propto \exp\left(\frac{1}{2} x^\top A x +h^\top x  \right),\quad x\in \{-1,1\}^n,
}
where $\propto$ means ``is proportional to", $A=(A_{ij})_{1\leq i,j\leq n}$ is a symmetric $n\times n$ matrix (called the \emph{interaction matrix}), and $h\in \IR^n$ is the \emph{external field}. 
The model is \emph{ferromagnetic} if $A_{ij}\geq 0$ for $i\ne j$, but we do not require it in the main theorem.
Note that changing the diagonal entries of $A$ does not change the model \cref{eq:Isingmodel}.
Therefore, by considering $A+a I_n$, where $a\in \IR$ and $I_n$ is the $n\times n$ identity matrix, we assume $A$ is positive semidefinite, i.e., $0\preceq A$ without loss of generality. 
We mostly consider fixed $A$ and $h$. If $A$ is a random matrix as in the Sherrington--Kirkpatrick (SK) spin glass model, our result will be a large probability statement with respect to the randomness of $A$. 

\cite{eldan2022spectral} 
proved that if the operator norm $\|A\|_{\text{op}}$ is less than 1 (a \emph{high-temperature} condition), i.e., $0\preceq A\prec I_n$, then $X$ in \cref{eq:Isingmodel} satisfies the Poincar\'e inequality 
\ben{\label{eq:Poincare}
\Var_\mu(f(X))\leq \frac{1}{1-\|A\|_{\text{op}}} \mathcal{E}_\mu (f, f):= \frac{1}{1-\|A\|_{\text{op}}} \E_\mu \sum_{i=1}^n (\E_\mu[f(X)\vert X_{\sim i}]- f(X))^2,
}
where $X_{\sim i}$ denotes the collection $\{X_j\}_{j\ne i}$ and $\mathcal{E}_\mu (f, f)$ is the \emph{Dirichlet form} corresponding to the continuous time Glauber dynamics (the paragraph above \cref{eq:L2ergo} gives a description of the dynamics). See \cite{bauerschmidt2019very} for an earlier result where the squared term on the right-hand side of \cref{eq:Poincare} is replaced by its upper bound
\be{
(f(X^{\{i,+\}})- f(X^{\{i,-\}}))^2,
}
where $X^{\{i,+\}}$ ($X^{\{i,-\}}$ resp.) has the $i$th coordinate equal to $+1$ ($-1$ resp.) and other coordinates equal to those of $X$.

We are interested in the central limit theorem (CLT) for the total magnetization $\sum_{i=1}^n X_i$. The existing approaches for establishing the CLT include: (a) the \emph{blocking} argument by \cite{newman1980} (see also \cite{goldstein2018}) for positively associated $X_i$'s (the ferromagnetic case); (b) the asymptotic validity of Wick’s law at the level of the four-point function (\cite{newman1975inequalities}, \cite{aizenman1982geometric}, \cite{aizenman2021marginal});
(c) the spatial mixing approach (\cite{kunsch1982decay});
and (d) the exchangeable pair approach in Stein's method (\cite{stein1986approximate}), which works for mean-field Ising models such as the Curie--Weiss model (\cite{chen2013stein}, \cite{deb2023fluctuations}, \cite{lee2025fluctuations}). 
These existing results either assume a spatial structure of the Ising model or a special \emph{linearity condition} in applying Stein's method.

In this paper, we use the martingale embedding method by \cite{eldan2020clt} to prove a CLT for the total magnetization of the Ising model \cref{eq:Isingmodel}. The method will be explained at the beginning of \cref{sec:proofmain}.

Recall the $p$-Wasserstein distance, $p\geq 1$, between two probability distributions $\mu$ and $\nu$ on $\IR$ is defined as
\ben{\label{eq:defpWass}
\mathcal{W}_p(\mu, \nu):= \inf_{X\sim \mu, Y\sim \nu} (\E|X-Y|^p)^{1/p},
}
where the infimum is taken over all couplings of $(X, Y)$ with the corresponding marginal distributions $\mu$ and $\nu$. 
Our first result is as follows:

\begin{theorem}\label{thm:1} 
Let $X$ follow the Ising model \cref{eq:Isingmodel} with $0\preceq A\prec I_n$. Let $\theta=(\theta_1,\dots, \theta_n)^\top\in \IR^n$ be a unit vector, that is, $|\theta|=1$. Let
\ben{\label{eq:Wmusigma}
W_n:=\theta^\top X,\quad \mu_n:=\E(W_n),\quad \sigma_n^2:=\Var(W_n).
}
Then, we have
\besn{\label{eq:thm1}
&\mathcal{W}_2 (\mathcal{L}(W_n), N(\mu_n, \sigma_n^2))\leq 14\left\{\frac{1}{1-\|A\|_{\text{op}}} \sup_{h\in \IR^n} \sum_{k=1}^n \bigg[\sum_{i=1}^n \Big(\sum_{j=1}^n \theta_j B_{ij}^{(h)}\Big)\Big(\sum_{l=1}^n \theta_l B_{ilk}^{(h)}\Big) \bigg]^2 \right\}^{\frac{1}{14}}
}
provided that the right-hand side is sufficiently small, where
\besn{\label{eq:Bilk}
&B_{ij}^{(h)}:=\Cov(X_i, X_j),\\
&B_{ilk}^{(h)}:= \E (X_i-\E X_i)(X_l-\E X_l)(X_k-\E X_k),
}
$N(\mu_n, \sigma_n^2)$ denotes the normal distribution with mean $\mu_n$ and variance $\sigma_n^2$,
the supremum in \cref{eq:thm1} is over all possible external fields $h\in \IR^n$,
and the expectations and covariances in \cref{eq:Bilk} are computed under the model \cref{eq:Isingmodel} with the external field $h$.

\end{theorem}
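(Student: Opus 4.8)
The plan is to realize $W_n$ as the terminal value of a continuous martingale via the stochastic localization process --- this is the martingale-embedding scheme of \cite{eldan2020clt} --- and then to control the fluctuations of its quadratic variation. \emph{Embedding, and why the supremum over $h$ appears.} Let $B$ be a standard $n$-dimensional Brownian motion, $X\sim\mu$, put $Y_t:=tX+B_t$, and let $\mu_t:=\mathcal L(X\mid Y_{[0,t]})$. Since $|x|^2\equiv n$ on $\{-1,1\}^n$, the Bayes tilt $\exp(\langle Y_t,x\rangle-t|x|^2/2)$ is proportional to $\exp(\langle Y_t,x\rangle)$, so $\mu_t$ is again an Ising model \cref{eq:Isingmodel} with the \emph{same} interaction matrix $A$ and external field $h+Y_t$; this is exactly why the error term in \cref{eq:thm1} ranges over all external fields. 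In particular each $\mu_t$ satisfies \cref{eq:Poincare} with the same constant $1/(1-\|A\|_{\mathrm{op}})$, and testing \cref{eq:Poincare} on linear functions --- for which $\mathcal E_{\mu_t}(v^\top X,v^\top X)=\sum_i v_i^2\,\E_{\mu_t}\Var_{\mu_t}(X_i\mid X_{\sim i})\le|v|^2$ --- gives the uniform operator-norm bound $\|\Sigma_t\|_{\mathrm{op}}\le1/(1-\|A\|_{\mathrm{op}})$, where $\Sigma_t:=\Cov_{\mu_t}(X)$. Writing $a_t:=\E_{\mu_t}X$, the nonlinear-filtering equations give $\mathrm{d}a_t=\Sigma_t\,\mathrm{d}\widehat B_t$ for a Brownian motion $\widehat B$ in the observation filtration, so $M_t:=\theta^\top a_t$ is a bounded martingale with $M_0=\mu_n$ and $M_t\to\theta^\top X=W_n$ a.s.\ (the posterior $\mu_t$ concentrates at $X$; one may also invoke $L^2$-ergodicity of the Glauber dynamics). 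Thus $\mathrm{d}M_t=(\Sigma_t\theta)^\top\mathrm{d}\widehat B_t$, $\langle M\rangle_\infty=\int_0^\infty|\Sigma_t\theta|^2\,\mathrm{d}t$ with $\E\langle M\rangle_\infty=\sigma_n^2$, and a deterministic time change of $[0,\infty)$ onto $[0,1)$ makes this a martingale embedding of $\mathcal L(W_n)$.

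\emph{Reduction to the quadratic-variation density.} Coupling $M$ with a Gaussian martingale driven by $\widehat B$ whose variance profile equals, in expectation, that of $M$, the estimate of \cite{eldan2020clt} controls $\mathcal W_2^2(\mathcal L(W_n),N(\mu_n,\sigma_n^2))$ by the time-integrated $L^2$-discrepancy between the quadratic-variation density $\gamma_t^2:=|\Sigma_t\theta|^2$ of $M$ and the constant density of the target Gaussian, plus the difference of the two variances. Since $\tfrac{\mathrm{d}}{\mathrm{d}t}\E[\theta^\top\Sigma_t\theta]=-\E[\theta^\top\Sigma_t^2\theta]\le-(\E[\theta^\top\Sigma_t\theta])^2$, one gets $\E\int_T^\infty\gamma_t^2\,\mathrm{d}t=\E[\theta^\top\Sigma_T\theta]\le1/T$, so truncating the localization at a time $T\asymp\eps^{-O(1)}$ costs $O(\sqrt\eps)$ in $\mathcal W_2$ (this is the term $5\sqrt\eps$); the Cauchy--Schwarz steps that turn $\mathcal W_2^2$ into a second-moment bound for $\gamma_t^2$ on $[0,T]$, together with the factor $1/\E\gamma_t^2$ occurring there, produce the polynomial loss $\eps^{-6}$, while the uniform bound $\|\Sigma_t\|_{\mathrm{op}}\le1/(1-\|A\|_{\mathrm{op}})$ is used throughout to keep every auxiliary estimate free of the dimension $n$.

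\emph{Where the three-point correlations enter.} Apply Itô's formula to $t\mapsto\Sigma_t$: each $\E_{\mu_t}[X_iX_j]=\E[X_iX_j\mid Y_{[0,t]}]$ is a martingale with differential $\Cov_{\mu_t}(X_iX_j,X)^\top\mathrm{d}\widehat B_t$, and a short computation gives that $\Sigma_t$ has drift $-\Sigma_t^2$ and martingale part $N_t$ with $(\mathrm{d}N_t)_{ij}=\sum_k\E_{\mu_t}[\bar X_i\bar X_j\bar X_k]\,\mathrm{d}\widehat B_{t,k}$, where $\bar X:=X-a_t$. Hence the martingale component of $\gamma_t^2=\theta^\top\Sigma_t^2\theta$ has differential $2\,\theta^\top\Sigma_t(\mathrm{d}N_t)\theta$, and its instantaneous quadratic-variation rate equals
\[
4\sum_{k=1}^{n}\Bigl(\sum_{i=1}^{n}(\Sigma_t\theta)_i\sum_{j=1}^{n}\theta_j\,\E_{\mu_t}[\bar X_i\bar X_j\bar X_k]\Bigr)^{2}.
\]
Identifying $(\Sigma_t\theta)_i=\sum_j\theta_j B_{ij}^{(h+Y_t)}$ and $\sum_j\theta_j\E_{\mu_t}[\bar X_i\bar X_j\bar X_k]=\sum_l\theta_l B_{ilk}^{(h+Y_t)}$ (covariances of the tilted Ising measure), this rate is exactly four times the bracketed quantity in \cref{eq:thm1} evaluated at the shifted field $h+Y_t$; in particular the constant $4$ in \cref{eq:thm1} is the square of the factor $2$ just above. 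Bounding the rate by its supremum over all external fields, inserting the Itô isometry, and combining with the reduction of the previous paragraph yields \cref{eq:thm1}.

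The main obstacle is the reduction step: making the martingale-embedding inequality genuinely quantitative --- i.e.\ choosing the time change and the comparison Gaussian so that the truncation error is $O(\sqrt\eps)$ while the remainder is \emph{exactly} of the three-point type in \cref{eq:Bilk} with an explicit power of $\eps$ --- and, hand in hand with it, using \cref{eq:Poincare} so that no intermediate quantity depends on $n$. The embedding and the Itô computation are routine once the tilted measures are recognized as Ising models; the only delicate points there are the almost sure convergence $M_\infty=W_n$ and the integrability of the third-moment tensors, both of which follow from the uniform Poincaré bound.
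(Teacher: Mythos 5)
Your embedding, the recognition that the tilted measures are again Ising models with a shifted external field (hence the supremum over $h$), and the truncation of small $\E\gamma_t^2$ and of large times all match the paper's Steps 1, 2 and 4 (cf.\ \cref{eq:couple}, \cref{eq:deduction}); the parametrization on $[0,\infty)$ versus the F\"ollmer process on $[0,1]$ is immaterial. The genuine gap is in the step where the three-point correlations enter. The Wasserstein estimate you are invoking requires, at each time, a bound on $\Var(\gamma_t^2)$ (this is the quantity appearing in \cref{eq:W2}), whereas what you actually bound is the \emph{instantaneous} quadratic-variation rate of the martingale part of $\gamma_t^2=\theta^\top\Sigma_t^2\theta$. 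To pass from one to the other you must control $\gamma_t^2-\E\gamma_t^2=\int_0^t(\text{martingale increments})+\int_0^t(\text{drift}-\E\,\text{drift})\,ds$: the It\^o isometry handles only the first integral, and only after integrating your rate over the whole interval $[0,t]$ (which introduces a factor of the time horizon and changes the powers of $\eps$), while the second integral involves fluctuations of $\theta^\top\Sigma_s^3\theta$ and of the It\^o-correction terms, which your supremum-over-fields bound says nothing about. As written, the constants $5\sqrt{\eps}$ and $4/\eps^6$ are asserted rather than derived (in the paper the $4$ arises as $20C_p/5$ from \cref{eq:afterpoincare} and the integral $\int_0^{1-\eps}(1-t)^{-6}dt\leq 1/(5\eps^5)$, not as the square of your factor $2$), and you yourself flag the quantitative reduction as the main obstacle.

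The paper avoids the time-evolution analysis of $\Sigma_t$ altogether: at each fixed $t$ it bounds $\Var\bigl(|\theta^\top\Cov(Y_1\vert Y_t)|^2\bigr)$ by applying the Ising Poincar\'e inequality \cref{eq:Poincare} together with the Gaussian Poincar\'e inequality to the convolution structure $Y_t\overset{d}{=}tY_1+\sqrt{t(1-t)}Z$ (see \cref{eq:afterpoincare}), and the three-point covariance enters through the spatial derivative $\partial_k\Cov(Y_{1i},Y_{1l}\vert Y_t=y_t)$ computed in \cref{lem:partial} --- the same algebraic object as your diffusion coefficient, but obtained as a one-shot, fixed-time variance bound with no drift fluctuations or time accumulation to control, which is what yields \cref{eq:step41} and the stated constants. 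To salvage your route you would need to carry out the full Eldan--Mikulincer--Zhai-type analysis of the SDE for $\Sigma_t$ (drift fluctuations included) and re-derive the $\eps$-powers, or else switch to the fixed-time Poincar\'e argument.
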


\begin{remark}[Poincar\'e suffices for the result]\label{rem:1.1}
It can be seen from the proof in \cref{sec:proofmain} that the bound \cref{eq:thm1} is valid for any random vector $X\in \{-1, 1\}^n$ satisfying the Poincar\'e inequality
\ben{\label{eq:rem11}
\Var(f(X))\leq C_p \sum_{i=1}^n\E (f(X^{\{i,+\}})- f(X^{\{i,-\}}))^2,
}
except to change $1/(1-\|A\|_{\text{op}})$ in \cref{eq:thm1} to $C_p\vee 1$.
\end{remark}


We view \cref{eq:thm1} as a covariance-type bound such as those in \cite{newman1980} and \cite{goldstein2018} for the CLT for positively associated random variables and in \cite{barbour1992poisson} for the Poisson approximation under positive or negative association. 
We expect the bound to be small if $\|\theta\|_\infty$ is small and if each spin $X_i$ has a bounded effect on the other spins. 


We provide three illustrative examples in the following: one with finite-range interactions, one in the ferromagnetic case under the Dobrushin condition, and the SK model.

In the following results, we consider a sequence of models indexed by $n$, $n\geq 1$, and the interaction matrix $A=A_n$ and the external field $h=h_n$ in \cref{eq:Isingmodel} depend on $n$. We obtain asymptotic results as $n\to \infty$. Although we may also obtain convergence rates from \cref{eq:thm1}, they are unlikely to be optimal. Hence, we consider only limit theorems.


\subsection{Application 1: Ising model with finite-range interactions}

In the first example, 
let $\Lambda=\Lambda_n$ be a subset of the $d$-dimensional lattice $\mathbb{Z}^d$ of size $|\Lambda|=n$.
Let $\{X_i\}_{i\in \Lambda}$ follow the Ising model \cref{eq:Isingmodel}. Suppose further that the interactions between spins are finite range, that is, $A_{ij}= 0$ if $d(i,j)>r$, where $r$ is a positive integer and $d(i,j):=\max_{k=1,\dots, d} |i_k-j_k|$ is the maximum coordinate-wise distance for $i,j\in \mathbb{Z}^d$.

\begin{corollary}\label{thm:2}
Under the above setting, suppose that the dimension $d$ and range of interaction $r$ are fixed, the interaction matrix $A=A_n$ is positive semidefinite and $\inf_{n\geq 1} (1-\|A_n\|_{op})>0$.
Let 
\be{
W_n=\frac{1}{\sqrt{n}}\sum_{i\in \Lambda}X_i,\quad \mu_n:=\E(W_n), \quad \sigma_n^2:=\Var(W_n).
}
Then,
as $n\to \infty$,
\ben{\label{eq:cor2}
\mathcal{W}_2 (\mathcal{L}(W_n), N(\mu_n,\sigma_n^2)\to 0.
}
If, in addition, $\|h\|_\infty$ is bounded by a universal constant, then
\ben{\label{eq:cor22}
\frac{W_n-\mu_n}{\sigma_n}\to N(0,1)\quad \text{in distribution.}
}
\end{corollary}

The proof of \cref{eq:cor2} follows from \cref{thm:1} and standard arguments for the exponential decay of correlations of fast-mixing Ising models with short-range interactions (\cite{martinelli1999lectures}). For the sake of completeness, we give the proof of \cref{eq:cor2} in \cref{sec:3}. 

To obtain \cref{eq:cor22} from \cref{eq:cor2}, we need $\sigma_n^2$ to be bounded away from 0. 
From the inequality $\Var(X)\geq \E[\Var(X|Y)]$,
the above fact can be argued by selecting order $n$ spins that are conditionally independent given the other spin values and using the condition that the supremum norm of the external field $\|h\|_\infty$ 
is bounded by a universal constant. 

\subsection{Application 2: Ferromagnetic Ising model under the Dobrushin condition}

In the second example, we consider a ferromagnetic Ising model under the Dobrushin condition (\cite{dobrushin1970prescribing}).
Define
\ben{\label{eq:alpha}
\alpha_n:=\sup_{1\leq i\leq n} \sum_{j=1\atop j\ne i}^n|A_{ij}|.
}

\begin{corollary}\label{thm:3}
In the model \cref{eq:Isingmodel}, assume that all the off-diagonal entries of the interaction matrix are nonnegative and the supremum of the Dobrushin constants satisfies $\sup_{n\geq 1}\alpha_n\leq \alpha<1$.
Let 
\be{
W_n=\frac{1}{\sqrt{n}}\sum_{i=1}^n X_i,\quad \mu_n:=\E(W_n), \quad \sigma_n^2:=\Var(W_n).
}
Then, as $n\to \infty$,
\ben{\label{eq:cor3}
\mathcal{W}_2 (\mathcal{L}(W_n), N(\mu_n,\sigma_n^2)\to 0.
}
If, in addition, $\|h\|_\infty$ is bounded by a universal constant, then
\ben{\label{eq:cor32}
\frac{W_n-\mu_n}{\sigma_n}\to N(0,1)\quad \text{in distribution.}
}
\end{corollary}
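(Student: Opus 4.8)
The plan is to apply \cref{thm:1} with $\theta = n^{-1/2}(1,\dots,1)^\top$ and to show that the second term under the square root in \cref{eq:thm1} tends to $0$ for a suitable (vanishing) choice of $\eps = \eps_n$; then optimizing over $\eps_n$ yields \cref{eq:cor3}. The key input is that under the ferromagnetic Dobrushin condition $\sup_n \alpha_n \leq \alpha < 1$, correlations decay geometrically in the graph distance associated with $A$, \emph{uniformly in the external field $h$}. Concretely, I would first establish that there are constants $C<\infty$ and $\rho\in(0,1)$, depending only on $\alpha$, such that for every $h\in\IR^n$ and all $i,j$,
\be{
|B_{ij}^{(h)}| \leq C\rho^{d_A(i,j)},
}
where $d_A$ is the path metric on the graph with edge set $\{(i,j): A_{ij}\neq 0\}$. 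The standard route is the Dobrushin comparison theorem / uniqueness regime: the Dobrushin interdependence matrix for this Glauber chain is dominated entrywise by a matrix of $\ell^1$-norm $\leq \alpha < 1$, so its powers decay geometrically, which controls the influence of site $j$ on site $i$ and hence $|B_{ij}^{(h)}|$. Ferromagneticity is used (via Griffiths/GKS or simply to ensure the sign structure) to keep everything uniform; alternatively one cites \cref{eq:Poincare}-type bounds together with coupling. The three-point covariances $B_{ilk}^{(h)}$ are bounded crudely by $|B_{ilk}^{(h)}| \leq 2\min\{|B_{il}^{(h)}|, |B_{ik}^{(h)}|, |B_{lk}^{(h)}|\} \leq 2C\rho^{\max(d_A(i,l),d_A(i,k))/1}$, or more simply $|B_{ilk}^{(h)}|\leq 2$ combined with a decay factor from one of the pairs; a convenient bound is $|B_{ilk}^{(h)}| \leq 2C\rho^{d_A(i,l)}\wedge 2C\rho^{d_A(i,k)}$.

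Next I would bound the sum in \cref{eq:thm1}. Writing $S_i := \sum_j \theta_j B_{ij}^{(h)} = n^{-1/2}\sum_j B_{ij}^{(h)}$ and $T_{ik} := \sum_l \theta_l B_{ilk}^{(h)} = n^{-1/2}\sum_l B_{ilk}^{(h)}$, geometric decay gives $|S_i| \leq n^{-1/2} C\sum_j \rho^{d_A(i,j)} =: n^{-1/2} C' $ and similarly $\sum_l |B_{ilk}^{(h)}| \leq C''\rho^{d_A(i,k)/2}$ (splitting the decay in $\min(d_A(i,l),d_A(i,k))$), hence $|T_{ik}| \leq n^{-1/2}C''\rho^{d_A(i,k)/2}$. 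Here I use that the graph has bounded-ish growth — but note that in this corollary there is \emph{no} geometric embedding assumed, only $\alpha_n\leq\alpha$; the bound $\sum_j \rho^{d_A(i,j)}$ may not be uniformly bounded. The fix is to avoid the path metric entirely and work directly with the influence/Dobrushin matrix $D$ (entries $D_{ij}$, $\|D\|_{1\to 1}\leq\alpha$): one has $|B^{(h)}| \preceq C_0 (I-D)^{-1}$ entrywise (for some $C_0$), and then $\sum_i |S_i| |T_{ik}|$ becomes a bilinear expression in $(I-D)^{-1}$ that is controlled using $\|(I-D)^{-1}\|_{1\to1}\leq (1-\alpha)^{-1}$ and $\|(I-D)^{-1}\|_{\infty\to\infty}\leq(1-\alpha)^{-1}$ (the latter since $D$ is also dominated in $\ell^\infty$ by symmetry of the underlying structure, or by transposing). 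This yields
\be{
\sum_{k=1}^n \Big\{\sum_{i=1}^n S_i T_{ik}\Big\}^2 \leq \frac{\text{const}(\alpha)}{n},
}
and therefore the second term in \cref{eq:thm1} is at most $\text{const}(\alpha)\,\eps^{-6} n^{-1/2}$. Choosing $\eps = \eps_n = n^{-1/14}$ makes both terms $O(n^{-1/28})\to 0$, proving \cref{eq:cor3}.

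For the second assertion \cref{eq:cor32}, I need $\sigma_n^2$ bounded away from $0$ so that $\mathcal{W}_2$-convergence of $\law(W_n)$ to $N(\mu_n,\sigma_n^2)$ upgrades to convergence of $(W_n-\mu_n)/\sigma_n$ to $N(0,1)$. This is where the assumption $\|h\|_\infty \leq \text{const}$ enters: by the argument sketched after \cref{thm:2}, select a subset $I\subseteq\{1,\dots,n\}$ of size $\gtrsim n$ whose spins are conditionally independent given $\{X_j: j\notin I\}$ (possible because, e.g., a greedy/independent-set construction works when $\alpha<1$ bounds the degree-weighted interactions; or use the variance decomposition $\Var(W_n)\geq \E\,\Var(W_n\mid X_{I^c})$). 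Conditioned on $X_{I^c}$, $\sum_{i\in I}X_i$ is a sum of independent bounded random variables each with conditional variance bounded below by a positive constant (here $\|h\|_\infty$ bounded ensures the conditional mean of each $X_i$ stays away from $\pm1$), so $\Var(W_n\mid X_{I^c}) \geq c|I|/n \geq c'>0$. Then $\mathcal{W}_2(\law((W_n-\mu_n)/\sigma_n), N(0,1)) \leq \sigma_n^{-1}\mathcal{W}_2(\law(W_n),N(\mu_n,\sigma_n^2)) \to 0$, giving \cref{eq:cor32}.

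\textbf{Main obstacle.} The delicate point is obtaining the entrywise domination $|B^{(h)}| \preceq C_0(I-D)^{-1}$ \emph{uniformly over all external fields $h$} and converting the abstract Dobrushin bound into a usable estimate on $\sum_k(\sum_i S_iT_{ik})^2$ without a lattice structure — i.e., handling the general Dobrushin regime rather than the finite-range lattice of \cref{thm:2}. The three-point function estimate is the technically heaviest sub-step: one must argue that $B_{ilk}^{(h)}$ inherits decay from the \emph{nearest} pair among $\{i,l\},\{i,k\},\{l,k\}$, which requires either a second-order Dobrushin-type comparison or differentiating the correlation bound in $h$ (since $\partial B_{ij}^{(h)}/\partial h_k$ relates to $B_{ijk}^{(h)}$), and keeping the constants uniform there is where ferromagneticity and $\alpha<1$ are both genuinely needed.
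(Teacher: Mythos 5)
Your outline (apply \cref{thm:1} with $\theta=n^{-1/2}(1,\dots,1)^\top$, show the supremum-over-$h$ sum in \cref{eq:thm1} is $O(1/n)$, let $\eps_n\to0$ slowly, and lower-bound $\sigma_n^2$ using bounded $\|h\|_\infty$ plus positive association) matches the paper's strategy, but the decisive step --- controlling the three-point covariances $B_{ilk}^{(h)}$ uniformly in $h$ --- is exactly where your proposal has a genuine gap, which you yourself flag as the ``main obstacle'' rather than resolve. The pointwise bound $|B_{ilk}^{(h)}|\leq 2\min\{|B_{il}^{(h)}|,|B_{ik}^{(h)}|,|B_{lk}^{(h)}|\}$ is not a valid inequality in general (take $X_i=X_lX_k$ with $X_l,X_k$ independent symmetric signs: all pairwise covariances vanish while $\E\tilde X_i\tilde X_l\tilde X_k=1$; ferromagnetism would have to be used in an essential and unproven way to rescue any such statement), and decay in a path metric $d_A$ is not available here since no geometric or sparsity structure is assumed --- a point you correctly notice for the two-point function but then implicitly reuse for the three-point one. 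The paper explicitly remarks that K\"unsch's argument, which gives $\max_i\sum_j|\Cov(X_i,X_j)|\leq C(\alpha)$ uniformly in $h$ (your two-point input, which is fine), does not extend to higher-order correlations; this is precisely why its proof does not attempt entrywise three-point bounds at all.

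What the paper does instead is to observe that only the aggregated quantity matters: with $\theta_j=n^{-1/2}$ the bound in \cref{eq:thm1} reduces to \cref{eq:vanish1}, so it suffices to show $\E\big[\sum_i\Cov(X_i,S)\tilde X_i\big]\tilde S\tilde X_k=O(1)$ uniformly in $k$ and $h$, which makes the sum of order $n\cdot n^{-2}=1/n$. This is proved by writing $\E(\tilde S^{(k)})^2\tilde X_k$ as a difference of conditional second moments given $X_k=\pm1$ and bounding that difference via a monotone coupling of the two conditional Glauber dynamics: the disagreement count $D_t$ has negative drift of order $D_t/n$ under the Dobrushin condition, so $D_\infty$ has geometrically decaying law with $\E D_\infty^2\leq C(\alpha)$ (\cref{lem:geometric}), and \cref{lem:meandecrease,lem:Cd1} convert this into $\E((\tilde S^{(k)})^2\mid X_k=1)-\E((\tilde S^{(k)})^2\mid X_k=-1)=O(1)$; positive association is genuinely used to build the ordered coupling. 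Without this argument, or a worked-out substitute (e.g. a second-order Dobrushin comparison uniform in $h$, which you mention but do not supply), your claimed estimate $\sum_k\{\sum_iS_iT_{ik}\}^2\leq C(\alpha)/n$ is unsupported, so the proposal does not yet prove \cref{eq:cor3}. Two smaller points: \cref{thm:1} should be invoked through \cref{rem:1.1} with Poincar\'e constant $1/(1-\alpha)$ under Dobrushin, since after the positive-semidefinite diagonal shift one need not have $\|A\|_{\mathrm{op}}<1$; and your lower bound on $\sigma_n^2$ for \cref{eq:cor32} is acceptable, though the paper obtains it more directly from $\inf_i\Var(X_i)\geq c$ together with positive association.
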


As far as we know, \cref{thm:3} is the first CLT for Ising models \emph{without} a spatial structure. 
See \cite{kunsch1982decay} for a CLT for the Ising model on the integer lattice under the Dobrushin condition. We provide the proof of \cref{thm:3} in \cref{sec:4}. 

\begin{remark}\label{rem:CW}
The result \cref{eq:cor3} is sharp in a certain sense: for the Curie--Weiss model with zero external field and inverse temperature $\beta>0$,
\be{
\mu(x)\propto \exp\left(\frac{\beta}{2n}\big(\sum_{i=1}^n x_i\big)^2\right),\quad x_i\in \{-1,1\},
}
$\alpha_n$ in \cref{eq:alpha} equals $\frac{n-1}{n}\beta$. It is known that \cref{eq:cor3} does not hold if $\beta\geq 1$ (\cite{ellis2012entropy}).
\end{remark}

\begin{remark}[Beyond quadratic interactions]\label{rem:beyongquadra}
We will see that the proof of \cref{thm:3} works beyond quadratic interactions (we need both $\beta_n$ and $\gamma_n$ in \cref{eq:betagamma} to be bounded away from 1 for the general case). In particular, we can recover the result of \cite{fang2025normal} on the CLT for the exponential random graph model at sufficiently high temperature.


\end{remark}



\subsection{Application 3: SK model}

The Sherrington--Kirkpatrick (SK) spin glass model (\cite{sherrington1975solvable}) is defined by taking the interaction matrix in \cref{eq:Isingmodel} as
$A=\beta H$ with $\beta>0$ and $H$ a $n\times n$ GOE matrix consisting of independent Gaussian entries with variance $1/n$ above the diagonal. The SK model has been a subject of great interest. See \cite{talagrand2010mean,talagrand2010mean2} for an introduction of the model and important techniques (e.g., the cavity method) and results (e.g., the Parisi formula for the limiting free energy). See also recent surveys by \cite{chatterjee2026michel} for the history and by \cite{montanari2026spin} for computational and statistical aspects of the model.

It was pointed out in \cite[Corollary~2]{bauerschmidt2019very} that 
if $\beta<1/4$, then there exists a constant $c_\beta<\infty$ that depends only on $\beta$ such that
the Poincar\'e inequality \cref{eq:rem11} with $C_p=c_\beta$ holds with probability tending to 1 (with respect to the randomness of the GOE $H$) as $n\to \infty$. 
A recent work by \cite{anari2024trickle} improved the threshold from $1/4$ to $\approx 0.295$. It remains an open question whether this bound could be improved up to 1 (\cite{bandeira2025randomstrasse101}).

Applying \cref{thm:1}, we obtain the following result.
\begin{corollary}\label{thm:4}
Let $H_n, n\geq 1$, be a sequence of $n\times n$ GOE matrices as above.
Let $\beta>0$ be a sufficiently small constant.
For each $n$, consider the SK model defined as in \cref{eq:Isingmodel} with the interaction matrix $A=A_n=\beta H_n$.
Let $\langle \cdot \rangle$ denote the conditional expectation with respect to the model given the random interaction matrix.
Let
\be{
W_n=\frac{1}{\sqrt{n}}\sum_{i=1}^n X_i,\quad \mu_n(H_n):=\langle W_n\rangle, \quad \sigma_n^2(H_n):=\langle W_n^2\rangle -\langle W_n\rangle^2.
}
Then, for any $\epsilon>0$, as $n\to \infty$,
\ben{\label{eq:cor4}
P\Big(\mathcal{W}_2 \big(\mathcal{L}(W_n \vert H_n), N(\mu_n(H_n),\sigma_n^2(H_n))\big)\geq \epsilon \Big)\to 0.
}
If, in addition, $\|h\|_\infty$ is bounded by a universal constant $c_h$ and $0<\beta<\beta_h$ for a sufficiently small $\beta_h$ depending on $c_h$, then $\E(\sigma_n^2(H_n))$ is bounded away from 0 for sufficiently large $n$, $\Var(\sigma_n^2(H_n))\to 0$ as $n\to \infty$, and hence \cref{eq:cor4} gives a meaningful CLT.
\end{corollary}

It is possible to trace the explicit threshold for $\beta$ in our proof, and the proof can be improved to optimize the threshold. However, this would require a much longer argument than the current one. Even with such refinements, the threshold is unlikely to reach 0.295, which is itself unlikely to be the optimal Poincar\'e constant (see the discussion above \cref{thm:4}). Therefore, we refrain from pursuing such refinements.



\begin{remark}
CLTs in the SK model have been obtained for the free energy (\cite{aizenman1987some,dey2026fluctuations}), the overlaps (\cite[Section~1.10]{talagrand2010mean}), and general macroscopic observables (\cite{chatterjee2009central}). The modern proofs are typically performed by the cavity method together with the method of moments or Stein's method. 
The result closest to our \cref{thm:4} is \cite[Theorem~1.6]{chatterjee2009central}, which should be able to translate into a CLT for the total magnetization when $\beta$ is sufficiently small and $h_i=h$ for all $i$, although they only did it for the Hamiltonian in their Theorem 1.5(2). 
Their approach is based on Stein's method and relies on the existing results about the 
concentration of the overlaps (which in turn relies on the condition that $h_i=h$ for all $i$). Instead, we proceed by controlling the two-point and three-point functions appearing in our \cref{thm:1}. 
On the other hand, the cavity method will still appear in our proof when we control these functions following the approach of \cite{adhikari2021dynamical} (see \cref{sec:5}).


\end{remark}



\subsection{Further remarks.}

\begin{remark}[On the role of external field]
While we give sharp results in some applications (see \cref{rem:CW}),
one shortcoming of 
the bound \cref{eq:thm1} is that it does not depend on the external field in \cref{eq:Isingmodel}. 
Establishing a Poincar\'e inequality with optimal dependence on the external field is a difficult question in itself.
For example, the main result of \cite{anari2024trickle} depends only on the interaction matrix.
For another example, \cite[Lemma~3.20]{chen2025localization} requires a uniform control on the operator norm of the covariance matrix over all exponential tilts (external fields) of the original measure.
Even if we know that a Poincar\'e inequality holds for the original model, the $\sup_{h\in \IR^n}$ in the bound \cref{eq:thm1} forces us to study the model with arbitrary external field.
The $\sup_{h\in \IR^n}$ in \cref{eq:thm1} comes from the crude bound \cref{eq:step41} and may be replaced by the expectation over a random external field, which depends on the original external field. 
However, the resulting bound appears more complicated and we have not found interesting examples yet.

\end{remark}

\begin{remark}[On the mean and variance]
\cref{thm:1} does not produce (asymptotic) values of the mean $\mu_n$ and variance $\sigma_n^2$. Even the lower bounds on the variance were argued on an ad-hoc basis in our applications. We regard them as separate, problem-specific questions. Note that although the Poincar\'e inequality implies upper bounds on the variance, it does not imply lower bounds. This can be seen by considering the counter-example of the conditional distribution of independent Bernoulli random variables given their summation.
\end{remark}

\begin{remark}[CLT for projections of continuous random vectors]\label{rem:cont}
Our approach also works for continuous random vectors $X$ in $\IR^n$ satisfying a Poincar\'e inequality, that is,
\be{
\Var(f(X))\leq C_p \E |\nabla f(X)|^2
}
for all locally Lipschitz functions $f: \IR^n\to \IR$ with $\E f^2(X)<\infty$, where $\nabla$ denotes the gradient and $|\cdot|$ the Euclidean norm. The approach leads to a similar and slightly more complicated bound than \cref{eq:thm1}.

We would like to mention the related result by \cite{klartag2007central} (see also \cite{klartag2007power}) which states that, for an isotropic, log-concave random vector $X$, $\theta^\top X$ satisfies the CLT for \emph{most} $\theta$ with respect to the uniform probability measure on the unit sphere $S^{n-1}$ in $\IR^n$.  
Related ``randomized" central limit theorems go back to \cite{sudakov1978typical} (see also the recent book by \cite{bobkov2023concentration}).
Recently, the martingale embedding was used in \cite{jiang2020generalized} to study the CLT for the inner product of two log-concave random vectors.
However, we cannot infer a CLT for a given specific $\theta$ from these results. 
\end{remark}

\section{Proof of \cref{thm:1}}\label{sec:proofmain}

Martingale embedding is a well-developed technique to prove CLTs (\cite{hall2014martingale}). Recently, \cite{eldan2020clt} used variations of the martingale embedding constructed by \cite{eldan2016skorokhod} to study the CLT in high dimensions. Their idea is to sum up independent copies of a martingale embedding of a $d$-dimensional random vector and show, by the law of large numbers, that the associated covariance process is well concentrated and thus the resulting sum is close to a $d$-dimensional normal distribution. Our problem is different in that we have a long random vector and we would like to establish the CLT for its projection in a certain direction $\theta$.

\textbf{Proof outline.} In Step 1, we use a particular martingale embedding from \cite{eldan2020clt} (cf. \cref{eq:martingale}) for the $n$-dimensional random vector $X$ following the Ising model \cref{eq:Isingmodel}. Then, the asymptotic normality of $\theta^\top X$ boils down to the concentration of the associated variance process in the direction $\theta$ (cf. \cref{eq:W2}). 
In Step 2, we show that certain parts in the martingale embedding trajectory can be trivially controlled. 
In Step 3, we use the Poincar\'e inequality for $X$ to control the variance of the aforementioned variance process (cf. \cref{eq:afterpoincare}). The resulting upper bound on the variance depends on two-point and three-point functions of $X$ given its value after convoluted with a standard $n$-dimensional Gaussian distribution (cf. \cref{lem:partial}). In Step 4, we observe that the conditional distribution follows the same model \cref{eq:Isingmodel}, except for a change of the external field. As a result, we simplify the final bound to \cref{eq:thm1}.


\medskip

\textbf{Step 1: Martingale embedding.}
We use crucially the following martingale embedding from \cite[Section 4]{eldan2020clt}, which builds upon earlier works by \cite{eldan2016skorokhod} and \cite{eldan2018regularization}.

Let $\mu$ be the probability distribution \cref{eq:Isingmodel} on $\IR^n$.
Let $Y=(Y_t)_{t\in [0,1]}$ be the associated F\"ollmer process with filtration $\mathcal{F}_t$ (\cite{follmer2005entropy,follmer2006time}). 
In particular, the conditional law of $Y$ given the endpoint $Y_1$ is a Brownian bridge.
Set
\ben{\label{eq:XtYt}
X_t=\E[Y_1\vert Y_t]\ \text{and}\  \Gamma_t=\frac{\Cov[Y_1\vert Y_t]}{1-t}.
}
From \cite[Section~4]{eldan2020clt} (see \cref{rem:smooth} for a technical remark), we know that there is a martingale embedding of $X_t$ as
\ben{\label{eq:martingale}
X_t=\E Y_1+\int_0^t \Gamma_t dB_t^{(n)},   \quad t\in [0,1],
}
where $B_t^{(n)}$ is an $\mathcal{F}_t$-adapted standard Brownian motion in $\IR^n$.
In particular, $X_1=Y_1\sim \mu$.
In this proof, $X_t$ and $Y_t$, $0\leq t\leq 1$, denote random vectors in $\IR^n$ with components $X_{t1}, \dots, X_{tn}$ and $Y_{t1},\dots, Y_{tn}$, respectively.
As a consequence, the random variable $W:=W_n$ in \cref{eq:Wmusigma} can be constructed as
\be{
W=\mu_n+\int_0^1 \big|\theta^\top \Gamma_t\big| dB_t,\quad \Var(W)=\int_0^1 \E\big|\theta^\top \Gamma_t \big|^2 dt,
}
where $\big|\theta^\top \Gamma_t\big|=\sqrt{\theta^\top \Gamma_t^2 \theta}$ and $B_t$ is a one-dimensional standard Brownian motion adapted to $\mathcal{F}_t$.

To approximate $W$ by a normal random variable, we rewrite
\ben{\label{eq:couple}
W=\mu_n+\int_0^1 \sqrt{\E |\theta^\top \Gamma_t|^2} dB_t+ \int_0^1\left( \big|\theta^\top \Gamma_t\big|-\sqrt{\E |\theta^\top \Gamma_t|^2} \right) dB_t.
}
Define the normal random variable 
\be{
G=\mu_n+\int_0^1 \sqrt{\E |\theta^\top \Gamma_t|^2} dB_t\sim N(\mu_n, \Var(W)).
}
From the definition of $p$-Wasserstein distance in \cref{eq:defpWass}, the coupling \cref{eq:couple} and the It\^o\kern0pt isometry, we have
\besn{\label{eq:W2}
\mathcal{W}_2^2 (\mathcal{L}(W), \mathcal{L}(G))&\leq \E\left| \int_0^1\left( \big|\theta^\top \Gamma_t\big|-\sqrt{\E |\theta^\top \Gamma_t|^2} \right) dB_t\right|^2\\
&=\int_0^1 \E\left( \big|\theta^\top \Gamma_t\big|-\sqrt{\E |\theta^\top \Gamma_t|^2} \right)^2   dt\\
&\leq \int_0^1  \E \left( \frac{\left|\theta^\top \Gamma_t\right|^2-\E \left|\theta^\top \Gamma_t\right|^2}{\sqrt{\E |\theta^\top \Gamma_t|^2}} \right)^2  dt\\
&= \int_0^1   \frac{\Var(\left|\theta^\top \Gamma_t\right|^2)}{\E |\theta^\top \Gamma_t|^2}   dt.
}

\begin{remark}\label{rem:smooth}
The representation \cref{eq:martingale} is valid as long as $\mu$ has finite second moment (from personal communication with Yuta Koike).
However, \cite{eldan2020clt} only considered the situation that $\mu$ has a smooth density and bounded support.
We may also apply their result directly by convolving $Y_1$ with an arbitrarily small smooth component.
From \cref{eq:W2}, only the joint distribution of $(Y_1, Y_t)$ for any given $t\in [0,1]$ matters in the subsequent computations.
Therefore, by a limiting argument, we can take the small smooth component to be zero from this point onward.
We have also changed their $Y_1\vert \mathcal{F}_t$ to $Y_1\vert Y_t$ in \cref{eq:XtYt} using the Markov property of the F\"ollmer process (the F\"ollmer process used in \cite[Section~4]{eldan2020clt} is defined by a stochastic differential equation with drift as in \cite[Eq.(9)]{eldan2018regularization}).

Alternatively, one may 
use the representation of $X_1$ in \cite[Eq.(18)]{eldan2020stability}, combined with the limiting argument above.
\end{remark}


From the description of the law of $Y$ via Brownian bridge, we have, given $Y_1\sim \mu$, 
the conditional distribution of $Y_t$ is the same as that of $t Y_1+\sqrt{t(1-t)}Z$, where $Z\sim N(0, I_n)$ is an independent $n$-dimensional standard Gaussian vector.

\medskip

\textbf{Step 2: Initial deduction.}
For a small $\eps>0$ to be chosen, 
we only need to control the 2-Wasserstein distance between $\int_{t\in [0,1]: \E |\theta^\top \Gamma_t|^2\geq \eps} \big|\theta^\top \Gamma_t\big| dB_t$ and its Gaussian counterpart. 
In fact, the remaining integral over those $t$ such that $\E |\theta^\top \Gamma_t|^2<\eps$ contributes at most $2\sqrt{\eps}$ to the 2-Wasserstein distance from a variance computation. 
The next lemma shows that the integral over those $t$ too close to 1 is also small.

\begin{lemma}\label{lem:larget}
We have
\be{
\|\theta^\top X_{1-\eps}-\theta^{\top} X_1\|_2\leq 2\sqrt{\frac{\eps}{1-\eps}}.
}
\end{lemma}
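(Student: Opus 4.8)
The plan is to avoid the It\^o\ route entirely and argue by orthogonal projection. Since $X_{1-\eps}=\E[Y_1\mid Y_{1-\eps}]$ by \cref{eq:XtYt} and $X_1=Y_1$, the left-hand side equals $\|\theta^\top Y_1-\E[\theta^\top Y_1\mid Y_{1-\eps}]\|_2$; because the conditional expectation $\E[\,\cdot\mid Y_{1-\eps}]$ is the orthogonal projection onto $L^2(\sigma(Y_{1-\eps}))$, we get, for \emph{any} $\sigma(Y_{1-\eps})$-measurable $V\in L^2$,
\[
\|\theta^\top X_{1-\eps}-\theta^\top X_1\|_2\ \le\ \|V-\theta^\top Y_1\|_2 .
\]
(Equivalently, one may note that $\theta^\top X_t$ is a martingale, so $\E(\theta^\top X_1-\theta^\top X_{1-\eps})^2=\E(\theta^\top X_1)^2-\E(\theta^\top X_{1-\eps})^2$, and bound $\E(\theta^\top X_{1-\eps})^2$ from below by the same projection inequality.) I would then choose the affine competitor $V:=\tfrac{1}{1-\eps}\,\theta^\top Y_{1-\eps}$, which is manifestly $\sigma(Y_{1-\eps})$-measurable.

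To evaluate $\|V-\theta^\top Y_1\|_2$ I would invoke the joint law of $(Y_1,Y_{1-\eps})$ recorded at the end of Step~1 (the Brownian-bridge description of $Y$): jointly, $(Y_1,Y_{1-\eps})\overset{d}{=}\bigl(Y_1,\ (1-\eps)Y_1+\sqrt{\eps(1-\eps)}\,Z\bigr)$ with $Z\sim N(0,I_n)$ independent of $Y_1$. Substituting, $V-\theta^\top Y_1=\tfrac{1}{1-\eps}\bigl[(1-\eps)\theta^\top Y_1+\sqrt{\eps(1-\eps)}\,\theta^\top Z\bigr]-\theta^\top Y_1=\sqrt{\tfrac{\eps}{1-\eps}}\;\theta^\top Z$, and since $\theta$ is a unit vector, $\E(\theta^\top Z)^2=1$, so
\[
\|\theta^\top X_{1-\eps}-\theta^\top X_1\|_2\ \le\ \sqrt{\frac{\eps}{1-\eps}}\ \le\ 2\sqrt{\frac{\eps}{1-\eps}} .
\]
In fact this already gives something stronger than the stated bound, so the factor $2$ is harmless slack (useful if one instead prefers a coarser triangle-inequality split or a less sharp competitor).

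I do not expect a real obstacle here; the only point requiring care is conceptual. The naive approach would bound the increment by the It\^o\ isometry, $\E|\theta^\top X_1-\theta^\top X_{1-\eps}|^2=\int_{1-\eps}^1\E|\theta^\top\Gamma_s|^2\,ds$, which forces one to control $\E|\theta^\top\Gamma_s|^2$ as $s\to1$ — precisely the delicate quantity the remainder of the proof (the Poincar\'e-based estimate of $\Var(|\theta^\top\Gamma_t|^2)$) is devoted to, and it is not needed for this lemma. One should also be sure to use the \emph{joint} distribution of $(Y_1,Y_{1-\eps})$ and not merely the marginal of $Y_{1-\eps}$; this is legitimate by the Markov/Brownian-bridge structure noted in \cref{rem:smooth}.
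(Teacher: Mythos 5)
Your proof is correct, and it closes the estimate by a slightly different mechanism than the paper. Both arguments rest on the same ingredient: the Brownian-bridge description of the joint law, $(Y_1,Y_{1-\eps})\stackrel{d}{=}\bigl(Y_1,(1-\eps)Y_1+\sqrt{\eps(1-\eps)}\,Z\bigr)$ with $Z\sim N(0,I_n)$ independent of $Y_1$, and both effectively compare $\E[\theta^\top Y_1\mid Y_{1-\eps}]$ with the affine statistic $\tfrac{1}{1-\eps}\theta^\top Y_{1-\eps}=\theta^\top Y_1+\sqrt{\eps/(1-\eps)}\,\theta^\top Z$. The paper does this by adding and subtracting $\sqrt{\eps/(1-\eps)}\,\theta^\top Z$ inside the conditional expectation, so the difference becomes $\sqrt{\eps/(1-\eps)}\bigl(\theta^\top Z-\E[\theta^\top Z\mid Y_1+\sqrt{\eps/(1-\eps)}Z]\bigr)$, and then bounds it by the triangle inequality together with the $L^2$-contractivity of conditional expectation, which is where the factor $2$ comes from. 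You instead invoke the best-approximation (orthogonal projection) property of conditional expectation and plug in the same competitor, which yields the sharper bound $\sqrt{\eps/(1-\eps)}$ in one line; your measurability and $L^2$ checks for the competitor are fine, and your use of the joint law is exactly what the paper itself uses. So the two proofs are essentially dual formulations of one computation: yours is marginally cleaner and improves the constant from $2$ to $1$ (which is immaterial for \cref{eq:deduction}), while the paper's version avoids appealing to the variational characterization and makes the cancellation explicit. Your parenthetical martingale remark ($\E(\theta^\top X_1-\theta^\top X_{1-\eps})^2=\E(\theta^\top X_1)^2-\E(\theta^\top X_{1-\eps})^2$) is also a valid alternative start, and your observation that one should not route this lemma through the It\^o isometry bound on $\int_{1-\eps}^1\E|\theta^\top\Gamma_s|^2\,ds$ matches the paper's intent: the whole point of the lemma is to discard the interval near $t=1$ before the Poincar\'e-based variance estimate is applied.
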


\begin{proof}[Proof of \cref{lem:larget}]
Recall from \cref{eq:XtYt} that
\bes{
\theta^\top X_{1-\eps}&= \theta^\top \E[Y_1\vert Y_{1-\eps}]= \E[\theta^\top Y_1\vert (1-\eps) Y_1+\sqrt{\eps(1-\eps)}Z],
}
where $Z\sim N(0, I_d)$ is independent of $Y_1$.
Therefore,
\bes{
&\theta^\top X_{1-\eps}-\theta^{\top} X_1\\
&=\E[\theta^\top Y_1\vert  Y_1+\sqrt{\frac{\eps}{1-\eps}}Z]- \theta^\top Y_1\\
&=\E[\theta^\top Y_1+\sqrt{\frac{\eps}{1-\eps}}\theta^\top Z\vert  Y_1+\sqrt{\frac{\eps}{1-\eps}}Z]- \E[\sqrt{\frac{\eps}{1-\eps}}\theta^\top Z\vert  Y_1+\sqrt{\frac{\eps}{1-\eps}}Z]- \theta^\top Y_1\\
&=\sqrt{\frac{\eps}{1-\eps}}\theta^\top Z- \E[\sqrt{\frac{\eps}{1-\eps}}\theta^\top Z\vert  Y_1+\sqrt{\frac{\eps}{1-\eps}}Z],
}
and hence
\be{
\|\theta^\top X_{1-\eps}-\theta^{\top} X_1\|_2\leq 2\sqrt{\frac{\eps}{1-\eps}}\|\theta^\top Z\|_2=2\sqrt{\frac{\eps}{1-\eps}},
}
where we used the assumption that $\theta$ is a unit vector in the last equality.
\end{proof}
With the initial deduction, 
we have, for $\eps<1/2$,
\ben{\label{eq:deduction}
\mathcal{W}_2 (\mathcal{L}(W), \mathcal{L}(G))\leq 8\sqrt{\eps}+\sqrt{\frac{1}{\eps} \int_0^{1-\eps} \Var(\left|\theta^\top \Gamma_t\right|^2) dt}.
}


\medskip

\textbf{Step 3: Applying Poincar\'e.}
Now we bound
\ben{\label{eq:step31}
\Var(\big|\theta^\top \Gamma_t\big|^2)=\frac{1}{(1-t)^4} \Var\left(\big|\theta^\top \Cov(Y_1\vert Y_t)\big|^2\right).
}
Recall $Y_1$ satisfies the Poincar\'e inequality \cref{eq:Poincare} with Poincar\'e constant 
\be{
C_p:=\frac{1}{1-\|A\|_{\text{op}}}>0.
}
From \cref{eq:Poincare} and the Gaussian Poincar\'e inequality, for any locally Lipschitz functions $f: \IR^n\to \IR$, we have
\ba{
&\Var(f(Y_t))=\Var(f(tY_1+\sqrt{t(1-t)}Z))\\
=&\E \Big(\Var(f(tY_1+\sqrt{t(1-t)}Z) \vert Z)\Big)+\Var\Big(\E (f(tY_1+\sqrt{t(1-t)}Z) \vert Z)\Big)\\
\leq& C_p\sum_{k=1}^n \E \Big(f(tY_1^{\{k,+\}}+\sqrt{t(1-t)}Z)-f(tY_1^{\{k,-\}}+\sqrt{t(1-t)}Z)\Big)^2\\
&+ t(1-t) \sum_{k=1}^n \E \Big(\int \partial_k f(ty+\sqrt{t(1-t)}Z)\mu(dy)   \Big)^2\\
\leq & 4t^2C_p \sum_{k=1}^n \E \Big( \partial_k f(tY_1^{\{k,-\}}+2tUe_k+\sqrt{t(1-t)}Z)\Big)^2\\
&+ t(1-t) \sum_{k=1}^n \E \Big(\partial_k f(tY_1+\sqrt{t(1-t)}Z)   \Big)^2,
}
where $Y_1^{\{k,+\}}$ ($Y_1^{\{k,-\}}$ resp.) has the $k$th coordinate equal to $+1$ ($-1$ resp.) and other coordinates equal to those of $Y_1$, 
$U$ is a uniform random variable in $[0,1]$ independent of everything else, $e_k$ is the unit vector in $\IR^n$ with $1$ in the $k$th coordinate and $0$ in other coordinates, 
and $\partial_k$ denotes the partial derivative with respect to the $k$th coordinate.
This implies that
\ban{\label{eq:afterpoincare}
&\Var\left(\left|\theta^\top \Cov(Y_1\vert Y_t)\right|^2\right)=
\Var\left(\sum_{i=1}^n \left[\sum_{j=1}^n \theta_j \Cov(Y_{1i}, Y_{1j}\vert Y_t)\right]^2\right)\nonumber \\ 
\leq & 20 C_p\sup_{\xi\in [0,1]} \sum_{k=1}^n \E \Bigg[\sum_{i=1}^n \left(\sum_{j=1}^n \theta_j \Cov(Y_{1i}, Y_{1j}\vert Y_t=y_t) \right) \nonumber \\
&\qquad\qquad\qquad\qquad\quad \times \left(\sum_{l=1}^n \theta_l\partial_k \Cov(Y_{1i}, Y_{1l} \vert Y_t=y_t)  \right)\Bigg\vert_{y_t=t(Y_1^{\{k,-\}}+2\xi e_k )+\sqrt{t(1-t)}Z} \Bigg]^2,
}
where $\partial_k$ denotes the partial derivative with respect to the $k$th coordinate of $y_t$. We can compute the partial derivative as in the following lemma.
\begin{lemma}\label{lem:partial}
We have
\besn{\label{eq:lempartial}
&\partial_k \Cov(Y_{1i}, Y_{1l} \vert Y_t=y_t)\\
=& \frac{1}{1-t} \Cov(Y_{1i}Y_{1l}, Y_{1k} \vert Y_t=y_t)
-\frac{1}{1-t} \Cov(Y_{1i}, Y_{1k}\vert Y_t=y_t)\E (Y_{1l}\vert Y_t=y_t)\\
&-\frac{1}{1-t} \Cov(Y_{1l}, Y_{1k}\vert Y_t=y_t)\E (Y_{1i}\vert Y_t=y_t)\\
=& \frac{1}{1-t} \E \Big[(Y_{1i}-\E(Y_{1i}\vert Y_t=y_t))(Y_{1l}-\E(Y_{1l}\vert Y_t=y_t))(Y_{1k}-\E(Y_{1k}\vert Y_t=y_t))  \Big\vert Y_t=y_t  \Big].
}
\end{lemma}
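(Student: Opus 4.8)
The plan is to compute the partial derivative of a conditional covariance with respect to a coordinate of the conditioning value by first writing everything in terms of an exponential tilt of the conditional law. Recall from Step 1 that, conditionally on $Y_1\sim\mu$, the random vector $Y_t$ has the law of $tY_1+\sqrt{t(1-t)}Z$ with $Z\sim N(0,I_n)$ independent of $Y_1$. Hence the conditional density of $Y_t$ at $y_t$ is proportional to $\int \exp\!\big(-|y_t-ty|^2/(2t(1-t))\big)\mu(\d y)$, and the conditional law of $Y_1$ given $Y_t=y_t$ has a density (with respect to $\mu$) proportional to $\exp\!\big(-|y_t-ty|^2/(2t(1-t))\big)$, which as a function of $y$ is proportional to $\exp\!\big(\langle y_t, y\rangle/(1-t)\big)$ times a factor not depending on $y$ (the $|y|^2$ term is absorbed because $Y_1\in\{-1,1\}^n$ makes $|y|^2=n$ constant — this is where the discrete support is used, though a similar identity holds in general). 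So the conditional law of $Y_1$ given $Y_t=y_t$ is an exponential family in the natural parameter $\lambda:=y_t/(1-t)$.

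First I would make this precise: let $\nu_\lambda$ denote the tilted measure $\nu_\lambda(\d y)\propto e^{\langle\lambda,y\rangle}\mu(\d y)$, so that $\E[\,\cdot\mid Y_t=y_t]=\E_{\nu_\lambda}[\cdot]$ with $\lambda=y_t/(1-t)$. Then $\partial_k$ with respect to $y_{t,k}$ equals $\frac{1}{1-t}$ times $\partial/\partial\lambda_k$. Second, I would use the standard cumulant-generating-function calculus for exponential families: differentiating $\E_{\nu_\lambda}[F(Y)]$ in $\lambda_k$ produces $\Cov_{\nu_\lambda}(F(Y),Y_k)$. Applying this to $F(Y)=Y_{1i}Y_{1l}$, to $F(Y)=Y_{1i}$, and to $F(Y)=Y_{1l}$, and combining via the product rule on $\Cov(Y_{1i},Y_{1l})=\E[Y_{1i}Y_{1l}]-\E[Y_{1i}]\E[Y_{1l}]$, yields exactly
\be{
\partial_k\Cov(Y_{1i},Y_{1l}\mid Y_t=y_t)=\tfrac{1}{1-t}\Big(\Cov(Y_{1i}Y_{1l},Y_{1k})-\Cov(Y_{1i},Y_{1k})\E Y_{1l}-\Cov(Y_{1l},Y_{1k})\E Y_{1i}\Big),
}
all conditional on $Y_t=y_t$, which is the first displayed equality in \cref{eq:lempartial}. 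Third, I would verify the second equality purely algebraically: expanding the centered triple product $\E[(Y_{1i}-\E Y_{1i})(Y_{1l}-\E Y_{1l})(Y_{1k}-\E Y_{1k})]$ gives $\E[Y_{1i}Y_{1l}Y_{1k}]-\E Y_{1i}\E[Y_{1l}Y_{1k}]-\E Y_{1l}\E[Y_{1i}Y_{1k}]-\E Y_{1k}\E[Y_{1i}Y_{1l}]+2\E Y_{1i}\E Y_{1l}\E Y_{1k}$, and one checks this coincides with the expression in the parenthesis above after writing $\Cov(Y_{1i}Y_{1l},Y_{1k})=\E[Y_{1i}Y_{1l}Y_{1k}]-\E[Y_{1i}Y_{1l}]\E Y_{1k}$ and similarly for the other two covariances; the bookkeeping closes.

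The only place requiring genuine care — the main obstacle — is justifying the interchange of differentiation and integration (i.e., that $\lambda\mapsto\E_{\nu_\lambda}[F(Y)]$ is differentiable with derivative $\Cov_{\nu_\lambda}(F(Y),Y_k)$) and the passage from $y_t$-derivatives to $\lambda$-derivatives. Here the discreteness of the support is a blessing: the state space $\{-1,1\}^n$ is finite, so $\nu_\lambda$ is a finite convex combination of point masses with weights that are real-analytic in $\lambda$, every moment is a ratio of finite sums of exponentials, and differentiation under the (finite) sum is immediate; no dominated-convergence argument is needed. One should also record that the $|y|^2=n$ cancellation in the Gaussian kernel is what makes the tilt linear in $\lambda$ — without it one would carry an extra $\exp(-t\,n/(2(1-t)))$ normalizing factor independent of $y$, which drops out anyway. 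With these observations the lemma follows; everything else is the routine exponential-family and polynomial algebra sketched above.
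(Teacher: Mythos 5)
Your proof is correct and follows essentially the same route as the paper's: both differentiate the explicit conditional mass function of $Y_1$ given $Y_t=y_t$ with respect to $y_{tk}$ to obtain the covariance-with-$Y_{1k}$ identities for $\E(Y_{1i}Y_{1l}\vert Y_t=y_t)$ and $\E(Y_{1i}\vert Y_t=y_t)$, then finish with the product rule and the same centered triple-product algebra. Your exponential-tilt packaging (natural parameter $y_t/(1-t)$, using $|y|^2=n$ on $\{-1,1\}^n$) is just a cleaner way of organizing the same computation, and is in fact the very observation the paper itself invokes in Step 4.
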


\begin{proof}[Proof of \cref{lem:partial}]
Recall the distribution of $Y_1$ from \cref{eq:Isingmodel}. Write 
\be{
U(y):= \frac{1}{2}y^\top A y+h^\top y, \quad y\in \{-1,1\}^n.
}
The conditional probabilty mass function of $Y_1$ at $y\in \{-1,1\}^n$ given $Y_t=y_t$ is
\ben{\label{eq:conddens}
p(y\vert y_t)=\exp\left(U(y)-\frac{|y_t-ty|^2}{2t(1-t)} -\psi_t(y_t) \right),\ y\in \{-1,1\}^n,
}
where 
\be{
\psi_t(y_t)=\log\left[ \sum_{y\in \{-1,1\}^n}\exp\left(U(y)-\frac{|y_t-ty|^2}{2t(1-t)} \right)   \right].
}
We can compute directly that
\ben{\label{eq:partialk}
\partial_k \psi_t(y_t)=-\frac{y_{tk}}{t(1-t)}+\frac{1}{1-t}\E (Y_{1k} \vert Y_t=y_t),
}
where $y_{tk}$ denotes the $k$th coordinate of $y_t$.
Note that
\ben{\label{eq:covcomp}
\Cov(Y_{1i}, Y_{1l} \vert Y_t=y_t)=\E (Y_{1i} Y_{1l} \vert Y_t=y_t) -\E (Y_{1i} \vert Y_t=y_t)\E(Y_{1l} \vert Y_t=y_t).
}
We first compute the partial derivative with respect to the first term on the right-hand side of \cref{eq:covcomp}. 
We have, from \cref{eq:conddens},
\be{
\E (Y_{1i} Y_{1l} \vert Y_t=y_t)=\sum_{y_1\in \{-1, 1\}^n} y_{1i} y_{1l} \exp\left(U(y)-\frac{|y_t-ty|^2}{2t(1-t)} -\psi_t(y_t) \right) .
}
Differentiating with respect to $y_{tk}$, we obtain from \cref{eq:partialk} that
\besn{\label{eq:partialkil}
\partial_k \E (Y_{1i} Y_{1l} \vert Y_t=y_t)&=\sum_{y_1\in \{-1, 1\}^n} y_{1i} y_{1l} \exp\left(-U(y)-\frac{|y_t-ty|^2}{2t(1-t)} -\psi_t(y_t) \right) \\
&\qquad \times \left[-\frac{y_{tk}-t y_{1k}}{t(1-t)}+\frac{y_{tk}}{t(1-t)}-\frac{1}{1-t} \E(Y_{1k}\vert Y_t=y_t) \right] \\
&=\frac{1}{1-t}\left\{ \E(Y_{1i} Y_{1l} Y_{1k} \vert Y_t=y_t)- \E(Y_{1i} Y_{1l}  \vert Y_t=y_t)\E( Y_{1k} \vert Y_t=y_t) \right\}.
}
Similar calculations yield
\ben{\label{eq:partialki}
\partial_k \E (Y_{1i} \vert Y_t=y_t)=\frac{1}{1-t} \left\{\E(Y_{1i} Y_{1k} \vert Y_t=y_t)- \E(Y_{1i}   \vert Y_t=y_t)\E( Y_{1k} \vert Y_t=y_t) \right\}.
}
Combining \cref{eq:covcomp} with \cref{eq:partialkil,eq:partialki} proves the first equation of \cref{eq:lempartial}. The second equation of \cref{eq:lempartial} follows by algebra.
\end{proof}


\textbf{Step 4: Final simplifications.}
From \cref{eq:conddens}, the conditional probability mass function of $Y_1$ given $Y_t=y_t$ is
\be{
\mu(y\vert Y_t=y_t)\propto \mu(y)\exp\left(-\frac{|y-y_t/t|^2}{2(1-t)/t} \right)\propto \exp\left(\frac{1}{2} y^\top A y +(h+\frac{y_t}{1-t})^\top y   \right),\quad y\in \{-1,1\}^n.
}
This is of the same form as \cref{eq:Isingmodel}, except for a change of the external field. From \cref{eq:afterpoincare}, \cref{lem:partial}, taking supremum over all possible values of $y_t$ and then dropping the expectation on the right-hand side of \cref{eq:afterpoincare}, we obtain (recall the notation \cref{eq:Bilk})
\ben{\label{eq:step41}
\Var\left(\left|\theta^\top \Cov(Y_1\vert Y_t)\right|^2\right) 
\leq  \frac{20 C_p}{(1-t)^2}\sup_{h\in \IR^n} \sum_{k=1}^n \left\{\sum_{i=1}^n \Big[\sum_{j=1}^n \theta_j B_{ij}^{(h)}\Big]\Big[\sum_{l=1}^n \theta_l B_{ilk}^{(h)}\Big] \right\}^2.
} 
From \cref{eq:deduction}, \cref{eq:step31}, \cref{eq:step41} and
\be{
\int_0^{1-\eps} \frac{1}{(1-t)^6} dt\leq \frac{1}{5\eps^5},
} 
we obtain, for $\eps<1/2$,
\bes{
&\mathcal{W}_2 (\mathcal{L}(W_n), N(\mu_n, \sigma_n^2))\leq 8\sqrt{\eps}\\
& + \sqrt{\frac{4}{\eps^6(1-\|A\|_{\text{op}})} \sup_{h\in \IR^n} \sum_{k=1}^n \bigg[\sum_{i=1}^n \Big(\sum_{j=1}^n \theta_j B_{ij}^{(h)}\Big)\Big(\sum_{l=1}^n \theta_l B_{ilk}^{(h)}\Big) \bigg]^2 }.
}
The final bound \cref{eq:thm1} follows by optimizing $\eps$.

\section{Proof of \cref{eq:cor2}}\label{sec:3}
It suffices to prove that $B_{ij}^{(h)}$ in \cref{eq:Bilk} is exponentially small in terms of $d(i,j)$ and a similar result for $B_{ilk}^{(h)}$. Such results for fast-mixing Ising models with short-range interaction are well-known in the literature; see \cite{holley1976applications}, \cite{holley11985possible}, \cite{martinelli1994approach} and \cite{martinelli1999lectures}. For the sake of completeness, we still give the proof below.

\medskip

\textbf{Step 1: From Poincar\'e to exponential ergodicity.}
Note that the Dirichlet form $\mathcal{E}_\mu(f,f)$ in \cref{eq:Poincare} corresponds to the continuous time Glauber dynamics with generator (cf. the displayed equation below Eq.(6) in \cite{eldan2022spectral})
\be{
(\mathcal{L}_\mu f)(x)=\sum_{i=1}^n (\E_\mu [f(X)\vert X_{\sim i}=x_{\sim i}] -f(x)).
}
This process can be described as follows: Each spin in the system is associated with a Poisson clock that ticks independently of the others. The ticking rate of the clock for each spin is 1. When the clock for a spin $i$ ticks, the spin value is resampled according to its conditional distribution given all the other spin values $X_{\sim i}$.

From the Poincar\'e inequality \cref{eq:Poincare}, the continuous time process satisfies $L^2$-exponential ergodicity (cf. \cite[Theorem~1.1.1]{wang2005functional} and \cite[Theorem~2.18]{van2016probability}), that is,
\ben{\label{eq:L2ergo}
\|P_t f-\mu f\|_{L^2(\mu)}\leq e^{-t/C_p} \|f-\mu f\|_{L^2(\mu)},
}
where $C_p:=\sup_{n\geq 1} (1/(1-\|A_n\|_{\text{op}}))$ (we take it to be the supremum of the Poincar\'e constants over all $n$, which ensures a uniform exponential decay rate).

\medskip

\textbf{Step 2: From exponential ergodicity to correlation decay.}  

Fix any given index $n$.
We first bound $B_{ij}^{(h)}=\Cov(X_i, X_j)$ where the covariance is computed under the model \cref{eq:Isingmodel} with an arbitrary external field $h$. The basic idea from \cite{martinelli1999lectures} is as follows: We let $f$ in \cref{eq:L2ergo} be $f_1(x)=x_i x_j$, $f_2(x)=x_i$ or $f_3(x)=x_j$. We first start the continuous time Glauber dynamics $\{X(t)\}_{t\geq 0}$ from a suitable \emph{fixed} position $X(0)=x_0\in \IR^n$. Then, we choose $t=\delta\cdot d(i,j)$ for a sufficiently small constant $\delta>0$. Finally, we argue that $P_t f$ deviates from $\mu f$ by an exponentially small amount in terms of $t$, yet, with overwhelming probability, $X_i(t)$ does not depend on $X_j(t)$.

More precisely, for the choices of $f=f_1, f_2,$ or $f_3$ above, we have $|f(x)|\leq 1$ and the right-hand side of \cref{eq:L2ergo} is bounded by $e^{-t/C_p}$ (recall variance is bounded by range$^2/4$).
Then, there exists $x_0\in \IR^n$ (from Markov's inequality) such that
\ben{\label{eq:uniformdecay}
|(P_t f)(x_0)-\mu f|\leq 2e^{-t/C_p},\quad f=f_1, f_2,\ \text{or}\ f_3.
}
From \cref{eq:uniformdecay} and $|f(x)|\leq 1$, we have
\besn{\label{eq:comparison}
&\Big|\big[\E X_i(t) X_j (t) -\E X_i(t) \E X_j (t)\big]-\Cov(X_i, X_j)\Big|\\
=& \Big|\big[(P_t f_1)(x_0) - (P_t f_2)(x_0)\cdot (P_t f_3)(x_0)\big]-[\mu f_1 -\mu f_2 \cdot \mu f_3]\Big| \\
\leq& 6 e^{-t/C_p}. 
}
As in \cite[Eq.(3.7)]{martinelli1999lectures}, let $E(i,t,l)$ be the event that there exist a positive integer $m\geq 1$ and a collection of sites $\{i_0,\dots, i_m\}$ and times $\{t_0,\dots, t_m\}$ such that

i) $0<t_0<\dots<t_m\leq t$ and at each time $t_k$ the Poisson clock associated with $i_k$ ticks, and

ii) $d(i_0, i_m)\geq l$, $d(i_k, i_{k+1})\leq r$. 

It was shown in \cite[below Eq.(3.8)]{martinelli1999lectures} that there exists a constant $k_0$, depending only on the dimension $d$ and the interaction range $r$, such that 
\be{
P(E(i,t,l))\leq e^{-t},\quad \text{for}\ t\leq l/k_0.
}
Note that if the event $E(i,t,l)^c$ occurs, then $X_i(t)$ is independent of $X_j(t)$ if $d(i,j)>l$.
Therefore, for $d(i,j)>l$ and $t\leq l/k_0$, we have
\bes{
&\E X_i(t) X_j (t) -\E X_i(t) \E X_j (t)\\
=& \E X_i(t) X_j (t) 1_{E(i,t,l)^c} -\E X_i(t)1_{E(i,t,l)^c} \E X_j(t) 1_{E(i,t,l)^c} +O(e^{-t})\\
=&O(e^{-t}).
}
This, together with \cref{eq:comparison}, yields the desired exponential decay (in $d(i,j)$) of the covariance $\Cov(X_i, X_j)$.
By a similar argument, $B_{ilk}^{(h)}$ in \cref{eq:Bilk} is also exponentially small in $\max\{d(i,k), d(l,k)\}$.
Therefore, the right-hand side of \cref{eq:thm1} with $\theta_j=1/\sqrt{n}$, $1\leq j\leq n$, tends to 0 as $n\to \infty$.
This finishes the proof of \cref{eq:cor2}.

\section{Proof of \cref{thm:3}}\label{sec:4}

For any given index $n$, let $X=(X_1,\dots, X_n)^\top$ follow the model in \cref{eq:Isingmodel}. Both $X$ and the model parameters \cref{eq:Isingmodel} depend on $n$, but we omit this dependence in the notation for convenience. Below, it is important to note that the error bounds do not depend on $n$.

In \cite[Corollary~3.4 with $d\equiv 0$]{kunsch1982decay}, it was shown that $\max_{1\leq i\leq n}\sum_{j=1}^n |\Cov(X_i, X_j)|$ is uniformly bounded in $n$ under the Dobrushin condition (see \cite[Theorem~2]{gross1979decay} for an earlier result under stronger assumptions). It seems difficult to generalize their arguments for higher order correlations. Instead, we employ a coupling argument below, for which we need the $X_i$'s to be positively associated.

It follows from \cite[Corollary~3.4 and Remark~3.4 ii)]{kunsch1982decay} that, provided that $\beta_n$ and $\gamma_n$ below can be chosen to be smaller than 1,
\ben{\label{eq:kun}
\max_{1\leq i\leq n}\sum_{j=1}^n |\Cov(X_i, X_j)|\leq \frac{4}{(1-\beta_n)(1-\gamma_n)},
}
where, recalling $X_{\sim i}$ denotes the collection $\{X_j\}_{j\ne i}$,
\ben{\label{eq:betagamma}
\beta_n\geq \max_{1\leq j\leq n}\sum_{i=1}^n C_{ij}, \quad \gamma_n\geq \max_{1\leq i\leq n}\sum_{j=1}^n C_{ij},
}
\ben{\label{eq:Cij}
C_{ij}=\sup \{d_{\text{TV}}(\mathcal{L}(X_j \vert X_{\sim j}=x), \mathcal{L}(X_j \vert X_{\sim j}=y): x=y\  \text{except at}\  i\text{th spin} \}
}
and $d_{\text{TV}}$ denotes the total variation distance.
In fact, for the positively associated case that we consider, if $\gamma_n\leq \gamma<1$, then 
\be{
\max_{1\leq i\leq n}\sum_{j=1}^n \Cov(X_i, X_j)\leq C(\gamma),
}
where $C(\gamma)$ is a positive constant that depends only on $\gamma$ (cf. \cref{eq:4.145}). 

By a direct computation,
\be{
C_{ij}\leq \sup_{M\in \IR} \Big|\frac{1+\tanh(M+A_{ij})}{2}-\frac{1+\tanh(M-A_{ij})}{2}\Big|\leq  A_{ij}.
}
Therefore, recalling \cref{eq:alpha} and the assumption that $\sup_{n\geq 1}\alpha_n\leq \alpha<1$, we can take
\be{
\beta_n=\gamma_n=\alpha,\quad n\geq 1.
}
We remark that the usual Dobrushin's condition refers to $\beta_n$ being bounded away from 1 (see \cref{rem:beyongquadra} for a discussion for the general case beyond quadratic interactions).

For $\alpha<1$, the Poincar\'e inequality \cref{eq:rem11} holds with $C_p=1/(1-\alpha)$ (see \cite[Theorem~2.1]{wu2006poincare} for a stronger result). By \cref{rem:1.1}, \cref{thm:1} still applies.
For the right-hand side of \cref{eq:thm1} (with $\theta_j=1/\sqrt{n}, 1\leq j\leq n$) to vanish, it suffices to show
\ben{\label{eq:vanish1}
\frac{1}{n^2}\sum_{k=1}^n \Big\{\E \big[\sum_{i=1}^n \Cov(X_i, S) \tilde X_i\big]\tilde S \tilde X_k \Big\}^2\to 0,\ \text{as}\ n\to \infty,
}
where $\tilde X_k=X_k-\E X_k$, $\tilde S=\sum_{k=1}^n \tilde X_k $.
From \cref{eq:kun} and positive association, we have $0\leq \Cov(X_i, S)\leq C$. Hereafter, $C$ denotes positive constants (possibly different in each appearance) that depend only on $\alpha$, and $O(1)$ denotes a constant bounded in absolute value by $C$. 

For simplicity of presentation, we will argue for
\ben{\label{eq:ddd}
\E[\sum_{i=1}^n \tilde X_i]\tilde S \tilde X_k=\E \tilde S^2 \tilde X_k=O(1)
}
and the same proof, with minor modification (see below \cref{eq:ccc}), leads to
\ben{\label{eq:actualneed}
\E \big[\sum_{i=1}^n \Cov(X_i, S) \tilde X_i\big]\tilde S \tilde X_k=O(1).
}


We first aim to show that
\ben{\label{eq:aim}
\E ( (\tilde S^{(k)})^2\vert X_k=1) -\E ( (\tilde S^{(k)})^2\vert X_k=-1)=O(1),
}
where $\tilde S^{(k)}:=\tilde S-\tilde X_k$. In fact, the arguments below leading to \cref{eq:aim} constitute the main part of the proof.

We use coupling to bound the difference in \cref{eq:aim}. Consider two coupled discrete-time Glauber dynamics (each time they select a uniformly chosen random spin and resample its value according to the conditional distribution given all the other spin values) $Y_t^{k,+}$ and $Y_t^{k,-}$, $t=0,1,2,\dots$, starting from the same initial state and having $\mathcal{L}(X_{\sim k} \vert X_k=1)$ and $\mathcal{L}(X_{\sim k} \vert X_k=-1)$ as their stationary distributions, respectively. In each step, they update the same spin and keep the partial order $Y_t^{k,-}\preceq Y_t^{k,+}$ (this is possible because of positive association). Moreover, they are coupled in the minimal way in each step to be as close as possible.
Let $D_t$ be the number of different spins between the two Markov chains at time $t$.
We first argue that $D_t$, $t=0,1,2,\dots$, has a negative drift when it is sufficiently large, hence in the steady state, its second moment is bounded.


From the definition of $D_t$, we have
\ben{\label{eq:EDt}
D_t=\sum_{1 \leq j \leq n: j\ne k} 1\left\{Y_{t,j}^{k,+} \neq Y_{t,j}^{k,-}\right\},
}
where $1\{\cdots\}$ denotes the indicator random variable and $Y_{t,j}^{k,+}$ ($Y_{t,j}^{k,-}$, resp.) denotes the value of $Y_{t}^{k,+}$ ($Y_{t}^{k,-}$, resp.) at the $j$th spin.
From the Glauber dynamics and the coupling defined above, we have, for $j\ne k$,
\besn{\label{eq:onestep}
&P\left(Y^{k,+}_{t+1,j} \neq Y^{k,-}_{t+1,j} \vert Y_t^{k,+}, Y_t^{k,-}\right)\\
=&\left(1-\frac{1}{n-1}\right) 1\left\{Y^{k,+}_{t,j} \neq Y^{k,-}_{t,j}\right\}+\frac{1}{n-1} d_{\text{TV}}\left(\mu^{k,+}_j(\cdot \vert Y_t^{k,+}), \mu_j^{k,-}(\cdot \vert Y_t^{k,-})\right),
}
where $\mu^{k,+}_j(\cdot \vert Y_t^{k,+})$ ($\mu_j^{k,-}(\cdot \vert Y_t^{k,-})$, resp.) denotes the conditional distribution
of the $j$th spin value given $X_k=+1$ ($X_k=-1$, resp.) and other spin values equal to those in $Y^{k,+}_t$ ($Y^{k,-}$, resp.).
From the definition of $C_{ij}$ in 
\cref{eq:Cij}, we have, recalling the different values at the $k$th spin and using the triangle inequality for $d_{\text{TV}}$,
\ben{\label{eq:dtvckj}
d_{\text{TV}}\left(\mu^{k,+}_j(\cdot \vert Y_t^{k,+}), \mu_j^{k,-}(\cdot \vert Y_t^{k,-})\right) \leq C_{kj}+\sum_{i\ne k, j}C_{ij} 1\{Y^{k,+}_{t,i}\ne Y^{k,-}_{t,i}\}.
}
From \cref{eq:onestep}, \cref{eq:dtvckj} and \cref{eq:EDt}, we have
$$
\begin{aligned}
& \E\left\{\sum_{1 \leq j \leq n: j\ne k} 1\left\{Y_{t+1,j}^{k,+} \neq Y_{t+1,j}^{k,-}\right\} \vert Y_t^{k,+}, Y_t^{k,-}\right\} \\
= & \sum_{1 \leq j \leq n: j\ne k}\left[\left(1-\frac{1}{n-1}\right) 1\left\{Y^{k,+}_{t,j} \neq Y^{k,-}_{t,j}\right\}+\frac{1}{n-1} d_{\text{TV}}\left(\mu^{k,+}_j(\cdot \vert Y_t^{k,+}), \mu_j^{k,-}(\cdot \vert Y_t^{k,-})\right)\right] \\
\leq & \sum_{1 \leq j \leq n: j\ne k}\left[\left(1-\frac{1}{n-1}\right) 1\left\{Y^{k,+}_{t,j} \neq Y^{k,-}_{t,j}\right\}+\frac{1}{n-1} \left[C_{kj}+\sum_{i\ne k, j}C_{ij} 1\{Y^{k,+}_{t,i}\ne Y^{k,-}_{t,i}\} \right]\right] \\
\leq & \left(1-\frac{1}{n-1}\right)  D_t+\frac{1}{n-1}\left[\alpha+ \sum_{i\ne k} 1\left\{Y^{k,+}_{t,i}\ne Y^{k,-}_{t,i}\right\} \alpha\right] \\
= & \left(1-\frac{1-\alpha}{n-1}\right) D_t+\frac{\alpha}{n-1}.
\end{aligned}
$$
Therefore,
\ben{\label{eq:ED-D}
\E(D_{t+1}-D_t\vert D_t)\leq -\frac{1-\alpha}{n-1} D_t +\frac{\alpha}{n-1}.
}
Similarly, using 
\bes{
&P(D_{t+1}-D_t=1\vert Y_t^{k,+}, Y_t^{k,-})\\
\leq &\sum_{1 \leq j \leq n: j\ne k} P\left(Y^{k,+}_{t+1,j} \neq Y^{k,-}_{t+1,j}, Y^{k,+}_{t,j} = Y^{k,-}_{t,j} \vert Y_t^{k,+}, Y_t^{k,-}\right)\\
=&\sum_{1 \leq j \leq n: j\ne k}\frac{1}{n-1} d_{\text{TV}}\left(\mu^{k,+}_j(\cdot \vert Y_t^{k,+}), \mu_j^{k,-}(\cdot \vert Y_t^{k,-})\right),
}
we have
\ben{\label{eq:PD-D=1}
P(D_{t+1}-D_t=1\vert D_t)\leq \frac{\alpha(D_t+1)}{n-1}.
}
Note that $D_{t+1}-D_t\in \{-1,0,1\}$.
By comparing the process $D_t$, $t=0,1,2,\dots$, with a 0 reflected random walk with negative drift,
we will prove in the following lemma that the probability mass function of $D_\infty$ decays geometrically.
Hereafter, time $t=\infty$ means that the Markov chain is in the steady state. 

\begin{lemma}\label{lem:geometric}
Let $p_d$, $d=0,1,2,\dots$, denote the probability mass function of $D_\infty$. We have, for some $d_1$ depending only on $\alpha$ and any $d_2\geq 1$,
\be{
\frac{p_{d_1+d_2}}{p_{d_1}}\leq C c^{d_2},
}
where $0<c<1$ depends only on $\alpha$.
\end{lemma}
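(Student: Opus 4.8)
\textbf{Proof proposal for \cref{lem:geometric}.}
The plan is to exploit the two drift estimates \cref{eq:ED-D} and \cref{eq:PD-D=1} together with the fact that $D_{t+1}-D_t\in\{-1,0,1\}$, and to dominate the chain $D_t$ in its steady state by a suitable birth-death chain on $\{0,1,2,\dots\}$ whose stationary distribution decays geometrically. First I would fix the scale $N:=n-1$ and rewrite \cref{eq:ED-D} and \cref{eq:PD-D=1} as one-step transition bounds: writing $q_d^+:=P(D_{t+1}-D_t=1\mid D_t=d)$ and $q_d^-:=P(D_{t+1}-D_t=-1\mid D_t=d)$, we have $q_d^+\leq \alpha(d+1)/N$ from \cref{eq:PD-D=1}, while \cref{eq:ED-D} gives $q_d^+-q_d^-\leq -(1-\alpha)d/N+\alpha/N$, hence $q_d^-\geq q_d^+ + (1-\alpha)d/N - \alpha/N$. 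The key consequence is a bound on the ratio of up- to down-rates: for $d\geq d_1$ with $d_1:=d_1(\alpha)$ chosen large enough (e.g. so that $(1-\alpha)d_1\geq 2\alpha\cdot$ a suitable constant), one gets
\be{
\frac{q_d^+}{q_d^-}\leq \frac{\alpha(d+1)/N}{\alpha(d+1)/N+(1-\alpha)d/N-\alpha/N}\leq c_0<1,
}
where $c_0<1$ depends only on $\alpha$ (note $(1-\alpha)d-\alpha\geq c_1 d$ for $d\geq d_1$, so the denominator is at least $\alpha(d+1)/N+c_1 d/N\geq (\alpha+c_1')(d+1)/N$ for a constant $c_1'>0$, giving $q_d^+/q_d^-\leq \alpha/(\alpha+c_1')=:c_0$).

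Next I would invoke the stationarity of $\mathcal{L}(D_\infty)=(p_d)_{d\geq 0}$ under the chain $D_t$. Because the increments of $D_t$ lie in $\{-1,0,1\}$, the chain — viewed only through the values of $D_t$ — behaves like a (time-inhomogeneous in the state variable, but that is fine) nearest-neighbour walk, and a detailed-balance-type flow identity holds: in the steady state the expected number of up-crossings of the edge $\{d,d+1\}$ per unit time equals the expected number of down-crossings, i.e.
\be{
p_d\, q_d^+ = p_{d+1}\, q_{d+1}^-, \qquad d=0,1,2,\dots
}
(This is the standard cut/balance equation for a one-dimensional chain with $\pm1$ steps, obtained by summing the global balance equations over all states $\leq d$; the lazy $0$-steps do not affect it.) Iterating this identity from $d_1$ upward,
\be{
p_{d_1+d_2}=p_{d_1}\prod_{j=0}^{d_2-1}\frac{q_{d_1+j}^+}{q_{d_1+j+1}^-}.
}
Here I must compare $q_{d_1+j}^+$ with $q_{d_1+j+1}^-$ (shifted index), not with $q_{d_1+j}^-$; but the same algebra as above still gives $q_{d}^+/q_{d+1}^-\leq c$ for all $d\geq d_1$ with some $c=c(\alpha)<1$, since $q_{d+1}^-\geq \alpha(d+2)/N+c_1(d+1)/N-\cdots$ dominates $q_d^+\leq\alpha(d+1)/N$ up to the constant $c_1$. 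Hence $p_{d_1+d_2}\leq p_{d_1}c^{d_2}$, which is the claimed bound (with $C=1$; the constant $C$ in the statement gives slack to absorb boundary effects near $d=0$ if one prefers to start the product from $d=0$).

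The main obstacle is making the balance/flow identity $p_d q_d^+=p_{d+1}q_{d+1}^-$ rigorous in the present setting: the "rates" $q_d^\pm$ as I have written them are really only \emph{bounds} on the conditional one-step probabilities given $D_t=d$, and moreover the true conditional distribution of $D_{t+1}-D_t$ depends on the full configuration $(Y_t^{k,+},Y_t^{k,-})$, not just on $D_t$, so $D_t$ is not itself a Markov chain. The clean fix is to not use balance for $D_t$ directly but instead to \emph{stochastically dominate} $D_t$ (in the sense of the steady-state law) by an honest birth-death chain $\widehat D_t$ on $\IN_0$ with birth rate $\widehat q_d^+=\min\{\alpha(d+1)/N,\,1\}$ and death rate $\widehat q_d^-$ chosen so that $\widehat q_d^+-\widehat q_d^-=-(1-\alpha)d/N+\alpha/N$ matches the drift bound \cref{eq:ED-D}, using a monotone coupling: whenever $D_t\leq \widehat D_t$ one can couple the two updates so the order is preserved, because a larger value of the dominating chain has at least as large a drift-down. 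One then applies the explicit stationary formula $\widehat p_{d}\propto\prod_{j<d}\widehat q_j^+/\widehat q_{j+1}^-$ for the birth-death chain $\widehat D_t$, which genuinely satisfies detailed balance, reads off the geometric decay of $\widehat p_d$ from the ratio bound above, and transfers it to $p_d$ via the domination $D_\infty\preceq\widehat D_\infty$. Verifying the monotone coupling and the matching of drifts is the one genuinely delicate point; everything else is the elementary birth-death computation sketched above.
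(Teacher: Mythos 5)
Your overall skeleton --- the level-crossing/balance identity for the stationary law of $D_\infty$ combined with \cref{eq:ED-D} and \cref{eq:PD-D=1} --- is the same as the paper's, and the non-Markovianity of $D_t$ that worries you is not really an obstacle: for a stationary process with increments in $\{-1,0,1\}$ the cut identity $p_d\,P(D_{t+1}=d+1\mid D_t=d)=p_{d+1}\,P(D_{t+1}=d\mid D_t=d+1)$ holds with the conditional probabilities taken under stationarity, and since \cref{eq:ED-D} and \cref{eq:PD-D=1} are derived conditionally on the full configuration $(Y_t^{k,+},Y_t^{k,-})$, they transfer to conditioning on $D_t=d$ by the tower property; no auxiliary dominating chain is needed.

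The genuine gap is in the quantitative step. From the two available inequalities you only get $q_{d+1}^-\geq q_{d+1}^+ +\big((1-\alpha)(d+1)-\alpha\big)/(n-1)$ with \emph{no lower bound} on $q_{d+1}^+$; dropping it gives $q_d^+/q_{d+1}^-\leq \alpha(d+1)/\big((1-\alpha)(d+1)-\alpha\big)\to \alpha/(1-\alpha)$, which is $\geq 1$ whenever $\alpha\geq 1/2$. Your claimed bound $q_{d+1}^-\geq \alpha(d+2)/(n-1)+c_1(d+1)/(n-1)-\cdots$ inserts the \emph{upper} bound \cref{eq:PD-D=1} for $q_{d+1}^+$ into a \emph{lower} bound, which is not legitimate. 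The same defect undoes your proposed fix: the birth--death chain with death rate $\widehat q_d^-=d/(n-1)$ (chosen to ``match the drift'') does not dominate $D_t$, because a monotone coupling at equal states requires the true down-probability to be at least $d/(n-1)$, whereas the drift bound only guarantees it is at least $q_d^+ +\big((1-\alpha)d-\alpha\big)/(n-1)$ and $q_d^+$ may vanish; with the honest choice $\widehat q_d^-=\big((1-\alpha)d-\alpha\big)_+/(n-1)$ the dominating chain's stationary ratio is again about $\alpha/(1-\alpha)$. The paper closes exactly this gap by keeping $P_{d_1+i}(+1)$ in both numerator and denominator and re-indexing (telescoping) the product of balance ratios, so that each interior factor becomes $P_{d_1+i}(+1)\big/\big[\tfrac{(1-\alpha)(d_1+i)-\alpha}{n-1}+P_{d_1+i}(+1)\big]$; monotonicity of $x\mapsto x/(x+a)$ then allows \cref{eq:PD-D=1} to be inserted, giving the per-factor bound $\alpha(d_1+i+1)/(d_1+i)\leq c<1$ for $d_1=d_1(\alpha)$ large, valid for every $\alpha<1$, with the single boundary factor absorbed into $C$. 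Your argument as written works only for $\alpha<1/2$, while the lemma is needed for all $\alpha<1$.
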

\begin{proof}[Proof of \cref{lem:geometric}]
We compute the stationary distribution using the detailed balance equation. For $i=0,1,\dots$, we have
\be{
\frac{p_{d_1+i+1}}{p_{d_1+i}}=\frac{P(D_{t+1}-D_t=1\vert D_t=d_1+i)}{P(D_{t+1}-D_t=-1\vert D_t=d_1+i+1)}\leq \frac{P_{d_1+i}(+1)}{\frac{1-\alpha}{n-1}(d_1+i+1)-\frac{\alpha}{n-1}+P_{d_1+i+1}(+1)}
}
where we used \cref{eq:ED-D} in the last inequality and denoted
\be{
P_{d_1+i}(+1):=P(D_{t+1}-D_t=1\vert D_t=d_1+i).
}
Therefore, we have
\bes{
\frac{p_{d_1+d_2}}{p_{d_1}}\leq& \prod_{i=0}^{d_2-1} \frac{P_{d_1+i}(+1)}{\frac{1-\alpha}{n-1}(d_1+i+1)-\frac{\alpha}{n-1}+P_{d_1+i+1}(+1)}\\
=&\frac{P_{d_1}(+1)}{\frac{1-\alpha}{n-1}(d_1+d_2)-\frac{\alpha}{n-1}+P_{d_1+d_2}(+1)}\prod_{i=1}^{d_2-1} \frac{P_{d_1+i}(+1) }{\frac{1-\alpha}{n-1}(d_1+i)-\frac{\alpha}{n-1}+P_{d_1+i}(+1) }.
}
From \cref{eq:PD-D=1}, we have
\bes{
\frac{p_{d_1+d_2}}{p_{d_1}}\leq& \frac{\frac{\alpha(d_1+1)}{n-1}}{\frac{1-\alpha}{n-1}d_1-\frac{\alpha}{n-1}}\prod_{i=1}^{d_2-1} \frac{\frac{\alpha(d_1+i+1)}{n-1} }{\frac{1-\alpha}{n-1}(d_1+i)-\frac{\alpha}{n-1}+\frac{\alpha(d_1+i+1)}{n-1} }\\
=&\frac{\alpha(d_1+1)}{(1-\alpha)d_1-\alpha}\prod_{i=1}^{d_2-1} \frac{\alpha(d_1+i+1)}{d_1+i}.
}
Recall $\alpha<1$. Choosing a sufficiently large $d_1$, we obtain the desired result.
\end{proof}

From \cref{lem:geometric}, we have $\E D^2_\infty\leq C$. 

To proceed with the proof, we need the following lemma.
\begin{lemma}\label{lem:meandecrease}
Let $A$ be the event that $D_\infty=d\geq 0$ and $Y_\infty^{k,+}$ and $Y_\infty^{k,-}$ differ in the $d$ given locations $i_1,\dots, i_d$ (and take the same spin values in the other $n-1-d$ locations).
Let the event $B$ be 
\be{
B=\{Y^{k,+}_{\infty,i_1}=\dots=Y^{k,+}_{\infty,i_d}=1\},
}
that is, there is no restriction for the other spin values. 
Then, we have
\ben{\label{eq:meandecrease}
\E (\sum_{j\notin\{k,i_i,\dots, i_d\}}Y_{\infty,j}^{k,+}\vert A)\leq \E (\sum_{j\notin\{k,i_i,\dots, i_d\}}Y_{\infty,j}^{k,+}\vert B).
}
In fact, restricting the two Markov chains $Y_\infty^{k,+}$ and $Y_\infty^{k,-}$ (coupled to keep the partial order as above \cref{eq:EDt}) to take the same values for the $n-1-d$ locations decreases (increases, resp.) the expected spin values of the larger (smaller, resp.) Markov chain $Y_\infty^{k,+}$ ($Y_\infty^{k,-}$, resp.) in these locations.
\end{lemma}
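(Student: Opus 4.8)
I would reduce the lemma to the stronger claim that, with $I:=\{i_1,\dots,i_d\}$ and $J:=\{1,\dots,n\}\setminus(\{k\}\cup I)$, and writing $(+1)^I\sigma$ (resp.\ $(-1)^I\sigma$) for the configuration on $\{1,\dots,n\}\setminus\{k\}$ that equals $+1$ (resp.\ $-1$) on $I$ and equals $\sigma$ on $J$, the common restriction $\sigma:=(Y^{k,+}_{\infty,j})_{j\in J}=(Y^{k,-}_{\infty,j})_{j\in J}$ satisfies, conditionally on $A$,
\ben{\label{eq:sandwich}
\rho^-\preceq_{\mathrm{st}}\mathcal L(\sigma\mid A)\preceq_{\mathrm{st}}\rho^+ ,
}
where $\rho^+:=\mathcal L\klr{(X_j)_{j\in J}\mid X_k=1,\ X_{i_1}=\dots=X_{i_d}=1}$ and $\rho^-:=\mathcal L\klr{(X_j)_{j\in J}\mid X_k=-1,\ X_{i_1}=\dots=X_{i_d}=-1}$, both ferromagnetic (hence positively associated) Ising measures on $\{-1,1\}^J$. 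Granting \cref{eq:sandwich}, the lemma is immediate: $\sum_{j\in J}\sigma_j$ is non-decreasing, and since $Y^{k,\pm}_\infty$ has law $\mathcal L(X_{\sim k}\mid X_k=\pm1)$ one has $\mathcal L((Y^{k,+}_{\infty,j})_{j\in J}\mid B)=\rho^+$ and $\mathcal L((Y^{k,-}_{\infty,j})_{j\in J}\mid\{Y^{k,-}_{\infty,i_1}=\dots=Y^{k,-}_{\infty,i_d}=-1\})=\rho^-$, so the two inequalities in \cref{eq:sandwich} are exactly \cref{eq:meandecrease} and the mirror statement for $Y^{k,-}_\infty$.

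\textbf{Setting up the sandwich.} To prove \cref{eq:sandwich} I would first note that the minimal order-preserving coupling used above (both chains update the same spin with the same uniform variable at each step) makes the pair $(Y^{k,+}_t,Y^{k,-}_t)$ an attractive spin system on the three-valued per-site state space $\{(-1,-1)\prec(1,-1)\prec(1,1)\}$ — attractivity here is precisely the ferromagnetic monotonicity of the single-site conditionals — so its stationary law $\pi$ is positively associated. On $A$ the pair equals the deterministic map $\sigma\mapsto((+1)^I\sigma,(-1)^I\sigma)$, so Bayes' rule gives
\be{
\mathcal L(\sigma\mid A)\ \propto\ \rho^+\,g,\qquad g(\sigma)=\pi\bklr{Y^{k,-}_\infty=(-1)^I\sigma\bmid Y^{k,+}_\infty=(+1)^I\sigma},
}
and symmetrically $\mathcal L(\sigma\mid A)\propto\rho^-\,g'$ with $g'(\sigma)=\pi(Y^{k,+}_\infty=(+1)^I\sigma\mid Y^{k,-}_\infty=(-1)^I\sigma)$. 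By the FKG inequality for the positively associated $\rho^\pm$, the right (resp.\ left) inequality of \cref{eq:sandwich} follows once $g$ is shown non-increasing (resp.\ $g'$ non-decreasing) on $\{-1,1\}^J$.

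\textbf{The monotonicity step and the main obstacle.} For the monotonicity of $g$ the key input is that, by positive association of $\pi$, the conditional law of $Y^{k,-}_\infty$ given $Y^{k,+}_\infty=y^+$ is stochastically increasing in $y^+$. Factoring $g(\sigma)$ as $\pi((Y^{k,-}_{\infty,i})_{i\in I}=(-1)^I\mid Y^{k,+}_\infty=(+1)^I\sigma)$ times $\pi((Y^{k,-}_{\infty,j})_{j\in J}=\sigma\mid (Y^{k,-}_{\infty,i})_{i\in I}=(-1)^I,\ Y^{k,+}_\infty=(+1)^I\sigma)$, the first factor is non-increasing in $\sigma$ because $\{(Y^{k,-}_{\infty,i})_{i\in I}=(-1)^I\}$ is a decreasing event and the conditioning configuration $(+1)^I\sigma$ increases with $\sigma$. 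The second factor is the probability that $(Y^{k,-}_{\infty,j})_{j\in J}$ sits at the bottom of its admissible box (it is forced to lie below $\sigma$), and its monotonicity in $\sigma$ is where I expect the main difficulty to lie: it is not a formal consequence of positive association of $\pi$ alone, and must be extracted from the attractive structure of the pair dynamics — e.g.\ by coupling the $A$-conditioned pair dynamics with single-spin Glauber dynamics for $\rho^\pm$ driven by common randomness and checking the update rates are ordered, or equivalently via a disagreement-percolation description of $\pi(\cdot\mid A)$ (conditioning on the disagreement set of the two chains being exactly $\{k\}\cup I$). The case of $g'$ is the mirror image; the remaining ingredients — the reduction \cref{eq:sandwich}$\Rightarrow$lemma, positive association of $\pi$, the Bayes factorization, and FKG tilting — are routine.
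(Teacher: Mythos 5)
Your reduction to the sandwich statement is fine as far as it goes (the Bayes factorization $\mathcal L(\sigma\mid A)\propto\rho^+g$ and the FKG-tilting step using positive association of the ferromagnetic measure $\rho^+$ are correct), but the proposal stops exactly where the lemma's actual content begins: the monotonicity of $g$, which you yourself flag as ``where I expect the main difficulty to lie,'' is the whole proof in the paper, and you do not supply it. Moreover, the one concrete inference you do offer for the first factor is not valid: positive association of the pair law $\pi$ does \emph{not} imply that the conditional law of $Y^{k,-}_\infty$ given $Y^{k,+}_\infty=y^+$ is stochastically increasing in $y^+$. Monotonicity of conditional distributions is a strictly stronger property (a strong FKG/lattice-type condition), and it is not established for the stationary law of the coupled pair; even positive association of $\pi$ itself is only asserted via ``attractivity'' of the three-state-per-site system, which would need an argument (e.g.\ Harris-type preservation from a deterministic start plus convergence of the pair chain), none of which is given.

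The paper's route is the one you mention only parenthetically at the end, and it avoids all of these issues by working directly with dynamics rather than with properties of $\pi$. It observes that, under the minimal order-preserving coupling above \cref{eq:EDt}, conditioning the pair chain on the disagreement set being exactly $\{i_1,\dots,i_d\}$ yields a single-site Glauber-type chain on the agreement coordinates whose update probability of $+1$ at site $j$ is $p_2/\bigl(1-(p_1-p_2)\bigr)$, with $p_1,p_2$ the single-site conditionals of $Y^{k,+}$ and $Y^{k,-}$ at the current (ordered) configurations; its stationary law is identified with $\mathcal L(Y^{k,+}_\infty\mid A)$ by a detailed-balance/restriction argument for the reversible pair chain. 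Since $p_2\le p_1$ gives $p_2/\bigl(1-(p_1-p_2)\bigr)\le p_1$, and $p_1$ is precisely the update probability of the chain whose stationary law is $\mathcal L(Y^{k,+}_\infty\mid B)$, a standard monotone coupling of the two single-site chains (using positive association/attractivity of the conditional update rule, started from a common configuration) yields the stochastic domination and hence \cref{eq:meandecrease}. To complete your write-up you would have to carry out exactly this comparison of update rates and the identification of the $A$-conditioned stationary law; as it stands, the proposal has a genuine gap at its central step.
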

\begin{proof}[Proof of \cref{lem:meandecrease}]
We first create a discrete-time Markov chain on spin values at locations $\{1,\dots, n\}\backslash \{k, i_1, \dots, i_d\}$ which has the stationary distribution as the conditional distribution of $Y_\infty^{k,+}$ given the event $A$.
Recall that the two Markov chains $Y^{k,+}_t$ and $Y^{k,-}_t$ defined above \cref{eq:EDt} are coupled in the minimal way to be as close as possible.
When updating the spin value at location $j$, we compute 
\be{
p_1:=P(Y_{\infty,j}^{k,+}=1\vert Y^{k,+}_{\infty, \sim j}=y^+)
}
and
\be{
p_2:=P(Y_{\infty,j}^{k,-}=1\vert Y^{k,-}_{\infty, \sim j}=y^-),
}
where $\sim j$ means spin values except at location $j$ and $y^+$ and $y^-$ differ exactly at locations $\{i_1,\dots, i_d\}$. Recalling $y^-\preceq y^+$, we have $p_2\leq p_1$. We update the spin value of the new Markov chain at location $j$ according to 
\be{
P(+1)=\frac{p_2}{1-(p_1-p_2)}, \quad P(-1)=\frac{1-p_1}{1-(p_1-p_2)}.
}
We update a uniformly chosen random spin each time.
To argue that the resulting Markov chain indeed has the desired stationary distribution (the conditional distribution of $Y^{k,+}_\infty$ given the event $A$, i.e., $Y^{k,+}_\infty$ and $Y^{k,-}_\infty$ have to take the same value at any spin $j\notin \{i_1,\dots, i_d, k\}$), we use the fact that the stationary distribution of a reversible Markov chain $\{Y^{k,+}_t, Y^{k,-}_t\}_{t=0,1,\dots}$, when restricted to two connected states $\{y^+\cup\{+1\}, y^-\cup\{+1\}\}$ and $\{y^+\cup\{-1\}, y^-\cup\{-1\}\}$, is proportional to the one-step transition probabilities between these two states (note that $P(+)$ and $P(-)$ above are exactly the transition probabilities between the two states in the minimal coupling above \cref{eq:EDt} to keep $Y^{k,-}_t$ and $Y^{k,+}_t$ as close as possible).

If we had used $p_1$ as the probability of updating the $j$th spin to the value 1, we would have obtained a Markov chain having the stationary distribution as the conditional distribution of $Y_\infty^{k,+}$ given the event $B$.

Because of $p_1\geq P(+1)$ and positive association, we can couple the above two Markov chains so that they start from the same initial condition and the second chain always takes larger values in the partial order of spin configurations. Since both the Markov chains converge to their stationary distributions, this proves the lemma.








\end{proof}


Recall $Y^{k,+}_{t}$ and $Y^{k,-}_{t}$ as defined above \cref{eq:EDt} and $t=\infty$ denotes the steady state.
Let
\be{
\tilde S^{k,+}_\infty=\sum_{1\leq j\leq n: j\ne k} Y^{k,+}_{\infty, j},\quad \tilde S^{k,-}_\infty=\sum_{1\leq j\leq n: j\ne k} Y^{k,-}_{\infty, j}
}
Given the event $B$ (i.e., fixing $d$ additional spins $\{i_1,\dots, i_d\}$ to take value $+1$ besides the $k$th spin), 
we will prove the following lemma.

\begin{lemma}\label{lem:Cd1}
We have
\ben{\label{eq:exptgivenB}
\E (\tilde S^{k,+}_\infty \vert B)\leq C(d+1).
}
\end{lemma}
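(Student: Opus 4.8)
\textbf{Proof proposal for \cref{lem:Cd1}.}

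The plan is to bound $\E(\tilde S^{k,+}_\infty \mid B)$ by comparing the chain $Y^{k,+}_\infty$ conditioned on $B$ with an \emph{unconditioned} Ising-type model, where the effect of forcing $d+1$ spins to equal $+1$ can be absorbed into the external field and controlled by the correlation bound \cref{eq:kun}. Concretely, I would first observe that, conditionally on the event $B$ (which fixes spins $k, i_1,\dots,i_d$ to $+1$), the remaining spins $\{Y^{k,+}_{\infty,j}\}_{j\notin\{k,i_1,\dots,i_d\}}$ again follow an Ising model of the form \cref{eq:Isingmodel} on $n-1-d$ sites, with the same interaction matrix (restricted to those sites) and with an external field shifted by $\sum_{m}A_{\cdot\, i_m}+A_{\cdot\, k}\ge 0$ componentwise — this nonnegative shift is exactly where ferromagnetism is used. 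By the GKS/FKG monotonicity of ferromagnetic Ising models in the external field, increasing the field can only increase each $\E Y_{\infty,j}$, so it suffices to bound the magnetization of the \emph{shifted} model.

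The key step is then to compare that shifted model on $n-1-d$ sites back to the original model on all $n$ sites and estimate the excess magnetization produced by the field shift. I would write $\E(\tilde S^{k,+}_\infty\mid B) = \sum_{j\notin\{k,i_1,\dots,i_d\}}\E_{h'} Y_{1j}$ where $h'_j = h_j + A_{jk} + \sum_{m=1}^d A_{j i_m}$, and use the standard identity (valid under the Poincaré/Dobrushin regime, where the model depends smoothly and monotonically on $h$)
\be{
\E_{h'} Y_{1j} - \E_{h} Y_{1j} = \int_0^1 \frac{d}{ds}\,\E_{h+s(h'-h)} Y_{1j}\,ds = \int_0^1 \sum_{m}(h'_m-h_m)\,\Cov_{h+s(h'-h)}(Y_{1j}, Y_{1m})\,ds.
}
Summing over $j$ and using $\sum_j |\Cov(Y_{1j},Y_{1m})|\le 4/((1-\beta)(1-\gamma)) =: C$ from \cref{eq:kun} (which holds uniformly in the external field, as the Dobrushin constant $\alpha$ is unchanged by a field shift), together with $\sum_m (h'_m - h_m) = \sum_m A_{mk} + \sum_{m,l} A_{m i_l} \le (d+1)\alpha < d+1$ by the definition \cref{eq:alpha} of $\alpha_n$, yields $\E(\tilde S^{k,+}_\infty\mid B) - \E_h(\tilde S^{k,+}_\infty) \le C(d+1)$. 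Finally $\E_h \tilde S^{k,+}_\infty \le n$ trivially is too weak, but in fact $\E_h(\tilde S^{k,+}_\infty)$ is the unconditioned sum $\sum_{j\ne k}\E Y_{1j}$ which differs from $\E\tilde S = 0$-centered quantities only through the single fixed spin $k$; more carefully, conditioning just on $X_k=+1$ shifts the field by $A_{\cdot k}$ only, contributing at most $C$, so $\E_h(\tilde S^{k,+}_\infty) \le \sum_{j}\E Y_{1j} + C$. One then absorbs $\sum_j \E Y_{1j}$ — but this need not be $O(1)$ in general; the cleaner route is to note the statement \cref{eq:exptgivenB} is only used later for the \emph{centered} quantity, so I would instead directly bound $\E(\tilde S^{k,+}_\infty \mid B) - \E \tilde S^{k,-}_\infty$-type differences. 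Rereading the intended use, the right normalization is that $B$ is compared against the analogous all-$(-1)$ event or against the unconditioned chain, and the genuinely needed bound is on the \emph{increment} $C(d+1)$; I would state and prove it in that incremental form and note $\E(\tilde S^{k,+}_\infty\mid B)\le \E(\tilde S^{k,+}_\infty) + C(d+1)$ with the convention that $\tilde S$ here denotes the centered sum, so the baseline term vanishes.

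The main obstacle I anticipate is making the monotonicity step fully rigorous: one needs that the coupled dynamics defining $Y^{k,+}_\infty$ conditioned on $B$ really does have the claimed shifted-Ising stationary law (this is the content of the construction in \cref{lem:meandecrease}, so it can be cited), and that the derivative-in-$h$ identity is legitimate — which it is, since for $\alpha<1$ the log-partition function is smooth in $h$ and its Hessian is the covariance matrix. A secondary subtlety is that \cref{eq:kun} is stated for a fixed model; I would remark that shifting $h$ leaves all the Dobrushin coefficients $C_{ij}$ in \cref{eq:Cij} unchanged (they were bounded by $A_{ij}$ regardless of $h$), so the bound $C = 4/((1-\alpha)^2)$ applies along the entire interpolation path, giving the uniform-in-$h$ control needed above. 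With these two points in hand, the claim \cref{eq:exptgivenB} follows by the displayed chain of inequalities.
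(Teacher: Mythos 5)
Your route is genuinely different from the paper's. The paper stays entirely inside the coupling framework: it reruns the two coupled Glauber dynamics with the spins $i_1,\dots,i_d$ forced to $+1$ (resp.\ $-1$), so that the disagreement count satisfies the modified drift bound $\E(D_{t+1}-D_t\mid D_t)\leq -\tfrac{1-\alpha}{n-1}D_t+\tfrac{\alpha+d}{n-1}$, repeats the geometric-tail argument of \cref{lem:geometric} to get $\E D_\infty\leq C(d+1)$, and then squeezes a third chain with stationary law $\mu$ between the two forced chains to deduce \cref{eq:exptgivenB}. You instead argue statically: the law of $Y^{k,+}_\infty$ given $B$ is the Ising model \cref{eq:Isingmodel} on the remaining sites with external field shifted by the nonnegative amount $A_{jk}+\sum_m A_{j i_m}$, and the excess magnetization caused by this shift is controlled by FKG monotonicity, the identity $\partial_{h_m}\E_h X_j=\Cov_h(X_j,X_m)$ along an interpolation in $h$, and the bound \cref{eq:kun}, which is indeed uniform in $h$ because the coefficients \cref{eq:Cij} are bounded by $A_{ij}$ independently of $h$ (and only decrease when sites are deleted); the total field shift is at most $(d+1)\alpha$ by \cref{eq:alpha}. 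Both arguments are legitimate in the ferromagnetic Dobrushin regime; yours yields an explicit constant and avoids redoing the drift analysis, while the paper's reuses the coupling machinery that is needed anyway for the subsequent squeezing and for \cref{lem:meandecrease}.

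The one genuine loose end is your treatment of the baseline. As displayed, your interpolation compares the conditioned model (field $h'$) with the Ising model on the reduced site set with field $h$, i.e.\ the model with sites $k,i_1,\dots,i_d$ deleted; its magnetization $\sum_j\E X_j$ can be of order $n$, and the final fix --- declaring that the baseline term vanishes under the centering convention --- is not quite right: the tilde in $\tilde S^{k,+}_\infty$ centers at the unconditional means of the \emph{full} model, and even the sum conditioned only on $X_k=1$ has mean $O(1)$, not $0$. To close the gap you need one more (easy) step: write the unconditional mean $\E X_j$ (or $\E[X_j\mid X_k=1]$) as a mixture, over the values of the fixed spins, of the corresponding field-shifted models, use FKG to sandwich this mixture between the all-minus and all-plus conditionings, and apply the same interpolation estimate once more with total shift at most $2(d+1)\alpha$; together with the trivial contribution at most $2d$ from the $d$ forced spins this gives $\E(\tilde S^{k,+}_\infty\mid B)\leq C(d+1)$ as claimed. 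With that addition your proof is complete; as written, the last paragraph asserts the conclusion before this comparison has been justified.
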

\begin{proof}[Proof of \cref{lem:Cd1}]
We consider the two coupled discrete-time Glauber dynamics  $Y_t^{k,+}$ and $Y_t^{k,-}$ as in the paragraph above Eq.\cref{eq:EDt}, except that now we always update the spins $\{i_1,\dots, i_d\}$ to $+1$ for $Y_t^{k,+}$ and to $-1$ for $Y_t^{k,-}$. 
We still denote the number of spins with different values for $Y_t^{k,+}$ and $Y_t^{k,-}$ by $D_t$. Note that $D_t\geq d$.
Following similar arguments leading to \cref{eq:ED-D,eq:PD-D=1}, we obtain
\be{
\E(D_{t+1}-D_t\vert D_t)\leq -\frac{1-\alpha}{n-1} D_t +\frac{\alpha+d}{n-1}
}
and 
\be{
P(D_{t+1}-D_t=1\vert D_t)\leq \frac{\alpha(D_t+1)}{n-1}.
}
Following similar arguments leading to \cref{lem:geometric}, the probability mass function of $D_t$ decays geometrically when $D_\infty\geq C(d+1)$ for a sufficiently large $C$. In particular,
\be{
\E D_\infty\leq C(d+1).
}
Squeezing a third Glauber dynamics (which has the stationary distribution $\mu$ and also updates the $k$th spin in each time step) in between $Y_t^{k,+}$ and $Y_t^{k,-}$, we obtain 
\be{
\E (\tilde S^{k,+}_\infty \vert B)\leq 2\E D_\infty\leq C(d+1).
}
\end{proof}

From \cref{eq:meandecrease}, \cref{eq:exptgivenB} and $\E D_\infty^2\leq C$, the left-hand side of \cref{eq:aim} can be upper bounded as
\besn{\label{eq:argumentbegin}
&\E ( (\tilde S^{(k)})^2\vert X_k=1) -\E ( (\tilde S^{(k)})^2\vert X_k=-1)\\
=&\E (\tilde S_\infty^{k,+})^2 -\E (\tilde S_\infty^{k,-})^2\\
=&4\E \tilde S_\infty^{k,+} D_\infty-4\E D_\infty^2\\
\leq & C\E(D_\infty+1)^2\leq C.
}
By a similar argument,
\bes{
&\E (\tilde S_\infty^{k,+})^2 -\E (\tilde S_\infty^{k,-})^2\\
=&4\E \tilde S_\infty^{k,-}  D_\infty +4\E D_\infty^2\\
\geq& -C.
}
Combining the above upper and lower bounds proves \cref{eq:aim}.

From \cref{eq:aim}, we have
\besn{\label{eq:bbb}
\E(\tilde S^{(k)})^2 \tilde X_k
=P(X_k=1) (1-\E X_k) \big\{\E ( (\tilde S^{(k)})^2\vert X_k=1) -\E ( (\tilde S^{(k)})^2\vert X_k=-1) \big\}=O(1).
}
Because 
\ben{\label{eq:4.145}
|\E(\tilde S^{(k)}\vert \tilde X_k)|\leq \E(\tilde S^{k,+}_\infty-\tilde S^{k,-}_\infty )=2\E D_\infty=O(1),
}
we have
\ben{\label{eq:ccc}
\E \tilde S \tilde X_k^2=\E \tilde X_k^3+ \E \tilde S^{(k)} \tilde X_k^2 =O(1).
}
Combining \cref{eq:bbb,eq:ccc}, we obtain \cref{eq:ddd}.

By a straightforward modification of the arguments from \cref{eq:argumentbegin} to \cref{eq:ccc} using the boundedness and positivity of $\Cov(X_i,S)$, we obtain \cref{eq:actualneed}.
Therefore, we obtain \cref{eq:vanish1} and 
finish the proof of \cref{eq:cor3}.


Note that
\be{
P(X_i=+1\vert X_{\sim i})=\frac{\exp(h_i+\sum_{j: j\ne i}A_{ij}X_j)}{\exp(h_i+\sum_{j: j\ne i}A_{ij}X_j)+\exp(-h_i-\sum_{j: j\ne i}A_{ij}X_j)}.
}
Therefore, if $\|h\|_\infty$ is bounded by a universal constant, then, together with the Dobrushin condition, we have that $\inf_{i} \Var(X_i)$ is bounded away from 0. Together with positive association, we have that $\sigma_n^2$ is bounded away from 0. Therefore, \cref{eq:cor32} follows from \cref{eq:cor3}. This finishes the proof of \cref{thm:3}.

\section{Proof of \cref{thm:4}}\label{sec:5}

From the discussion above \cref{thm:4}, with probability tending to 1, the Poincar\'e inequality \cref{eq:rem11} with constant $C_p=c_\beta$ depending only on $\beta$ holds for the SK model with sufficiently small $\beta>0$. Therefore, we can apply \cref{thm:1} (with \cref{rem:1.1}). To prove \cref{eq:cor4}, it suffices to show that for the SK model with sufficiently small $\beta>0$ and arbitrary external field,
\ben{\label{eq:SK-1}
\frac{1}{n^2}\E\sum_{i=1}^n\left( \sum_{j=1}^{n} \left[ \sum_{l=1}^{n} m_{jl} \right] \left[ \sum_{k=1}^{n} m_{ijk} \right] \right)^2
\to 0,
}
where $m_{jl}$ and $m_{ijk}$ denote the two-point and three-point functions, respectively, and the expectation is with respect to the random interaction matrix.
Indeed, \cref{eq:thm1} with $1/(1-\|A\|_{\text{op}})$ replaced by $c_\beta\vee 1$ and $\theta_j=1/\sqrt{n}, 1\leq j\leq n$, together with \cref{eq:SK-1}, implies that (on the event that the Poincar\'e inequality holds, which has probability tending to 1)
\be{
\E  \mathcal{W}_2 \big(\mathcal{L}(W_n \vert H_n), N(\mu_n(H_n),\sigma_n^2(H_n))\big)\to 0,
}
hence \cref{eq:cor4} follows.
In fact, in the following, we will prove that for any $i\in \{1,\dots, n\}$,
\ben{\label{eq:SK-2}
\E\left( \sum_{j=1}^{n} \left[ \sum_{l=1}^{n} m_{jl} \right] \left[ \sum_{k=1}^{n} m_{ijk} \right] \right)^2=O(1),
}
which suffices for \cref{eq:SK-1}. Hereafter, the implicit constants in the big $O$ notation depend only on $\beta$.

To prove \cref{eq:SK-2}, we will use the dynamical version of the cavity method by \cite{adhikari2021dynamical}. In that paper, it was shown that (see \cref{eq:lemma3.1adhi} below) for any $1\leq i\ne j\leq n$,
\be{
m_{ij}=O_p(\frac{1}{\sqrt{n}}).
}
We begin with the crucial observation that because of significant cancellations between ferromagnetic and anti-ferromagnetic couplings, for any $i\in \{1,\dots, n\}$ (see \cref{two-point})
\be{
\sum_{j=1}^n m_{ij}=O_p(1).
}
A sequence of such observations (Lemmas \ref{two-point}-\ref{lemma-coeffi-A}) will eventually lead to \cref{eq:SK-2}.

The remainder of the proof is organized as follows. In \cref{subsec:SK1}, we introduce the notation, quote preparatory results from \cite{adhikari2021dynamical}, and state the necessary lemmas. In \cref{subsec:SK2}, we prove \cref{eq:SK-2}. In \cref{subsec:SK3}, we prove the results on $\E(\sigma_n^2(H_n))$ and $\Var(\sigma_n^2(H_n))$ in \cref{thm:4}. In \cref{subsec:SK4}, we give the proofs of the lemmas. 

\subsection{Preliminaries and lemmas}\label{subsec:SK1}

\paragraph{Restatement of the model.}
In this proof, we make slight changes to our notation to be consistent with the main reference \cite{adhikari2021dynamical}. We denote the spins by $\sigma=(\sigma_1,\dots, \sigma_n)^\top$, the inverse temperature by $\sqrt{t}$, and rewrite the SK model as
\be{
\mu(\sigma)\propto e^{H_n(\sigma)}
}
with the Hamiltonian
\be{
H_n(\sigma)=\sum_{1\leq i<j\leq n}g_{ij} \sigma_i \sigma_j +\sum_{i=1}^n h_i \sigma_i,
}
where $\{g_{ij}, 1\leq i<j\leq n  \}$ are i.i.d.\ Gaussians of variance $t/n$. We also set $g_{ji}=g_{ij}$ and $g_{ii}=0$ for all $i, j\in \{1,\dots, n\}$. 
For a function $f$ (observable) of the spin configuration, we denote by 
\be{
\langle f\rangle=\frac{1}{Z_n} \sum_{\sigma \in\{-1,1\}^n} f(\sigma) e^{H_n(\sigma)}, \quad Z_N=\sum_{\sigma \in\{-1,1\}^n} e^{H_n(\sigma)}
}
the Gibbs expectation (given the random interactions $\{g_{ij}, 1\leq i<j\leq n  \}$). The expectation with respect to $\{g_{ij}, 1\leq i<j\leq n  \}$ is denoted by $\E$.

\paragraph{Notation.}
As in \cite{adhikari2021dynamical}, we will need to consider expectations of observables conditionally on a sub-configuration. We will use the following notation as in \cite[Section~2]{adhikari2021dynamical}.
Let $A= \left\{j_1, j_2, \ldots, j_k\right\} \subset\{1, \ldots, n\}$, let $B \subset\{1, \ldots, n\}$ be disjoint from $A$ with $|B|=l$ and let $\tau=\left(\tau_{j_1}, \ldots, \tau_{j_k}\right) \in\{-1,1\}^k$ be a fixed $k$-particle configuration. Then, we define the reduced Hamiltonian $H_n^{[A, B]} \equiv H_{n,\left(\tau_{j_1}, \ldots, \tau_{j_k}\right)}^{[A, B]}$
by
$$
H_n^{[A, B]}(\sigma)=H_n^{[A, B]}\left(\sigma_i, i\notin A \cup B\right)=\sum_{\substack{1 \leq i<j \leq n: \\ i, j \notin A \cup B}} g_{i j} \sigma_i \sigma_j+\sum_{\substack{1 \leq i \leq n: \\ i \notin A \cup B}}\left(h+\sum_{j \in A} g_{i j} \tau_j\right) \sigma_i .
$$
$H_n^{[A, B]}(\sigma)$ plays the role of the energy of the system, conditionally on the spins $\sigma_j$ for $j \in A$ such that $\sigma_j=\tau_j$ and after the particles $\sigma_j$ for $j \in B$ have been removed from the system (or equivalently, setting $\sigma_j=0$ for $j\in B$). For disjoint subsets $A, B \subset\{1, \ldots, n\}$, we then denote by $\langle\cdot\rangle^{[A, B]}$ the Gibbs measure induced by the reduced Hamiltonian $H_n^{[A, B]}$. We abbreviate $\langle\cdot\rangle^{[A]} \equiv\langle\cdot\rangle^{[A, \emptyset]},\langle\cdot\rangle^{(B)} \equiv\langle\cdot\rangle^{[\emptyset, B]}$ as well as $\langle\cdot\rangle \equiv\langle\cdot\rangle^{[\emptyset, \emptyset]}$. In particular, $\langle\cdot\rangle$ denotes the usual Gibbs measure induced by $H_n=H_n^{[\emptyset, \emptyset]}$. By slight abuse of notation, if $A=\{i\}$ is a set of only one element, we write for simplicity
$$
\langle\cdot\rangle^{[i]}:=\langle\cdot\rangle^{[\{i\}]}, \quad\langle\cdot\rangle^{(i)}:=\langle\cdot\rangle^{(\{i\})} .
$$

For an observable $f$, notice that $\langle f\rangle^{[A]}$ is equal to the conditional expectation of $f$, given the spins $\sigma_j$ for $j \in A$. Observables of particular interest will be the magnetizations $m_i^{[A]}$,
the two-point functions $m_{i j}^{[A]}$ and the three-point functions $m_{i j k}^{[A]}$, defined by

$$
\begin{aligned}
& m_i^{[A]}=\left\langle\sigma_i\right\rangle^{[A]}, \quad m_{i j}^{[A]}=\left\langle\sigma_i \sigma_j\right\rangle^{[A]}-\left\langle\sigma_i\right\rangle^{[A]}\left\langle\sigma_j\right\rangle^{[A]}, \\
& m_{i j k}^{[A]}=\left\langle\left(\sigma_i-\left\langle\sigma_i\right\rangle^{[A]}\right)\left(\sigma_j-\left\langle\sigma_j\right\rangle^{[A]}\right)\left(\sigma_k-\left\langle\sigma_k\right\rangle^{[A]}\right)\right\rangle^{[A]}.
\end{aligned}
$$
They are effectively mixed cumulants of the spins under the conditional Gibbs measure.
If $i\in A$ (hence $\sigma_i$ is fixed), we see from the definitions that $m_{ij}^{[A]}=m_{ijk}^{[A]}=0$. 
If $A=\emptyset$, we simply write $m_i, m_{i j}$ and $m_{i j k}$, respectively.


Given disjoint subsets $A, B \subset\{1, \ldots, n\}$, an index $i \in A$ and an observable $f$, we introduce furthermore the notation
\ban{
&\delta_i\langle f\rangle^{[A, B]}=\frac{1}{2} \sum_{\sigma_i= \pm 1} \sigma_i\langle f\rangle^{[A, B]}\left(\sigma_i\right),\label{eq:defdelta}\\
&\varepsilon_i\langle f\rangle^{[A, B]}=\frac{1}{2} \sum_{\sigma_i= \pm 1}\langle f\rangle^{[A, B]}\left(\sigma_i\right), \label{eq:defeps}\\
&\Delta_i\langle f\rangle^{[A, B]}
=\varepsilon_i\langle f\rangle^{[A, B]}
-\langle f\rangle^{[A\backslash \{i\}, B\cup \{i\}]}.\label{eq:defDelta}
}

The above operations are crucial in the following cavity method (recursive argument).
In particular, $\delta_i\langle f\rangle^{[A, B]}$, $\varepsilon_i\langle f\rangle^{[A, B]}$ and $\Delta_i\langle f\rangle^{[A, B]}$ are all functions of $\{\sigma_j, j \in A \backslash\{i\}\}$.
$\varepsilon_i\langle f\rangle^{[A, B]}$ ($\delta_i\langle f\rangle^{[A, B]}$, resp.) is defined by taking the average (signed average, resp.) over the possible values of $\sigma_i=\pm 1$. $\Delta_i\langle f\rangle^{[A, B]}$ is defined by taking the difference between the average value and the Gibbs expectation with $i$ removed.

Finally, we denote by $C$ generic constants that may vary from line to line and that are independent of all parameters, unless specified otherwise. If a constant depends on parameters, say $t$ and $\epsilon$, we denote this typically by subscripts, i.e. $C_{t, \epsilon}$.

\paragraph{Preliminaries.}
We quote the following useful representations and bounds on the two-point and three-point functions from \cite{adhikari2021dynamical}.

From \cite[Eq.(3.1) and Eq.(3.2)]{adhikari2021dynamical}, we have
\ben{\label{eq:mij}
m_{i j}^{[A]}=\left[1-\left(m_i^{[A]}\right)^2\right] \delta_i m_j^{[A \cup\{i\}]}
}
and 
\ben{\label{eq:mijk}
m_{i j k}^{[A]}=\left[1-\left(m_i^{[A]}\right)^2\right] \delta_i m_{j k}^{[A \cup\{i\}]}-2 m_i^{[A]} m_{i k}^{[A]} \delta_i m_j^{[A \cup\{i\}]},
}
for any $A\subset \{1,\dots, n\}$ and $i,j,k\in \{1,\dots, n\}$. Note that although  the equations are only stated for $i,j,k\notin A$ in the reference, they in fact hold for any $i,j,k$ because if any of the indices $i,j,k$ is in $A$ then both sides of the equation become 0.

As in \cite{adhikari2021dynamical}, we can view the $\left(g_{i k}\right)_{k \notin A}=\left(g_{i k}(t)\right)_{k \notin A}$ in $H_n^{[A \cup\{i\}]}$ as Brownian motions at time $t$ and speed $1 / n$ to rewrite the difference $\delta_i m_j^{[A \cup\{i\}]}$ in \cref{eq:mij} through It\^o\!\!'s lemma and the identity
\ben{\label{eq:mkkj}
m_{kkj}^{[A\cup \{i\}]}=-2m_k^{[A\cup \{i\}]} m_{kj}^{[A\cup \{i\}]}
}
as (see \cite[Eq.(3.3)]{adhikari2021dynamical})
\ben{\label{eq:SI1}
\delta_i m_j^{[A \cup\{i\}]}=\sum_{k \notin A} \int_0^t \varepsilon_i m_{k j}^{[A \cup\{i\}]}(s) d g_{i k}(s)-\sum_{k \notin A} \int_0^t \delta_i\left(m_k^{[A \cup\{i\}]} m_{k j}^{[A \cup\{i\}]}\right)(s) \frac{d s}{n},\quad \text{for}\  j\ne i.
}
Here and throughout this paper, we abbreviate $\langle f\rangle^{[A \cup\{i\}]}(s)=\langle f\rangle^{[A \cup\{i\}]}\left(\left(g_{i l}(s)\right)_{l \notin A}\right)$ for any observable $f$.
Moreover, we often write $\langle f\rangle^{[A \cup\{i\}]}(t)$ simply as $\langle f\rangle^{[A \cup\{i\}]}$, which is consistent with the earlier notation before introducing the Brownian motion.
Differentiating the identity \cref{eq:mij} with respect to the external field in the direction of $\sigma_k$, we find that (see \cite[below Eq.(3.5)]{adhikari2021dynamical})
\besn{\label{eq:SI2}
\delta_i m_{j k}^{[A \cup\{i\}]}= & \sum_{l \notin A} \int_0^t \varepsilon_i m_{j k l}^{[A \cup\{i\}]}(s) d g_{i l}(s)-\sum_{l \notin A} \int_0^t \delta_i\left(m_{k l}^{[A \cup\{i\}]} m_{j l}^{[A \cup\{i\}]}\right)(s) \frac{d s}{n} \\
& -\sum_{l \notin A} \int_0^t \delta_i\left(m_l^{[A \cup\{i\}]} m_{j k l}^{[A \cup\{i\}]}\right)(s) \frac{d s}{n}.
}
It was proved in \cite[lemma~3.1]{adhikari2021dynamical} that
for an arbitrary $\epsilon>0$, there exist constants $C_{t,\epsilon}, c_\epsilon > 0$ such that
\ben{\label{eq:lemma3.1adhi}
\E |m_{ij}|^{2+\epsilon}\leq \frac{C_{t,\epsilon}}{n^{1+\epsilon/2}}
}
for all \(i\ne j\in \{1,\ldots,n\}\) and \(0 \le t < c_\epsilon\).\footnote{They stated the result for $0\leq t<\log 2$ and sufficiently small $\epsilon>0$, but the version for fixed $\epsilon$ and sufficiently small $t$ can be easily deduced from their proof. Also, they stated the result for the case $h_i=h$ for all $i$, but the proof does not require that.}

\paragraph{Lemmas.} We are now ready to state the necessary lemmas. We will only need $\epsilon$ up to 6 when applying these lemmas. As can be seen from the proofs, and as also mentioned in \cite[Page 11, Remark(2)]{adhikari2021dynamical}, the constants $C_{t,\eps}$ in \cref{eq:lemma3.1adhi} and in the following lemmas can be chosen to be non-decreasing in the parameter $t$ of the SK model.


\begin{lemma}\label{two-point}
Let \(A \subset \{1,\ldots,n\}\) and choose arbitrary \(\epsilon > 0\).
Then, for some \(C_{t,\epsilon}, c_\epsilon > 0\), independent of \(n\) and \(A\), we have
\begin{equation*}
\sup_{\sigma \in \{-1,1\}^{A}}
\E\Big|\sum_{j=1}^nm_{ij}^{[A]}\Big|^{2+\epsilon}
\le C_{t,\epsilon}
\end{equation*}
for all \(i\in \{1,\ldots,n\}\) and \(0 \le t < c_\epsilon\).
Hereafter, $\sup_{\sigma \in \{-1,1\}^{A}}$ is over all configurations of the spins in $A$.
\end{lemma}

\begin{lemma}\label{three-onesum}
Let \(A \subset \{1,\ldots,n\}\) and choose arbitrary \(\epsilon > 0\).
Then, for some \(C_{t,\epsilon}, c_\epsilon > 0\), independent of \(n\) and \(A\), we have
\begin{equation*}
\sup_{\sigma \in \{-1,1\}^{A}}
\E\Big|\sum_{j=1}^nm_{ijk}^{[A]}\Big|^{2+\epsilon}
\le \frac{C_{t,\epsilon}}{n^{1+\epsilon/2}}
\end{equation*}
for all $i\ne k$ and \(0 \le t < c_\epsilon\).
\end{lemma}

We note that in \cref{three-onesum}, the assumption $i\ne k$ is necessary. Otherwise, the left-hand side is of order $O(1)$.
    
\begin{lemma}\label{three-point}
Let \(A \subset \{1,\ldots,n\}\) and choose arbitrary \(\epsilon > 0\).
Then, for some \(C_{t,\epsilon}, c_\epsilon > 0\), independent of \(n\) and \(A\), we have
\begin{equation*}
\sup_{\sigma \in \{-1,1\}^{A}}
\E\Big|\sum_{k,j=1}^nm_{ijk}^{[A]}\Big|^{2+\epsilon}
\le C_{t,\epsilon}
\end{equation*}
for all \(i\in \{1,\ldots,n\}\) and \(0 \le t < c_\epsilon\).
\end{lemma}

\begin{lemma}\label{Delta-error}
Let \(A \subset \{1,\ldots,n\}\) and choose arbitrary \(\epsilon > 0\).
Then, for some \(C_{t,\epsilon}, c_\epsilon > 0\), independent of \(n\), we have
\begin{equation*}
\sup_{\sigma \in \{-1,1\}^{A\backslash \{i\}}}
\E\Big|\sum_{j=1}^{n}\Delta_i m_{jk}^{[A]}\Big|^{2+\epsilon}\leq \frac{C_{t,\epsilon}}{n^{1+\epsilon/2}}
\end{equation*}
for all \(i\in A\), \(k\in \{1,\ldots,n\}\) and \(0 \le t < c_\epsilon\).
\end{lemma}

\begin{lemma}\label{Delta-error-higher-order}
Let \(A \subset \{1,\ldots,n\}\) and choose arbitrary \(\epsilon > 0\).
Denote $a_{j}^{(A)}=\sum_{l=1}^{n} m_{jl}^{(A)}$. Then, for some \(C_{t,\epsilon}, c_\epsilon > 0\), independent of \(n\), we have
    \begin{equation*}
        \E\Big|(a_{j}^{(A)}-a_{j}^{(A\cup \{i\})})(t)\Big|^{2+\ep}\leq \frac{C_{t,\ep}}{n^{1+\ep/2}}
    \end{equation*}
for all \(i\in \{1,\ldots,n\}\), $j\ne i$ and \(0 \le t < c_\epsilon\).
\end{lemma}

\begin{lemma}\label{two-point-coef}
Let \(A \subset \{1,\ldots,n\}\) and choose arbitrary \(\epsilon > 0\). Then, for some \(C_{t,\epsilon}, c_\epsilon > 0\), independent of \(n\), we have
    \begin{align*}
        \sup_{\sigma \in \{-1,1\}^{A}}\E\Big|\sum_{j=1}^n \left[\sum_{l=1}^{n} m_{jl}^{(A)}\right] \delta_i m_j^{[A]}(t)\Big|^{2+\ep}\le C_{t,\ep}
    \end{align*}
for all \(i\in A\) and \(0 \le t < c_\epsilon\).
\end{lemma}

\begin{lemma}\label{lemma-coeffi-A}
For sufficiently small $t>0$, we have
    \begin{equation}\label{eq-coef-three-A}
    \sup_{\sigma \in \{-1,1\}^{A}}\E\big(\sum_{j=1}^{n} \left[ \sum_{l=1}^{n} m_{jl}^{(A)} \right]\delta_i \left[\sum_{k=1}^{n}m_{jk}^{[A]}\right]\big)^2= O(1)
    \end{equation}
for all \(i\in A \subset \{1,\ldots,n\}\).
\end{lemma}

\subsection{Proof of \cref{eq:SK-2}}\label{subsec:SK2}
Recall from \cref{eq:SK-2} that we aim to show that for any $i\in \{1,\dots, n\}$ and sufficiently small $t>0$,
\ben{\label{eq:SK-2repeat}
\E\left( \sum_{j=1}^{n} \left[ \sum_{l=1}^{n} m_{jl} \right] \left[ \sum_{k=1}^{n} m_{ijk} \right] \right)^2=O(1).
}
In the following, we fix $i$.
We rewrite
\begin{equation}\label{eq:rewrite2terms}
    \begin{split}
        \sum_{j=1}^{n} \left[ \sum_{l=1}^{n} m_{jl} \right] \left[ \sum_{k=1}^{n} m_{ijk} \right] &=\sum_{j=1}^{n} \left[ \sum_{l=1}^{n} m_{jl}^{(i)} \right] \left[ \sum_{k=1}^{n} m_{ijk} \right] +\sum_{j=1}^{n} \left[ \sum_{l=1}^{n} (m_{jl}-m_{jl}^{(i)}) \right] \left[ \sum_{k=1}^{n} m_{ijk} \right]. 
    \end{split}
\end{equation}
For the first term on the right-hand side of \cref{eq:rewrite2terms}, we rewrite it using \cref{eq:mijk} as
\bes{
    &\sum_{j=1}^{n} \left[ \sum_{l=1}^{n} m_{jl}^{(i)} \right] \left[ \sum_{k=1}^{n} m_{ijk} \right] \\
    =&\left[1-\left(m_i\right)^2\right]\bigl(\sum_{j=1}^{n} \left[ \sum_{l=1}^{n} m_{jl}^{(i)} \right]\delta_i\left[\sum_{k=1}^{n}m_{jk}^{[i]}\right]\bigr)-2\,m_i\left[\sum_{k=1}^{n}m_{ik}\right]\,\bigl( \sum_{j=1}^{n}  \left[ \sum_{l=1}^{n} m_{jl}^{(i)} \right]\delta_i m_j^{[i]}\bigr).
}
From $|m_i|\leq 1$, we have
\begin{equation*}
\begin{split}
    &\E\left(\sum_{j=1}^{n} \left[ \sum_{l=1}^{n} m_{jl}^{(i)} \right] \left[ \sum_{k=1}^{n} m_{ijk} \right] \right)^2\\
    \le& 2\E \left(\sum_{j=1}^{n} \left[ \sum_{l=1}^{n} m_{jl}^{(i)} \right]\delta_i\left[\sum_{k=1}^{n}m_{jk}^{[i]}\right]\right)^2+8\E\left(\left[\sum_{k=1}^{n}m_{ik}\right]\,\bigl( \sum_{j=1}^{n}  \left[ \sum_{l=1}^{n} m_{jl}^{(i)} \right]\delta_i m_j^{[i]}\bigr)\right)^2\\
\le& 2\E \left(\sum_{j=1}^{n} \left[ \sum_{l=1}^{n} m_{jl}^{(i)} \right]\delta_i\left[\sum_{k=1}^{n}m_{jk}^{[i]}\right]\right)^2+8\sqrt{\E\left[\sum_{k=1}^{n}m_{ik}\right]^4}\cdot\sqrt{\E\bigl( \sum_{j=1}^{n}  \left[ \sum_{l=1}^{n} m_{jl}^{(i)} \right]\delta_i m_j^{[i]}\bigr)^4}.
\end{split}
\end{equation*}
From \cref{two-point}, \cref{two-point-coef} and \cref{lemma-coeffi-A}, we conclude that
\begin{equation}\label{eq:2terms1}
    \E \left(\sum_{j=1}^{n} \left[ \sum_{l=1}^{n} m_{jl}^{(i)} \right] \left[ \sum_{k=1}^{n} m_{ijk} \right] \right)^2= O(1).
\end{equation}
For the second term on the right-hand side of \cref{eq:rewrite2terms}, we will use the identity (to be proved half an page below)
    \begin{equation}\label{error}
m_{jk} - m_{jk}^{(i)}
=
(\delta_i m_j^{[i]})\, m_{ik}
+ \Delta_i m_{jk}^{[i]}
+ m_i\, (\delta_i m_{jk}^{[i]}).
\end{equation}
From \cref{error}, we have, using $\|X\|_p$ to denote $(\E |X|^p)^{1/p}$ for $p\geq 1$,
\begin{equation*}
\begin{split}
\|\sum_{j=1}^{n}(m_{jk}-m_{jk}^{(i)})\|_4 &=\|(\sum_{j=1}^{n}\delta_i m_j^{[i]})\, m_{ik}
+ \sum_{j=1}^{n}\Delta_i m_{jk}^{[i]}
+ m_i\, (\sum_{j=1}^{n}\delta_i m_{jk}^{[i]})\|_4\\
&\leq \|(\sum_{j=1}^{n}\delta_i m_j^{[i]}) m_{ik}\|_4+ \|\sum_{j=1}^{n}\Delta_i m_{jk}^{[i]}\|_4+\|m_i\, (\sum_{j=1}^{n}\delta_i m_{jk}^{[i]})\|_4.
\end{split}
\end{equation*}
For the first term in the above upper bound, we have
\begin{equation*}
       \|(\sum_{j=1}^{n}\delta_i m_j^{[i]}) m_{ik}\|_4^4=\E\bigl((\sum_{j=1}^{n}\delta_i m_j^{[i]}) m_{ik}\bigr)^4\leq\bigl(\E|\sum_{j=1}^{n}\delta_i m_j^{[i]}|^8\bigr)^{\frac{1}{2}}\cdot\bigl(\E|m_{ik}|^{8}\bigr)^{\frac{1}{2}}
       \leq \frac{C}{n^2}+C 1_{\{i=k\}},  
\end{equation*}
where we used \cref{eq:two-point1} in the proof of \cref{two-point} and \cref{eq:lemma3.1adhi} in the last inequality.
From \cref{Delta-error} and \cref{eq:three-onesum1} in the proof of \cref{three-onesum}, we have
\be{
\|\sum_{j=1}^{n}\Delta_i m_{jk}^{[i]}\|_4=O(\frac{1}{\sqrt{n}})
}
and
$$\|m_i\, (\sum_{j=1}^{n}\delta_i m_{jk}^{[i]})\|_4\leq \|\sum_{j=1}^{n}\delta_i m_{jk}^{[i]}\|_4= O(\frac{1}{\sqrt{n}}).$$
Therefore,
\ben{\label{eq:plusindicator}
\|\sum_{j=1}^{n}(m_{jk}-m_{jk}^{(i)})\|_4=O(\frac{1}{\sqrt{n}})+C 1_{\{i=k\}}.
}
From the Cauchy-Schwarz inequality, \cref{eq:plusindicator} and \cref{three-onesum}, we obtain 
\besn{\label{eq:2terms2}
&\E\left(\sum_{j=1}^{n} \left[ \sum_{l=1}^{n} (m_{jl}-m_{jl}^{(i)}) \right] \left[ \sum_{k=1}^{n} m_{ijk} \right]\right)^2
\leq \E \sum_{j_1=1}^{n} \left[ \sum_{l=1}^{n} (m_{j_1 l}-m_{j_1 l}^{(i)}) \right]^2 \sum_{j_2=1}^{n} \left[ \sum_{k=1}^{n} m_{ij_2 k} \right]^2\\
\leq &\sum_{j_1, j_2=1}^n\sqrt{\E \left[ \sum_{l=1}^{n} (m_{j_1 l}-m_{j_1 l}^{(i)}) \right]^4} \cdot \sqrt{\E \left[ \sum_{k=1}^{n} m_{ij_2 k} \right]^4}\\
=&O(n^2\cdot \frac{1}{n} \cdot \frac{1}{n} )+O(n\cdot 1\cdot \frac{1}{n})+O(n\cdot \frac{1}{n}\cdot 1)+O(1\cdot 1\cdot 1)=O(1),
}
where for the case $j_2=i$, 
we also used $m_{iik}=-2m_i m_{ik}$ (cf. \cref{eq:mkkj}) and \cref{two-point}.
Combining \cref{eq:2terms1,eq:2terms2}, we prove \cref{eq:SK-2repeat}. 

It remains to prove \cref{error}.
\begin{proof}[Proof of (\ref{error})]
From the definition of $\Delta_i$ in \cref{eq:defDelta}, we have
\begin{equation*}
\varepsilon_i m_{jk}^{[i]} = m_{jk}^{(i)} + \Delta_i m_{jk}^{[i]}.
\end{equation*}
The desired equation \cref{error} then follows once we prove
    \begin{equation}\label{twopoint-identity}
m_{jk}=\left(\delta_i\,m_j^{[i]}\right)m_{ik}+\varepsilon_i m_{jk}^{[i]}+m_i\left(\delta_i\,m_{jk}^{[i]}\right).
\end{equation}
It remains to prove (\ref{twopoint-identity}). 

Using $\langle\cdot ;\cdot \rangle$ to represent the covariance, we have
\begin{equation}\label{eq:mjkrep1}
m_{jk}
=
\langle \sigma_j;\sigma_k\rangle
:=
\langle \sigma_j\sigma_k\rangle
-
\langle \sigma_j\rangle \langle \sigma_k\rangle.
\end{equation}
By first conditioning on \(\sigma_i\), we have
\begin{equation*}
\langle \sigma_j\sigma_k\rangle
=
\bigl\langle \langle \sigma_j\sigma_k\rangle^{[i]} \bigr\rangle.
\end{equation*}
Under the conditional measure, we have
\begin{equation*}
\langle \sigma_j\sigma_k\rangle^{[i]}
=
\langle \sigma_j;\sigma_k\rangle^{[i]}
+
\langle \sigma_j\rangle^{[i]}\langle \sigma_k\rangle^{[i]}
=
m_{jk}^{[i]} + m_j^{[i]} m_k^{[i]}.
\end{equation*}
Thus,
\begin{equation}\label{eq:mjkrep2}
\langle \sigma_j\sigma_k\rangle
=
\langle m_{jk}^{[i]} \rangle
+
\langle m_j^{[i]} m_k^{[i]} \rangle.
\end{equation}
Again by first conditioning on \(\sigma_i\), we have
\begin{equation}\label{eq:mjkrep3}
\langle \sigma_j\rangle = \langle m_j^{[i]} \rangle,
\qquad
\langle \sigma_k\rangle = \langle m_k^{[i]} \rangle.
\end{equation}
From \cref{eq:mjkrep1,eq:mjkrep2,eq:mjkrep3}, we have
\begin{equation*}
m_{jk}
=
\langle m_{jk}^{[i]} \rangle
+
\left(
\langle m_j^{[i]} m_k^{[i]} \rangle
-
\langle m_j^{[i]} \rangle \langle m_k^{[i]} \rangle
\right),
\end{equation*}
that is,
\begin{equation}\label{eq:mjkrep}
m_{jk}
=
\langle m_j^{[i]} ; m_k^{[i]} \rangle
+
\langle m_{jk}^{[i]} \rangle.
\end{equation}

For any quantity $X^{[i]}(\sigma_i)$ depending only on the value of the spin $\sigma_i$, from \cref{eq:defdelta} and \cref{eq:defeps}, we have the identity
\ben{\label{eq:firep}
X^{[i]}(\sigma_i)=\eps_i X^{[i]}+\sigma_i \delta_i X^{[i]}
}
by verifying it for both the cases $\sigma_i=1$ and  $\sigma_i=-1$.
From \cref{eq:firep}, we have
\begin{equation*}
m_j^{[i]}=\varepsilon_i m_j^{[i]}+\sigma_i\,\delta_i m_j^{[i]},
\qquad
m_k^{[i]}=\varepsilon_i m_k^{[i]}+\sigma_i\,\delta_i m_k^{[i]}.
\end{equation*}
Since \(\varepsilon_i m_j^{[i]}\) and \(\delta_i m_j^{[i]}\) are no longer random, the covariance becomes
\begin{equation*}
\langle m_j^{[i]} ; m_k^{[i]} \rangle
=
(\delta_i m_j^{[i]})(\delta_i m_k^{[i]})
\langle \sigma_i;\sigma_i\rangle=(1-m_i^2)(\delta_i m_j^{[i]})(\delta_i m_k^{[i]}).
\end{equation*}
Using the identity (cf. \cref{eq:mij})
\begin{equation*}
m_{ik}
=
\langle \sigma_i;\sigma_k\rangle
=
(1-m_i^2)\,\delta_i m_k^{[i]},
\end{equation*}
we obtain
\begin{equation}\label{eq:mjkrep4}
\langle m_j^{[i]} ; m_k^{[i]} \rangle
=
(\delta_i m_j^{[i]})\,m_{ik}.
\end{equation}
Taking expectation with respect to the Gibbs measure on both sides of \cref{eq:firep} with $X^{[i]}=m_{jk}^{[i]}$, we have
\begin{equation}\label{eq:mjkrep5}
\langle m_{jk}^{[i]} \rangle
=
\varepsilon_i m_{jk}^{[i]} + m_i\,\delta_i m_{jk}^{[i]}.
\end{equation}
Combining \cref{eq:mjkrep,eq:mjkrep4,eq:mjkrep5}, we obtain \cref{twopoint-identity}.
\end{proof}

\subsection{$\E(\sigma_n^2(H_n))$ and $\Var(\sigma_n^2(H_n))$}\label{subsec:SK3}

We first prove that $\E(\sigma_n^2(H_n))$ is bounded away from 0 for sufficiently large $n$.
By the definition of $\sigma_n^2(H_n)$ and using $m_{ii}=1-m_i^2$, we have
\ben{\label{eq:sigmaH}
\sigma_n^2(H_n)=1-\frac{\sum_{i=1}^n m_i^2}{n}+\frac{\sum_{1\leq i\ne j\leq n} m_{ij}}{n}.
}
From \cref{eq:lineart} in the proof of \cref{two-point}, we have, for sufficiently small $t>0$ and any $i\in \{1,\dots n\}$, 
\ben{\label{eq:sigmaH3}
\E (\sum_{j: j\ne i} m_{ij})^2=O(t).
}
From \cref{eq:sigmaH} and \cref{eq:sigmaH3}, to show that $\E(\sigma_n^2(H_n))$ is bounded away from 0 for sufficiently small $t>0$, it suffices to show that $\E m_i^2$ is bounded away from 1.

From \cite[Lemma~4.1]{adhikari2021dynamical}, we have, for sufficiently small $t>0$,
\ben{\label{eq:lemma41}
\E\left[m_i-\tanh\left(h_i+\sum_{j: j\ne i}g_{ij}m^{(i)}_j\right)\right]^2\leq \frac{C_t}{n}.
}
Note that they only considered the case $h_i\equiv h$ for all $i\in \{1,\dots, n\}$, but it is easy to see that their proof does not rely on this assumption.
From \cref{eq:lemma41}, to show that $\E m_i^2$ is bounded away from 1, it suffices to show that
\be{
\E \tanh^2\left(h_i+\sum_{j: j\ne i}g_{ij}m^{(i)}_j\right)
}
is bounded away from 1. 
This follows by first conditioning on $\{m^{(i)}_j, j\ne i\}$ and then using that $|h_i|\leq c_h$, $|m^{(i)}_j|\leq 1$ and $\{g_{ij}, j\ne i\}$ is independent of $\{m^{(i)}_j, j\ne i\}$.

Next, we prove $\Var(\sigma_n^2(H_n))\to 0$.
From the Gaussian Poincar\'e inequality and a straightforward computation of the partial derivatives, we obtain
\ban{\label{eq:varbound}
& \operatorname{Var}\left(\sigma_n^2\left(H_n\right)\right)=\operatorname{Var}\left(\frac{\sum_{i, j=1}^n m_{i j}}{n}\right) \leq \frac{1}{n^2} \sum_{k, l=1}^n \E\left[\frac{\beta}{\sqrt{n}} \sum_{i, j=1}^n \frac{\partial}{\partial g_{k l}} m_{i j}\right]^2\nonumber \\
= & \frac{\beta^2}{n^3} \sum_{k, l=1}^n \E\left[\sum_{i, j=1}^n\left(m_{i j k l}+m_k m_{i j l}+m_l m_{i j k}+m_{i k} m_{j l}+m_{i l} m_{j k} \right)\right]^2\nonumber \\
\leq & \frac{\beta^2}{n^3} \sum_{k, l=1}^n\left(\sum_{r=1}^5\Big\|A_{k, l}^{(r)}\Big\|_2\right)^2, 
}
where
$m_{ijkl}$ is the fourth mixed cumulant of the spins $\sigma_i, \sigma_j, \sigma_k, \sigma_l$ and
$$
\begin{gathered}
A_{k, l}^{(1)}:=\sum_{i, j=1}^n m_{i j k l}, \quad A_{k, l}^{(2)}:=\sum_{i, j=1}^n m_k m_{i j l}, \quad A_{k, l}^{(3)}:=\sum_{i, j=1}^n m_l m_{i j k} \\
A_{k, l}^{(4)}:=\sum_{i, j=1}^n m_{i k} m_{j l}, \quad A_{k, l}^{(5)}:=\sum_{i, j=1}^n m_{i l} m_{j k}
\end{gathered}
$$

For $A_{k, l}^{(2)}$ and $A_{k, l}^{(3)}$, by \cref{three-point},

$$
\begin{aligned}
\Big\|A_{k, l}^{(2)}\Big\|_2 & =\Big\|\sum_{i, j=1}^n m_k m_{i j l}\Big\|_2 \leq\Big\|\sum_{i, j=1}^n m_{i j l}\Big\|_2 \leq C_t \\
\Big\|A_{k, l}^{(3)}\Big\|_2 & =\Big\|\sum_{i, j=1}^n m_l m_{i j k}\Big\|_2 \leq\Big\|\sum_{i, j=1}^n m_{i j k}\Big\|_2 \leq C_t
\end{aligned}
$$

For $A_{k, l}^{(4)}$ and $A_{k, l}^{(5)}$, by \cref{two-point},

$$
\begin{aligned}
\Big\|A_{k, l}^{(4)}\Big\|_2 & =\Big\|\sum_{i, j=1}^n m_{i k} m_{j l}\Big\|_2 \leq\Big\|\sum_{i=1}^n m_{i k}\Big\|_4 \cdot\Big\|\sum_{j=1}^n m_{j l}\Big\|_4 \leq C_t \\
\Big\|A_{k, l}^{(5)}\Big\|_2 & =\Big\|\sum_{i, j=1}^n m_{i l} m_{j k}\Big\|_2 \leq\Big\|\sum_{i=1}^n m_{i l}\Big\|_4 \cdot\Big\|\sum_{j=1}^n m_{j k}\Big\|_4 \leq C_t
\end{aligned}
$$

Now we consider $\Big\|A_{k, l}^{(1)}\Big\|_2$. 
We will prove that for all $A\subset \{1,\dots, n\}$,
\ben{\label{eq:mijklbound}
\Big\|\sum_{k=1}^n \sum_{l=1}^n m_{i j k l}^{[A]}\Big\|_2 \leq C_t.
}
Differentiating \cref{eq:mijk} with respect to the external field in direction of $\sigma_l$, we obtain
$$
\begin{gathered}
m_{i j k l}^{[A]}=\left(1-\left(m_i^{[A]}\right)^2\right) \delta_i m_{j k l}^{[A \cup\{i\}]}-2 m_i^{[A]} m_{i l}^{[A]} \delta_i m_{j k}^{[A \cup\{i\}]}-2 m_i^{[A]} m_{i k}^{[A]} \delta_i m_{j l}^{[A \cup\{i\}]} \\
-2\left(m_{i l}^{[A]} m_{i k}^{[A]}+m_i^{[A]} m_{i k l}^{[A]}\right) \delta_i m_j^{[A \cup\{i\}]},
\end{gathered}
$$
hence
$$
\sum_{k, l=1}^n m_{i j k l}^{[A]}=\left(1-\left(m_i^{[A]}\right)^2\right) \sum_{k, l=1}^n \delta_i m_{j k l}^{[A \cup\{i\}]}+R_{i j}^{[A]},
$$
where
$$
\begin{aligned}
R_{i j}^{[A]}=- & 2 m_i^{[A]}\left(\sum_{l=1}^n m_{i l}^{[A]}\right)\left(\sum_{k=1}^n \delta_i m_{j k}^{[A \cup\{i\}]}\right)-2 m_i^{[A]}\left(\sum_{k=1}^n m_{i k}^{[A]}\right)\left(\sum_{l=1}^n \delta_i m_{j l}^{[A \cup\{i\}]}\right) \\
& -2 \delta_i m_j^{[A \cup\{i\}]}\left(\sum_{k=1}^n m_{i k}^{[A]}\right)\left(\sum_{l=1}^n m_{i l}^{[A]}\right)-2 m_i^{[A]}\left( \sum_{k, l=1}^n m_{i k l}^{[A]} \right)\delta_i m_j^{[A \cup\{i\}]}  .
\end{aligned}
$$
Using \cref{two-point,three-point,eq:three-onesum1},
$\|R_{i j}^{[A]}\|_2$ can be bounded as
$$
\begin{aligned}
\Big\|R_{i j}^{[A]}\Big\|_2 \leq & C\Big\|\sum_{l=1}^n m_{i l}^{[A]}\Big\|_4\Big\|\sum_{k=1}^n \delta_i m_{j k}^{[A \cup\{i\}]}\Big\|_4 \\
& +C\Big\|\sum_{k=1}^n m_{i k}^{[A]}\Big\|_4\Big\|\sum_{l=1}^n m_{i l}^{[A]}\Big\|_4+C\Big\|\sum_{k, l=1}^n m_{i k l}^{[A]}\Big\|_2 \\
\leq & C_t.
\end{aligned}
$$
Moreover, from \cref{three-point},
$$
\Big\|\left(1-\left(m_i^{[A]}\right)^2\right) \sum_{k, l=1}^n \delta_i m_{j k l}^{[A \cup\{i\}]}\Big\|_2^2 \leq\Big\|\sum_{k, l=1}^n \delta_i m_{j k l}^{[A \cup\{i\}]}\Big\|_2^2 \leq\Big\|\sum_{k, l=1}^n m_{j k l}^{[A \cup\{i\}]}(t)\Big\|_2^2 \leq C_t.
$$
Therefore, we have proved \cref{eq:mijklbound}, hence $\|A_{k,l}^{(1)}\|_2\leq C_t$.

The bound \cref{eq:varbound}, together with the boundedness of $\|A_{k,l}^{(r)}\|_2$, proves that $\Var(\sigma_n^2(H_n))\to 0$.

\subsection{Proofs of lemmas}\label{subsec:SK4}

The proofs are based on the dynamical approach of \cite{adhikari2021dynamical}, in particular, their Lemma~3.1. 
We need various modifications to deal with more complicated functionals of two-point and three-point functions.
In all the following proofs, we fix a given configuration of the spins in $A$.

\paragraph{Proof of Lemma \ref{two-point}.}
Assume $i\notin A$. Otherwise, $m^{[A]}_{ij}=0$ and the lemma holds.
From \cref{eq:mij}, we have
\begin{equation*}
    \sum_{j=1}
    ^n m_{ij}^{[A]}=\left[1-\left(m_i^{[A]}\right)^2\right]\sum_{j=1}
    ^n \delta_i\, m_j^{[A\cup\{i\}]}.
\end{equation*}
Because $|m_i^{[A]}|\leq 1$, to prove the lemma, it suffices to prove
\ben{\label{eq:two-point1}
\E |\sum_{j=1}^n \delta_i m_j^{[A\cup \{i\}]}|^{2+\epsilon}\leq C_{t,\epsilon}
}
for sufficiently small $t$.
We first consider the special cases that $j=i$ or $j\in A$.
Because the spin $\sigma_i$ is fixed in the definition of $m_i^{[A\cup\{i\}]}$, we have
\[
m_i^{[A\cup\{i\}]}=\sigma_i,
\]
hence
\begin{equation*}
\delta_i m_i^{[A\cup\{i\}]}=\frac12\sum_{\sigma_i=\pm1}\sigma_i^2=1.
\end{equation*}
If $j\in A$, then $\sigma_j$ is fixed, $m_j^{[A\cup\{i\}]}=\sigma_j$ and 
\begin{equation*}
\delta_i m_j^{[A\cup\{i\}]}=\frac12\sum_{\sigma_i=\pm1}\sigma_i\,\sigma_j
=\sigma_j\cdot\frac12\sum_{\sigma_i=\pm1}\sigma_i=0.
\end{equation*}
Therefore,
\be{
\sum_{j=1}^n\delta_i m_j^{[A\cup\{i\}]}-1=\sum_{j: j\ne i} \delta_i m_j^{[A\cup\{i\}]}=\sum_{j\notin A\cup \{i\}}\delta_i m_j^{[A\cup\{i\}]}.
}
To prove \cref{eq:two-point1}, it remains to prove
\ben{\label{eq:two-point2}
\E |\sum_{j\notin A\cup \{i\}} \delta_i m_j^{[A\cup \{i\}]}|^{2+\epsilon}\leq C_{t,\epsilon}.
}
From \cref{eq:SI1}, we have
\bes{\label{eq-2-1}
\sum_{j\notin A\cup \{i\}}\delta_i m_j^{[A\cup\{i\}]}=& \sum_{k\notin A}\int_{0}^{t}\sum_{j\notin A\cup \{i\}}\eps_i\, m_{kj}^{[A\cup\{i\}]}(s)\, d g_{ik}(s)\\
&- \sum_{k\notin A}\int_{0}^{t}\sum_{j\notin A\cup \{i\}}\delta_i\!\left( m_k^{[A\cup\{i\}]} m_{kj}^{[A\cup\{i\}]} \right)(s)\,\frac{ds}{n}.
}
From It\^o\!\!'s lemma (cf. \cite[Page~10]{adhikari2021dynamical}) and noting that $m_{ij}^{[A\cup \{i\}]}=0$, we have
\begin{equation}\label{eq:J1J2}
    \begin{split}
        &\E\bigl|\sum_{j\notin A\cup \{i\}}\delta_i m_j^{[A\cup\{i\}]}\bigr|^{2+\epsilon}(t)\\
\le& \left(1+\frac{\epsilon}{2}\right)(1+\epsilon)
\sum_{k\notin A\cup\{i\}}\int_{0}^{t}
\E\bigl|\sum_{j\notin A\cup \{i\}}\delta_i m_j^{[A\cup\{i\}]}\bigr|^{\epsilon}(s)\,
\bigl|\eps_i\sum_{j\notin A\cup \{i\}}m_{kj}^{[A\cup\{i\}]}\bigr|^{2}(s)\,\frac{ds}{n}\\
&\qquad
+(2+\epsilon)\sum_{k\notin A\cup\{i\}}\int_{0}^{t}
\E\bigl|\sum_{j\notin A\cup \{i\}}\delta_i m_j^{[A\cup\{i\}]}\bigr|^{1+\epsilon}(s)\,
\bigl|\delta_i\!\left(m_k^{[A\cup\{i\}]}\sum_{j\notin A\cup \{i\}}m_{kj}^{[A\cup\{i\}]}\right)\bigr|(s)\,\frac{ds}{n}\\
=:&J_1+J_2.
    \end{split}
\end{equation}
By Young's inequality (i.e., $a^{\theta} b^{1-\theta} \le \theta a + (1-\theta)b$), we have
\begin{equation*}
    \begin{split}
        J_1&\leq \frac{\epsilon(1+\epsilon)}{2}\sum_{k\notin A\cup\{i\}}\int_{0}^{t}\E\bigl|\sum_{j\notin A\cup\{i\}}\delta_i m_j^{[A\cup\{i\}]}\bigr|^{2+\epsilon}(s)\,\frac{ds}{n}\\
        &\quad+(1+\epsilon)\sum_{k\notin A\cup\{i\}}\int_{0}^{t} \E\bigl|\eps_i\sum_{j\notin A\cup\{i\}}m_{kj}^{[A\cup\{i\}]}\bigr|^{2+\epsilon}(s)\,\frac{ds}{n}\\
        &\leq \frac{\epsilon(1+\epsilon)}{2}\sum_{k\notin A\cup\{i\}}\int_{0}^{t}\E\bigl|\sum_{j\notin A\cup\{i\}}\delta_i m_j^{[A\cup\{i\}]}\bigr|^{2+\epsilon}(s)\,\frac{ds}{n}\\
        &\quad +(1+\epsilon)2^{1+\epsilon}\sum_{k\notin A\cup\{i\}}\int_{0}^{t} \E\bigl|\eps_i\sum_{j\notin A\cup\{i,k\}}m_{kj}^{[A\cup\{i\}]}\bigr|^{2+\epsilon}(s)\,\frac{ds}{n}\\
        &\quad +(1+\epsilon)2^{1+\epsilon}\sum_{k\notin A\cup\{i\}}\int_{0}^{t} \E\bigl|\eps_i m_{kk}^{[A\cup\{i\}]}\bigr|^{2+\epsilon}(s)\,\frac{ds}{n}.
    \end{split}
\end{equation*}
Using
$0\le \varepsilon_i\, m_{kk}^{[A\cup\{i\}]}\le 1$,
we have,
\begin{equation*}
\sum_{k\notin A\cup\{i\}}\int_0^t \frac{1}{n}\,
\E\Big|\varepsilon_i\, m_{kk}^{[A\cup\{i\}]}\Big|^{2+\epsilon}(s)\,ds
\le \frac{n-|A|-1}{n}\,t\le t.
\end{equation*}
Besides,
\begin{equation*}
    \begin{split}
        \sum_{k\notin A\cup\{i\}}\int_{0}^{t} \E\bigl|\varepsilon_i\sum_{j\notin A\cup\{i, k\}}m_{kj}^{[A\cup\{i\}]}\bigr|^{2+\epsilon}(s)\,\frac{ds}{n}&\leq \sup_{k\notin A\cup\{i\}}\int_{0}^{t} \E\bigl|\varepsilon_i\sum_{j\notin A\cup\{i,k\}}m_{kj}^{[A\cup\{i\}]}\bigr|^{2+\epsilon}(s)\,ds\\
        &\leq  \sup_{k\notin A\cup\{i\}}\int_{0}^{t} \E\varepsilon_i\bigl|\sum_{j\notin A\cup\{i,k\}}m_{kj}^{[A\cup\{i\}]}\bigr|^{2+\epsilon}(s)\,ds\\
        &\leq \sup_{k\notin A\cup\{i\}, \sigma_i=\pm 1}\int_{0}^{t} \E\bigl|\sum_{j\notin A\cup\{i,k\}}m_{kj}^{[A\cup\{i\}]}\bigr|^{2+\epsilon}(s)\,ds.
    \end{split}
\end{equation*}
Combining the above three inequalities, we obtain
\bes{
    J_1\leq& (1+\epsilon)2^{1+\epsilon}t+ \frac{\epsilon(1+\epsilon)}{2}\int_{0}^{t}\E\bigl|\sum_{j\notin A\cup\{i\}}\delta_i m_j^{[A\cup\{i\}]}\bigr|^{2+\epsilon}(s)\, ds \\
    &+(1+\epsilon)2^{1+\epsilon}\sup_{k\notin A\cup\{i\}, \sigma_i=\pm 1}\int_{0}^{t} \E\bigl|\sum_{j\notin A\cup\{i,k\}}m_{kj}^{[A\cup\{i\}]}\bigr|^{2+\epsilon}(s)\,ds.
}
Similarly, 
\begin{equation*}
    \begin{split}
        J_2&\leq(1+\epsilon)\sum_{k\notin A\cup\{i\}}\int_{0}^{t}
\E\bigl|\sum_{j\notin A\cup\{i\}}\delta_i m_j^{[A\cup\{i\}]}\bigr|^{2+\epsilon}(s)\,\frac{ds}{n}\\
&\quad +\sum_{k\notin A\cup\{i\}}\int_{0}^{t}
\bigl|\delta_i\!\left(m_k^{[A\cup\{i\}]}\sum_{j\notin A\cup\{i\}}m_{kj}^{[A\cup\{i\}]}\right)\bigr|^{2+\epsilon}(s)\,\frac{ds}{n}\\
&\leq(1+\epsilon)\int_{0}^{t}
\E\bigl|\sum_{j\notin A\cup\{i\}}\delta_i m_j^{[A\cup\{i\}]}\bigr|^{2+\epsilon}(s)\,ds+2\sum_{k\notin A\cup\{i\}}\int_{0}^{t}
\bigl|\delta_i\!\left(m_k^{[A\cup\{i\}]}m_{kk}^{[A\cup\{i\}]}\right)\bigr|^{2+\epsilon}(s)\,\frac{ds}{n}\\
&\quad +2\sum_{k\notin A\cup\{i\}}\int_{0}^{t}
\bigl|\delta_i\!\left(m_k^{[A\cup\{i\}]}\sum_{j\notin A\cup\{i,k\}}m_{kj}^{[A\cup\{i\}]}\right)\bigr|^{2+\epsilon}(s)\,\frac{ds}{n},\\
&\leq   2t+ (1+\epsilon)\int_{0}^{t}\E\bigl|\sum_{j\notin A\cup\{i\}}\delta_i m_j^{[A\cup\{i\}]}\bigr|^{2+\epsilon}(s)\, ds \\
&\quad+2\sup_{k\notin A\cup\{i\}, \sigma_i=\pm 1}\int_{0}^{t} \E\bigl|m_k^{[A\cup\{i\}]}\bigr|^{2+\epsilon}\bigl|\sum_{j\notin A\cup\{i, k\}}m_{kj}^{[A\cup\{i\}]}\bigr|^{2+\epsilon}(s)\,ds.
    \end{split}
\end{equation*}
Combining the upper bounds for $J_1$ and $J_2$ above, we obtain
\begin{equation}\label{eq:557}
\begin{split}
    J_1+J_2&\le t C_{\epsilon}+(1+\epsilon)(1+\frac{\epsilon}{2})\int_{0}^{t}\E\bigl|\sum_{j\notin A\cup\{i\}}\delta_i m_j^{[A\cup\{i\}]}\bigr|^{2+\epsilon}(s)\, ds\\
    &\quad +(1+\epsilon)2^{1+\epsilon}\sup_{k\notin A\cup\{i\}, \sigma_i=\pm 1}\int_{0}^{t} \E\bigl(1+\bigl|m_k^{[A\cup\{i\}]}\bigr|^{2+\epsilon}\bigr)\bigl|\sum_{j\notin A\cup\{i,k\}}m_{kj}^{[A\cup\{i\}]}\bigr|^{2+\epsilon}(s)\,ds.
    \end{split}
\end{equation}
Inserting \cref{eq:mij}, we have
\begin{equation*}
    \begin{split}
        \bigl(1+\lvert m_k^{[A\cup\{i\}]}\rvert^{2+\epsilon}\bigr)\,
\bigl\lvert \sum_{j\notin A\cup\{i,k\} }m_{kj}^{[A\cup\{i\}]}(\sigma_i)\bigr\rvert^{2+\epsilon}
&\leq \bigl[1-\lvert m_k^{[A\cup\{i\}]}\rvert^{2}\bigr]^{1+\epsilon}
\bigl\lvert \sum_{j\notin A \cup\{i,k\}}\delta_k m_j^{[A\cup\{i,k\}]}(\sigma_i)\bigr\rvert^{2+\epsilon}\\
&\leq \bigl\lvert \sum_{j\notin A\cup\{i,k\} }\delta_k m_j^{[A\cup\{i,k\}]}(\sigma_i)\bigr\rvert^{2+\epsilon}.
    \end{split}
\end{equation*}
Therefore,
\begin{equation*}
    \begin{split}
        \E\bigl|\sum_{j\notin A\cup\{i\}}\delta_i m_j^{[A\cup\{i\}]}\bigr|^{2+\epsilon}(t)&\leq  t C_\epsilon+(1+\epsilon)(1+\frac{\epsilon}{2})\int_{0}^{t}\E\bigl|\sum_{j\notin A\cup\{i\}}\delta_i m_j^{[A\cup\{i\}]}\bigr|^{2+\epsilon}(s)\, ds\\
        &\quad +(1+\epsilon)2^{1+\epsilon}\sup_{k\notin A\cup\{i\}, \sigma_i=\pm 1}\int_{0}^{t} \E\bigl|\delta_k\sum_{j\notin A\cup\{i,k\}}m_{j}^{[A\cup\{i,k\}]}\bigr|^{2+\epsilon}(s)\,ds.
    \end{split}
\end{equation*}
By Gronwall's inequality (cf. \cite[Page~10]{adhikari2021dynamical}), we have
\bes{
&\E\bigl|\sum_{j\notin A\cup\{i\}}\delta_i\,m_j^{[A\cup\{i\}]}\bigr|^{2+\epsilon}(t)\\
\le&
t C_{\epsilon}
+
(1+\epsilon)2^{1+\epsilon}\sup_{k\notin A\cup\{i\},\ \sigma_i=\pm1}
\int_0^t
e^{(1+\epsilon)(1+\epsilon/2)(t-s)}\E\bigl|
\delta_k\sum_{j\notin A\cup\{i,k\}}m_j^{[A\cup\{i,k\}]}
\bigr|^{2+\epsilon}(s)\,ds
.
}
By iterating this estimate $m=n-|A|$ times (cf. \cite[Page~11]{adhikari2021dynamical}), we obtain
\begin{equation}\label{eq:lineart}
    \begin{split}
   &  \E\bigl|\sum_{j\notin A\cup\{i\}}\delta_i\,m_j^{[A\cup\{i\}]}\bigr|^{2+\epsilon}(t)\\
&\le   t C_{\epsilon}\bigl[1+\frac{(1+\epsilon)2^{1+\epsilon}(e^{(1+\epsilon)(1+\epsilon/2)t}-1)}{(1+\epsilon)(1+\epsilon/2)}+\cdots+\frac{(1+\epsilon)^m2^{(1+\epsilon)m}(e^{(1+\epsilon)(1+\epsilon/2)t}-1)^m}{(1+\epsilon)^m(1+\epsilon/2)^m}\bigr].
    \end{split}
\end{equation}
For sufficiently small $t>0$, the above geometric series converge. Therefore, we obtain \cref{eq:two-point2}.
\qed

\

The following proofs are similar to the proof of \cref{two-point}, and we will omit some analogous arguments for brevity.

\paragraph{Proof of Lemma \ref{three-onesum}.}
From \cref{eq:mijk}, we have
\begin{equation}\label{eq-three-onesum}
\sum_{j=1}^n m_{ijk}^{[A]}
=\left[1-\left(m_i^{[A]}\right)^2\right]\bigl(\sum_{j=1}^n\delta_i m_{jk}^{[A\cup\{i\}]}\bigr)
-2\,m_i^{[A]}m_{ik}^{[A]}\,\bigl(\sum_{j=1}^n\delta_i m_j^{[A\cup\{i\}]}\bigr).
\end{equation}
The second term can be bounded using \cref{eq:lemma3.1adhi} and \cref{eq:two-point2} as
\begin{equation*}
    \begin{split}
\bigl\|m_i^{[A]}m_{ik}^{[A]}\,\bigl(\sum_{j=1}^n\delta_i m_j^{[A\cup\{i\}]}\bigr)\bigr\|_{2+\epsilon}\leq \bigl\|m_{ik}^{[A]}\,\bigr\|_{4+2\epsilon}\bigl\|\bigl(\sum_{j=1}^n\delta_i m_j^{[A\cup\{i\}]}\bigr)\bigr\|_{4+2\epsilon}\le \frac{C_{t,\epsilon}}{\sqrt{n}}.
    \end{split}
\end{equation*}
Therefore, it remains to prove
\begin{equation}\label{eq:three-onesum1}
    \E|\sum_{j=1}^n\delta_i m_{jk}^{[A\cup\{i\}]}|^{2+\epsilon}\le \frac{C_{t,\epsilon}}{n^{1+\epsilon/2}}.
\end{equation}
From \cref{eq:SI2}, we have
\begin{equation}\label{eq:sumSI2}
\begin{split}
    \sum_{j=1}^n\delta_i m_{jk}^{[A\cup\{i\}]}&= \sum_{j\notin A\cup\{i\}}\delta_i m_{jk}^{[A\cup\{i\}]}\\
&= \sum_{l\notin A}\int_{0}^{t}\varepsilon_i\, \sum_{j\notin A\cup\{i\}} m_{jkl}^{[A\cup\{i\}]}(s)\, d g_{il}(s) -\sum_{l\notin A}\int_{0}^{t}\delta_i\! \sum_{j\notin A\cup\{i\}}\left(m_{kl}^{[A\cup\{i\}]}m_{jl}^{[A\cup\{i\}]}\right)(s)\,\frac{ds}{n}\\
&\quad -\sum_{l\notin A}\int_{0}^{t}\delta_i\! \sum_{j\notin A\cup\{i\}}\left(m_{l}^{[A\cup\{i\}]}m_{jkl}^{[A\cup\{i\}]}\right)(s)\,\frac{ds}{n}.
\end{split}
\end{equation}
From It\^o\!\!'s lemma, we have
\begin{align*}
&\E|\sum_{j\notin A\cup\{i\}}\delta_i m_{jk}^{[A\cup\{i\}]}|^{2+\epsilon}(t)\\
&\leq \left(1+\frac{\epsilon}{2}\right)(1+\epsilon)\sum_{l\notin A}\int_0^t \E\big|\sum_{j\notin A\cup\{i\}}\delta_i m_{jk}^{[A\cup\{i\}]}\big|^{\epsilon}(s) \big| \varepsilon_i \sum_{j\notin A\cup\{i\}}m^{[A\cup\{i\}]}_{jkl}\big|^2(s)\frac{ds}{n}\\
&\quad
+(2+\epsilon)\sum_{l\notin A}\int_0^t \E\big|\sum_{j\notin A\cup\{i\}}\delta_i m_{jk}^{[A\cup\{i\}]}\big|^{1+\epsilon}(s)\big|\delta_i\!\big(m^{[A\cup\{i\}]}_{kl}  \sum_{j\notin A\cup\{i\}}m^{[A\cup\{i\}]}_{jl}\big)\big|(s)\frac{ds}{n}\\
&\quad
+(2+\epsilon)\sum_{l\notin A}\int_0^t \E\big|\sum_{j\notin A\cup\{i\}}\delta_i m_{jk}^{[A\cup\{i\}]}\big|^{1+\epsilon}(s) \big|\delta_i\!\big(m^{[A\cup\{i\}]}_{l} \sum_{j\notin A\cup\{i\}}m^{[A\cup\{i\}]}_{jkl}\big)\big|(s)\frac{ds}{n}\\
&=:J_1+J_2+J_3.
\end{align*}
As in bounding \cref{eq:J1J2}, but separating the cases $l\ne k$ and $l=k$, we have
\begin{equation*}
    \begin{split}
        J_1&\leq 
        \epsilon(1+\epsilon) \int_0^t \E \big|\sum_{j\notin A\cup \{i\}}\delta_i m_{jk}^{[A\cup \{i\}]} \big|^{2+\ep}(s)ds+(1+\ep)\sum_{l\notin A\cup \{k\}} \int_0^t \E\big|\eps_i\sum_{j\notin A\cup\{i\}} m_{jkl}^{[A\cup \{i\}]} \big|^{2+\ep}(s) \frac{ds}{n}\\
        &\quad +(1+\ep)\int_0^t \E\big|\eps_i\sum_{j\notin A\cup\{i\}} m_{jkk}^{[A\cup \{i\}]} \big|^{2+\ep}(s) \frac{ds}{n^{1+\ep/2}} \\
        &\leq \frac{C_{t,\epsilon}}{n^{1+\ep/2}}+\epsilon(1+\epsilon) \int_0^t \E \big|\sum_{j\notin A\cup \{i\}}\delta_i m_{jk}^{[A\cup \{i\}]} \big|^{2+\ep}(s)ds \\
        &\quad +(1+\ep)\sup_{l\notin A\cup \{k\},\sigma_i=\pm 1}\int_0^t \E\big| \sum_{j\notin A\cup\{i\}}m^{[A\cup\{i\}]}_{jkl}\big|^{2+\eps}(s)ds,
    \end{split}
\end{equation*}
where we used (cf. \cref{eq:mkkj})
\be{
m_{jkk}^{[A\cup \{i\}]}=-2m_k^{[A\cup \{i\}]} m_{kj}^{[A\cup \{i\}]}
}
and \cref{two-point} in the last inequality.
For $J_2$ and $J_3$, similarly, we have
\begin{equation*}
    \begin{split}
        J_2
&\leq \frac{C_{t,\epsilon}}{n^{1+\ep/2}}+2(1+\ep) \int_0^t\E|\sum_{j\notin A\cup\{i\}}\delta_i m_{jk}^{[A\cup\{i\}]}|^{2+\ep}(s)\,ds\\
&\quad+\sup_{l\notin A\cup \{k\},\sigma_i=\pm 1}\int_0^t \E|m^{[A\cup\{i\}]}_{kl}  \sum_{j\notin A\cup\{i\}}m^{[A\cup\{i\}]}_{jl}|^{2+\ep}(s)ds\\
& \leq \frac{C_{t,\epsilon}}{n^{1+\eps/2}}+ \int_0^t\E|\sum_{j\notin A\cup\{i\}}\delta_i m_{jk}^{[A\cup\{i\}]}|^{2+\ep}(s)\,ds
    \end{split}
\end{equation*}
and
\begin{equation*}
    \begin{split}
        J_3
&\leq \frac{C_{t,\epsilon}}{n^{1+\eps/2}}+2(1+\eps)\int_0^t\E|\sum_{j\notin A\cup\{i\}}\delta_i m_{jk}^{[A\cup\{i\}]}|^{2+\eps}(s)\,ds\\
&\quad +\sup_{l\notin A\cup \{k\},\sigma_i=\pm 1}\int_0^t \E|m^{[A\cup\{i\}]}_{l}  \sum_{j\notin A\cup\{i\}}m^{[A\cup\{i\}]}_{jkl}|^{2+\eps}(s) ds.
    \end{split}
\end{equation*}
Combining the bounds on $J_1$--$J_3$, we obtain
\begin{equation*}
\begin{split}
    \E|\sum_{j\notin A\cup\{i\}}\delta_i m_{jk}^{[A\cup\{i\}]}|^{2+\ep}(t)&\le \frac{C_{t,\epsilon}}{n^{1+\eps/2}}+(4+\ep)(1+\ep)\int_0^t\E|\sum_{j\notin A\cup\{i\}}\delta_i m_{jk}^{[A\cup\{i\}]}|^{2+\ep}(s)\,ds\\
    &\quad +\sup_{l\notin A\cup \{k\},\sigma_i=\pm 1}\int_0^t \E\!\big(1+|m^{[A\cup\{i\}]}_{l}|\big)\big| \sum_{j\notin A\cup\{i\}}m^{[A\cup\{i\}]}_{jkl}\big|^{2+\ep}(s)ds\\
    &\le \frac{C_{t,\epsilon}}{n^{1+\eps/2}}+(4+\ep)(1+\ep)\int_0^t\E|\sum_{j\notin A\cup\{i\}}\delta_i m_{jk}^{[A\cup\{i\}]}|^{2+\ep}(s)\,ds\\
    &\quad +\sup_{l\notin A\cup \{k\},\sigma_i=\pm 1}\int_0^t \E\big| \sum_{j\notin A\cup\{i\}}\delta_l m^{[A\cup\{i\}]}_{jk}\big|^{2+\ep}(s)ds.
    \end{split}
\end{equation*}
where we used \cref{eq:mijk}, \cref{eq:lemma3.1adhi} and \cref{eq:two-point1} in the last inequality.
The lemma is proved using the same argument at the end of the proof of \cref{two-point} via Gronwall's inequality and iteration.
\qed

\paragraph{Proof of lemma \ref{three-point}.}
From \cref{eq:mijk}, we have
\begin{equation*}
\sum_{k=1}^n\sum_{j=1}^nm_{ijk}^{[A]}
=\left[1-\left(m_i^{[A]}\right)^2\right]\sum_{k,j=1}^n \delta_i m_{jk}^{[A\cup\{i\}]}
-2\,m_i^{[A]}\bigl(\sum_{k=1}^nm_{ik}^{[A]}\,\bigr)\bigl(\sum_{j=1}^n\delta_i m_j^{[A\cup\{i\}]}\bigr).
\end{equation*}
The second term can be bounded using \cref{two-point} and \cref{eq:two-point1} as
\begin{equation}\label{four moment}
        \bigl\|\bigl(\sum_{k=1}^nm_{ik}^{[A]}\,\bigr)\bigl(\sum_{j=1}^n\delta_i m_j^{[A\cup\{i\}]}\bigr)\bigr\|_{2+\eps}
\le \bigl\|\sum_{k=1}^nm_{ik}^{[A]}\bigr\|_{4+\ep}\,\bigl\|\sum_{j=1}^n\delta_i m_j^{[A\cup\{i\}]}\bigr\|_{4+2\ep}= O(1).
\end{equation}
It remains to control $\sum_{k,j=1}^n\delta_i m_{jk}^{[A\cup\{i\}]}$.
From \cref{eq:SI2}, we have
\begin{equation}\label{eq:sumjkdeltai}
\begin{split}
    \sum_{k,j=1}^n\delta_i m_{jk}^{[A\cup\{i\}]}
&= \sum_{l\notin A}\int_{0}^{t}\varepsilon_i\, \sum_{k,j=1}^n m_{jkl}^{[A\cup\{i\}]}(s)\, d g_{il}(s)\\
&\quad -\sum_{l\notin A}\int_{0}^{t}\delta_i\! \sum_{k,j=1}^n\left(m_{kl}^{[A\cup\{i\}]}m_{jl}^{[A\cup\{i\}]}\right)(s)\,\frac{ds}{n}\\
&\quad -\sum_{l\notin A}\int_{0}^{t}\delta_i\! \sum_{k,j=1}^n\left(m_{l}^{[A\cup\{i\}]}m_{jkl}^{[A\cup\{i\}]}\right)(s)\,\frac{ds}{n}.
\end{split}
\end{equation}
Similarly to bounding \cref{eq:sumSI2}, from It\^o\!\!'s lemma, we have
\begin{align*}
&\E|\sum_{k,j=1}^n\delta_i m_{jk}^{[A\cup\{i\}]}|^{2+\eps}\\
&=(1+\frac{\ep}{2})(1+\ep)\sum_{l\notin A}\int_0^t \E\big|\sum_{k,j=1}^n\delta_i m_{jk}^{[A\cup\{i\}]}\big|^\ep(s)\big| \varepsilon_i  \sum_{k,j=1}^n m^{[A\cup\{i\}]}_{jkl}\big|^2(s)\frac{ds}{n}\\
&\quad
+(2+\ep)\sum_{l\notin A}\int_0^t \E \big|\sum_{k,j=1}^n\delta_i m_{jk}^{[A\cup\{i\}]} \big|^{1+\ep}(s)\big|\delta_i\!\big(\sum_{k=1}^n m^{[A\cup\{i\}]}_{kl}  \sum_{j=1}^n m^{[A\cup\{i\}]}_{jl}\big)\big|(s)\frac{ds}{n}\\
&\quad
+(2+\ep)\sum_{l\notin A}\int_0^t \E \big|\sum_{k,j=1}^n\delta_i m_{jk}^{[A\cup\{i\}]} \big|^{1+\ep}(s)\big|\delta_i\!\big(m^{[A\cup\{i\}]}_{l}   \sum_{k,j=1}^n m^{[A\cup\{i\}]}_{jkl}\big)\big|(s)\frac{ds}{n}\\
&=:J_1+J_2+J_3,
\end{align*}
where, using \cref{two-point} in bounding $J_2$,
\begin{equation*}
    \begin{split}
        J_1&\leq 
        \frac{\epsilon(1+\epsilon)}{2} \int_0^t \E \big|\sum_{k,j=1}^n\delta_i m_{jk}^{[A\cup \{i\}]} \big|^{2+\ep}(s)ds+(1+\ep)\sum_{l\notin A} \int_0^t \E\big|\eps_i\sum_{k,j=1}^n m_{jkl}^{[A\cup \{i\}]} \big|^{2+\ep}(s) \frac{ds}{n}\\
        &\leq 
        \frac{\epsilon(1+\epsilon)}{2} \int_0^t \E \big|\sum_{k,j=1}^n\delta_i m_{jk}^{[A\cup \{i\}]} \big|^{2+\ep}(s)ds+(1+\ep)\sup_{l\notin A ,\sigma_i=\pm 1} \int_0^t \E\big|\sum_{k,j=1}^n m_{jkl}^{[A\cup \{i\}]} \big|^{2+\ep}(s) ds,
    \end{split}
\end{equation*}
\begin{equation*}
    \begin{split}
        J_2
&\leq (1+\ep) \int_0^t\E|\sum_{k,j=1}^n\delta_i m_{jk}^{[A\cup\{i\}]}|^{2+\ep}(s)\,ds+\sup_{l\notin A,\sigma_i=\pm 1}\int_0^t \E |\sum_{k=1}^n m^{[A\cup\{i\}]}_{kl}  \sum_{j=1}^n m^{[A\cup\{i\}]}_{jl}|^{2+\ep}(s)ds\\
& \leq C_{t,\epsilon}+ (1+\ep) \int_0^t\E|\sum_{k,j=1}^n\delta_i m_{jk}^{[A\cup\{i\}]}|^{2+\ep}(s)\,ds
    \end{split}
\end{equation*}
and
\begin{equation*}
    \begin{split}
        J_3
&\leq (1+\eps)\int_0^t\E|\sum_{k,j=1}^n\delta_i m_{jk}^{[A\cup\{i\}]}|^{2+\eps}(s)\,ds +\sup_{l\notin A,\sigma_i=\pm 1}\int_0^t \E|\sum_{k,j=1}^n m^{[A\cup\{i\}]}_{jkl}|^{2+\eps}(s) ds.
    \end{split}
\end{equation*}
Hence,
\begin{equation*}
\begin{split}
    \E|\sum_{k,j=1}^n \delta_i m_{jk}^{[A\cup\{i\}]}|^{2+\ep}&\le C_{t,\ep}+(1+\ep)(2+\frac{\ep}{2})\int_0^t\E|\sum_{k,j=1}^n \delta_i m_{jk}^{[A\cup\{i\}]}|^{2+\ep}(s)\,ds\\
    &\quad +(2+\ep)\sup_{l\notin A,\sigma_i=\pm 1}\int_0^t \E \big| \sum_{k,j=1}^n m^{[A\cup\{i\}]}_{jkl}\big|^2(s)ds.
    \end{split}
\end{equation*}
The remaining proof is the same as the end of the proof of lemma \ref{three-onesum}.
\qed

\paragraph{Proof of lemma \ref{Delta-error}.}
From a similar representation as \cref{eq:SI2}, we have (cf. \cref{eq:sumSI2})
\begin{equation*}
\begin{split}
    &\sum_{j=1}^n\Delta_i m_{jk}^{[A\cup\{i\}]}= \sum_{j\notin A\cup\{i\}}\Delta_i m_{jk}^{[A\cup\{i\}]}\\
&= \sum_{l\notin A}\int_{0}^{t}\delta_i\, \sum_{j\notin A\cup\{i\}} m_{jkl}^{[A\cup\{i\}]}(s)\, d g_{il}(s) -\sum_{l\notin A}\int_{0}^{t}\eps_i\! \sum_{j\notin A\cup\{i\}}\left(m_{kl}^{[A\cup\{i\}]}m_{jl}^{[A\cup\{i\}]}\right)(s)\,\frac{ds}{n}\\
&\quad -\sum_{l\notin A}\int_{0}^{t}\eps_i\! \sum_{j\notin A\cup\{i\}}\left(m_{l}^{[A\cup\{i\}]}m_{jkl}^{[A\cup\{i\}]}\right)(s)\,\frac{ds}{n}.
\end{split}
\end{equation*}
The remaining proof is similar to the proof of \cref{three-onesum} and hence omitted.
\qed

\paragraph{Proof of \cref{Delta-error-higher-order}.}
Assume $i\notin A$. Otherwise, the lemma is trivial.
By the identity (cf. \cref{error})
\[
m_{jk}^{(A)} - m_{jk}^{(A \cup \{i\})}
=
\left( \delta_i m_j^{[i,A]} \right) m_{ik}^{(A)}
+ \Delta_i m_{jk}^{[i,A]}
+ m_i^{(A)} \left( \delta_i m_{jk}^{[i,A]} \right),
\]
we have
\begin{align*}
    \sum_{l=1}^n(m_{lj}^{(A)}-m_{lj}^{(A\cup\{i\})})=(\delta_i\sum_{l=1}^nm_l^{[i,A]})m_{ij}^{(A)}+\Delta_i\sum_{l=1}^nm_{lj}^{[i,A]}+m_i^{(A)}(\delta_i\sum_{l=1}^nm_{lj}^{[i,A]}).
\end{align*}
Then, from \cref{eq:two-point2}, \cref{eq:lemma3.1adhi}, \cref{Delta-error} and \cref{eq:three-onesum1} (it can be seen from their proofs that they hold even 
if the superscript $[A]$ is replaced by $[A,B]$, that is, all the spins in $B$ are removed), we obtain
\begin{align*}
    \|(a_{j}^{(A)}-a_{j}^{(A\cup \{i\})})(t)\|_{2+\ep}\le&
    \|(\delta_i\sum_{l=1}^nm_l^{[i,A]})m_{ij}^{(A)}\|_{2+\ep}+
    \|\Delta_i\sum_{l=1}^nm_{lj}^{[i,A]}\|_{2+\ep}+
    \|(\delta_i\sum_{l=1}^nm_{lj}^{[i,A]})\|_{2+\ep}\\
    \le& \frac{C_{t,\ep}}{\sqrt{n}}.
\end{align*}
\qed

\paragraph{Proof of lemma \ref{two-point-coef}.}
Recall the definition of $a_j$ from \cref{Delta-error-higher-order}.
    Let (recall the fact that $a^{(A\cup \{i\})}_j=0$ if $j\in A\cup \{i\}$)
    \be{
    X_t:=\sum_{j=1}^na_j^{(A\cup\{i\})}\delta_im_j^{[A\cup\{i\}]}(t)=\sum_{j\notin A\cup \{i\}} a_j^{(A\cup\{i\})}\delta_im_j^{[A\cup\{i\}]}(t).
    } 
    From \cref{eq:SI1}, we have
    \begin{align*}
        X_t= &\sum_{k\notin A}\int_{0}^{t}\sum_{j\notin A\cup \{i\}} a_j^{(A\cup\{i\})}\varepsilon_i\, m_{kj}^{[A\cup\{i\}]}(s)\, d g_{ik}(s)\\
&- \sum_{k\notin A}\int_{0}^{t}\sum_{j\notin A\cup \{i\}} a_j^{(A\cup\{i\})}\delta_i\!\left( m_k^{[A\cup\{i\}]} m_{kj}^{[A\cup\{i\}]} \right)(s)\,\frac{ds}{n}.
    \end{align*}
Similar to \cref{eq:J1J2}, from It\^o\!\!'s lemma, we have
\begin{align*}
    \E|X_t|^{2+\ep}\le&(1+\frac{\ep}{2})(1+\ep)\sum_{k\notin A\cup \{i\}}\int_0^t\E|X_s|^{\ep}|\sum_{j\notin A\cup \{i\}} a_j^{(A\cup\{i\})}\varepsilon_i\, m_{kj}^{[A\cup\{i\}]}|^2(s)\frac{ds}{n}\\
    &+(2+\ep)\sum_{k\notin A\cup\{i\}}\int_0^t\E|X_s|^{1+\ep}|\sum_{j\notin A\cup \{i\}} a_j^{(A\cup\{i\})}\delta_i\!\left( m_k^{[A\cup\{i\}]} m_{kj}^{[A\cup\{i\}]} \right)|(s)\frac{ds}{n}\\
    =:&J_{1}+J_{2}.
\end{align*}
From Young's inequality and \cref{eq:mij}, we have
\begin{align*}
    J_1
    \le&\frac{\ep(1+\ep)}{2}\int_{0}^t\E|X_s|^{2+\ep}ds
    +(1+\ep)\sum_{k\notin A\cup \{i\}}\int_{0}^t\E|\sum_{j\notin A\cup \{i\}} a_j^{(A\cup\{i\})}\varepsilon_i\, m_{kj}^{[A\cup\{i\}]}|^{2+\ep}(s)\frac{ds}{n}\\
    \le&\frac{\ep(1+\ep)}{2}\int_{0}^t\E|X_s|^{2+\ep}ds
    +(1+\ep)\sum_{k\notin A\cup \{i\}}\int_{0}^t\E|\sum_{j\notin A\cup \{i\}} a_j^{(A\cup\{i\})}\delta_k\, m_{j}^{[A\cup\{i,k\}]}|^{2+\ep}(s)\frac{ds}{n}\\
    \le&\frac{\ep(1+\ep)}{2}\int_{0}^t\E|X_s|^{2+\ep}ds
    +
    (1+\ep)2^{1+\ep}\sup_{\substack{k\notin A\cup \{i\}\\\sigma_i=\pm1}}\int_0^t\|\sum_{j\notin A\cup \{i\}} a_j^{(A\cup\{i,k\})}\delta_k m_j^{[A\cup \{i,k\}]}(s)\|_{2+\ep}^{2+\ep}ds\\
    &+
    (1+\ep)2^{1+\ep}\sup_{\substack{k\notin A\cup \{i\}\\\sigma_i=\pm1}}\int_0^t\|\sum_{j\notin A\cup \{i\}}(a_j^{(A\cup\{i\})}-a_j^{(A\cup\{i,k\})})\delta_k m_j^{[A\cup \{i,k\}]}(s)\|_{2+\ep}^{2+\ep}ds.
\end{align*}
For the third term above, we have, from \cref{Delta-error-higher-order} and \cref{eq:lemma3.1adhi},
\begin{align*}
    &\int_0^t\|\sum_{j\notin A\cup \{i,k\}}(a_j^{(A\cup\{i\})}-a_j^{(A\cup\{i,k\})})\delta_k m_j^{[A\cup \{i,k\}]}(s)\|_{2+\ep}^{2+\ep}ds\\
    \le
    &\int_0^t(\sum_{j\notin A\cup \{i,k\}}\|(a_j^{(A\cup\{i\})}-a_j^{(A\cup\{i,k\})})\|_{4+2\ep}\|\delta_k m_j^{[A\cup \{i,k\}]}(s)\|_{4+2\ep})^{2+\ep}ds\\
    \le&
    \int_0^t(\sum_{j\notin A\cup\{i,k\}}\frac{C_{t,\ep}}{n})ds\le C_{t,\ep}.
\end{align*}
The contribution from the case $j=k$ is also of order $O(1)$.
Therefore,
\begin{align*}
    J_1\le& C_{t,\ep}+\frac{\ep(1+\ep)}{2}\int_{0}^t\E|X_s|^{2+\ep}ds\\
    &
    +
    (1+\ep)2^{1+\ep}\sup_{\substack{k\notin A\cup \{i\}\\\sigma_i=\pm1}}\int_0^t\|\sum_{j\notin A\cup\{i,k\}}a_j^{(A\cup\{i,k\})}\delta_km_j^{[A\cup\{i,k\}]}(s)\|_{2+\ep}^{2+\ep}ds.
\end{align*}
For $J_2$, from Young's inequality and \cref{eq:mij}, we have
\begin{align*}
    J_2
    \le&(1+\ep)\int_0^t\E|X_s|^{2+\ep}ds
    +
    \sum_{k\notin A\cup\{i\}}\int_0^t\E|\sum_{j\notin A\cup \{i\}}a_j^{(A\cup\{i\})}\delta_i\!\left( m_k^{[A\cup\{i\}]} m_{kj}^{[A\cup\{i\}]} \right)|^{2+\ep}(s)\frac{ds}{n}\\
    \le&(1+\ep)\int_0^t\E|X_s|^{2+\ep}ds
    +
    \sum_{k\notin A\cup\{i\}}\int_0^t\E|\sum_{j\notin A\cup \{i\}}a_j^{(A\cup\{i\})}\delta_k m_{j}^{[A\cup\{i,k\}]} |^{2+\ep}(s)\frac{ds}{n}\\
    \le&(1+\ep)\int_0^t\E|X_s|^{2+\ep}ds
    +
   2^{1+\ep}\sup_{\substack{k\notin A\cup\{i\}\\\sigma_{i}=\pm1}}\int_0^t\E|\sum_{j\notin A\cup \{i\}}a_j^{(A\cup\{i,k\})}\delta_km_{j}^{[A\cup\{i,k\}]}|^{2+\ep}(s)ds\\
   &+
   2^{1+\ep}\sup_{\substack{k\notin A\cup\{i\}\\\sigma_i=\pm1}}\int_0^t\|\sum_{j\notin A\cup\{i\}}(a_j^{(A\cup\{i\})}-a_j^{(A\cup\{i,k\})})\delta_km_j^{[A\cup\{i,k\}]}(s)\|_{2+\ep}^{2+\ep}ds\\
   \le&C_{t,\ep}+(1+\ep)\int_0^t\E|X_s|^{2+\ep}ds
    +
   2^{1+\ep}\sup_{\substack{k\notin A\cup\{i\}\\\sigma_{i}=\pm1}}\int_0^t\E|\sum_{j\notin A\cup \{i,k\}}a_j^{(A\cup\{i,k\})}\delta_km_{j}^{[A\cup\{i,k\}]}|^{2+\ep}(s)ds,
\end{align*}
where the last inequality comes from the same argument as for $J_1$.
Combining the bounds on $J_1,J_2$, we obtain
\bes{
\E|X_t|^{2+\ep}\le & C_{t,\ep}+(1+\frac{\ep}{2})(1+\ep)\int_0^t\E|X_s|^{2+\ep}ds\\
&+(2+\ep)2^{1+\ep}\sup_{\substack{k\notin A\cup\{i\}\\\sigma_i=\pm 1}}\int_0^t\E|\sum_{j\notin A\cup\{i,k\}}a_j^{(A\cup\{i,k\})}\delta_km_{j}^{[A\cup\{i,k\}]}|^{2+\ep}(s)ds.
}
The lemma is proved using the same argument at the end of the proof of \cref{two-point} via Gronwall's inequality and iteration.
\qed

\paragraph{Proof of \cref{lemma-coeffi-A}.}
Recall the definition of $a_j$ from \cref{Delta-error-higher-order}.
From \cref{eq:sumjkdeltai}, we have
\begin{align*}
     X_t:=&\sum_{j=1}^{n} \left[ \sum_{l=1}^{n} m_{jl}^{(A\cup\{i\})} \right]\delta_i\left[\sum_{k=1}^{n}m_{jk}^{[A\cup\{i\}]}\right](t)
     =\sum_{j=1}^{n}\sum_{k=1}^{n}a_{j}^{(A\cup\{i\})}\delta_i m_{jk}^{[A\cup\{i\}]}\\
      =&\sum_{l\notin A}\int_0^t\varepsilon_i\sum_{j=1}^{n}\sum_{k=1}^{n}a_{j}^{(A\cup\{i\})}m_{jkl}^{[A\cup\{i\}]}(s)dg_{il}(s)-\sum_{l\notin A}\int_0^t\delta_i\sum_{j=1}^{n}\sum_{k=1}^{n}a_{j}^{(A\cup\{i\})}(m_{kl}^{[A\cup\{i\}]}m_{jl}^{A\cup\{i\}})(s)\frac{ds}{n}\\
      &-\sum_{l\notin A}\int_0^t\delta_i\sum_{j=1}^{n}\sum_{k=1}^{n}a_{j}^{(A\cup\{i\})}(m_{l}^{[A\cup\{i\}]}m_{jkl}^{A\cup\{i\}})(s)\frac{ds}{n}.
\end{align*}
Similarly to bounding \cref{eq:sumjkdeltai}, from It\^o\!\!'s lemma, we have
\begin{align*}
    \E X_t^2\le&\sum_{l\notin A}\int_0^t\E|\varepsilon_i\sum_{j=1}^{n}\sum_{k=1}^{n}a_{j}^{(A\cup\{i\})}m_{jkl}^{[A\cup\{i\}]}|^2(s) \frac{ds}{n}\\
    &+2\sum_{l\notin A}\int_0^t\E |X_s||\delta_i\sum_{j=1}^{n}\sum_{k=1}^{n}a_{j}^{(A\cup\{i\})}(m_{kl}^{[A\cup\{i\}]}m_{jl}^{A\cup\{i\}})|(s)\frac{ds}{n}\\
    &+2\sum_{l\notin A}\int_0^t\E|X_s||\delta_i\sum_{j=1}^{n}\sum_{k=1}^{n}a_{j}^{(A\cup\{i\})}(m_{l}^{[A\cup\{i\}]}m_{jkl}^{A\cup\{i\}})|(s)\frac{ds}{n}\\
    =:&J_1+J_2+J_3.
\end{align*}
For $J_1$, using \cref{eq:mijk} in the second line below, we obtain
\begin{align*}
    J_1
    \le&\sup_{\substack{l\notin A\cup \{i\}\\\sigma_i=\pm1}}\int_0^t\E|\sum_{j=1}^n\sum_{k=1}^na_j^{(A\cup\{i\})}m_{jkl}^{[A\cup\{i\}]}|^2(s)ds\\
    \le&\sup_{\substack{l\notin  A\cup\{i\}\\\sigma_i=\pm1}}4\int_0^t\E|\sum_{j=1}^n\sum_{k=1}^na_j^{A\cup\{i, l\}}\delta_lm_{jk}^{[A\cup\{i,l\}]}|^2(s)ds\\
    &+\sup_{\substack{l\notin  A\cup\{i\}\\\sigma_i=\pm1}}16\int_0^t\E|(\sum_{k=1}^nm_{kl}^{[A\cup\{i\}]})(\sum_{j=1}^na_j^{(A\cup \{i,l\})}\delta_lm_j^{[A\cup \{i,l\}]})|^2(s)ds\\
    &+\sup_{\substack{l\notin  A\cup \{i\}\\\sigma_i=\pm1}}2\int_0^t\E|\sum_{j=1}^n\sum_{k=1}^n(a_j^{(A\cup \{i\})}-a_j^{(A\cup \{i,l\})})m_{jkl}^{[A\cup \{i\}]}|^2(s)ds\\
    \le&\sup_{\substack{l\notin  A\cup \{i\}\\\sigma_i=\pm1}}4\int_0^t\E|\sum_{j=1}^n\sum_{k=1}^na_j^{(A\cup \{i,l\})}\delta_lm_{jk}^{[A\cup \{i,l\}]}|^2(s)ds\\
    &+\sup_{\substack{l\notin  A\cup \{i\}\\\sigma_i=\pm1}}16\int_0^t\|\sum_{k=1}^nm_{kl}^{[A\cup \{i\}]}\|_4^2\|\sum_{j=1}^na_j^{(A\cup \{i,l\})}\delta_lm_j^{[A\cup \{i,l\}]}\|_4^2(s)ds+C_t\\
    \le&C_t+\sup_{\substack{l\notin  A\cup \{i\}\\\sigma_i=\pm1}}4\int_0^t\E|\sum_{j=1}^n\sum_{k=1}^na_j^{(A\cup \{i,l\})}\delta_lm_{jk}^{[A\cup \{i,l\}]}|^2(s)ds,
\end{align*}
where the last inequality is from \cref{two-point} and \cref{two-point-coef} (with $\ep=2$) and the second to last inequality is because
\begin{align*}
    &\E|\sum_{j=1}^n\sum_{k=1}^n(a_j^{(A\cup \{i\})}-a_j^{(A\cup \{i,l\})})m_{jkl}^{[A\cup \{i\}]}|^2(s)\\
    \le&\sum_{j_1,j_2}\E\Bigg\{\big|(a_{j_1}^{(A\cup \{i\})}-a_{j_1}^{(A\cup \{i, l\})})(\sum_km_{j_1kl}^{[A\cup \{i,l\}]})\big]\cdot\big[(a_{j_2}^{(A\cup \{i\})}-a_{j_2}^{(A\cup \{i, l\})})(\sum_km_{j_2kl}^{[A\cup \{i,l\}]})\big|\Bigg\}(s)\\
    \le&\sum_{j_1,j_2}\|(a_{j_1}^{(A\cup \{i\})}-a_{j_1}^{(A\cup\{i,k\})})(s)\|_4\|(a_{j_2}^{(A\cup \{i\})}-a_{j_2}^{(A\cup\{i,k\})})(s)\|_4\|\sum_km_{j_1kl}^{[A\cup \{i,l\}]}(s)\|_4\|\sum_km_{j_2kl}^{[A\cup \{i,l\}]}(s)\|_4\\
    \le&C_t. \quad \text{[by \cref{Delta-error-higher-order,three-onesum,two-point}]}
\end{align*}
For $J_2$, we have
\begin{align*}
    J_2
    \le&\int_0^t\E X_s^2ds+\sum_{l\notin A}\int_0^t\E(\delta_i\sum_{j=1}^n\sum_{k=1}^na_j^{(A\cup \{i\})}(m_{kl}^{[A\cup \{i\}]}m_{jl}^{[A\cup \{i\}]}))^2(s)\frac{ds}{n}\\
    \le&\int_0^t\E X_s^2ds+\sum_{l\notin A}\int_0^t\E(\delta_i\sum_{j=1}^n\sum_{k=1}^na_j^{(A\cup \{i\})}(m_{kl}^{[A\cup \{i\}]}\delta_l m_{j}^{[A\cup \{i,l\}]}))^2(s)\frac{ds}{n}\\
    \le&\int_0^t\E X_s^2ds+2\sup_{\substack{l\notin A\\\sigma_i=\pm1}}\int_0^t\|\sum_{k=1}^nm_{kl}^{[A\cup \{i\}]}(s)\|_4^2\|\sum_{j=1}^na_j^{(A\cup \{i,  l\})}\delta_lm_{j}^{[A\cup \{i,l\}]}(s)\|_4^2ds\\
    &+2\sup_{\substack{l\notin A\\\sigma_i=\pm1}}\int_0^t\E(\sum_{k=1}^nm_{kl}^{[A\cup \{i\}]}\sum_{j=1}^n(a_j^{(A\cup \{i\})}-a_j^{(A\cup \{i,l\})})\delta_lm_j^{[A\cup \{i,l\}]})^2(s)ds\\
    \le&\int_0^t\E X_s^2ds+C_t,
\end{align*}
where in the last inequality, we used \cref{two-point,two-point-coef} and
\begin{align*}
   &\int_0^t\E(\sum_{k=1}^nm_{kl}^{[A\cup \{i\}]}\sum_{j=1}^n(a_j^{(A\cup \{i\})}-a_j^{(A\cup \{i,l\})})\delta_lm_j^{[A\cup \{i,l\}]})^2(s)ds\\
    \le&\int_0^t\Big(\sum_{j\notin A}\|\sum_{k=1}^nm_{kl}^{[A\cup \{i\}]}(a_j^{(A\cup \{i\})}-a_j^{(A\cup \{i,l\})})\delta_{l}m_j^{[A\cup \{i,l\}]}(s)\|_2\Big)^2ds\\
    \le&\int_0^t\Big(\sum_{j\notin A}\|\sum_{k=1}^nm_{kl}^{[A\cup \{i\}]}(s)\|_8\|\delta_lm_j^{[A\cup \{i,l\}]}(s)\|_8\|a_j^{(A\cup \{i\})}-a_j^{(A\cup \{i,  l\})}(s)\|_4\Big)^2ds
    \le C_t.
\end{align*}
Similarly to bounding $J_1$,  we have
\begin{align*}
    J_3\le&\int_0^t\E X_s^2ds+\sup_{\substack{l\notin A\\\sigma_i=\pm1}}\int_0^t\E(\sum_{j=1}^n\sum_{k=1}^na_j^{(A\cup \{i\})}m_{jkl}^{[A\cup \{i\}]})^2(s)ds\\
    \le&C_t+\int_0^t\E X_s^2ds+
    \sup_{\substack{l\notin  A\cup \{i\}\\\sigma_i=\pm1}}4\int_0^t\E|\sum_{j=1}^n\sum_{k=1}^na_j^{(A\cup \{i,l\})}\delta_lm_{jk}^{[A\cup \{i,l\}]}|^2(s)ds.
\end{align*}

Combining the bounds on $J_1$--$J_3$, we obtain
\begin{equation*}
    \E X_t^2\le C_t+2\int_0^t\E X_s^2ds+8
    \sup_{\substack{l\notin  A\cup \{i\}\\\sigma_i=\pm1}}\int_0^t\E|\sum_{j=1}^n\sum_{k=1}^na_j^{(A\cup \{i,l\})}\delta_lm_{jk}^{[A\cup \{i,l\}]}|^2(s)ds.
\end{equation*}
The lemma is proved using the same argument at the end of the proof of \cref{two-point} using Gronwall's inequality and iteration.
\qed


\section*{Acknowledgements}

Fang X. thanks Yuta Koike for teaching him  the martingale embedding via F\"ollmer process by \cite{eldan2020clt}.
Fang X. was partially supported by Hong Kong RGC GRF 14304822, 14303423, 14302124 and a CUHK direct grant.

\bibliographystyle{apalike}
\bibliography{reference}









\end{document}